\documentclass[12pt,a4paper,fullpage]{amsart}
\usepackage{amsmath,amsthm}
\usepackage{amssymb}
\usepackage{amsbsy,amsfonts,latexsym,
                        amsopn,amstext,amsxtra,euscript,amscd,mathrsfs,color,bm,cite}
\usepackage{todonotes}
\newtheorem{theorem}{Theorem}[section]
\newtheorem*{theorem*}{Theorem}
\newtheorem{proposition}[theorem]{Proposition}
\newtheorem{lemma}[theorem]{Lemma}
\newtheorem{Remark}[theorem]{Remarks}
\newtheorem{corollary}[theorem]{Corollary}
\newtheorem{Conjecture}[theorem]{Conjecture}
\makeatletter
\DeclareRobustCommand*\cal{\@fontswitch\relax\mathcal}

\pagestyle{myheadings}
\markboth{}{Bounds on relative class numbers}

\title[Mean square values of non primitive $L$- functions]{{\bf Mean square values of $L$-functions over subgroups\\ 
for non primitive characters, Dedekind sums \\
and bounds on relative class numbers}}
\author{
St\'ephane R. LOUBOUTIN}
\address{Aix Marseille Universit\'e, CNRS, Centrale Marseille, I2M,\
Marseille, FRANCE}
\email{stephane.louboutin@univ-amu.fr}
\author{Marc MUNSCH}
\address{Unige, Dipartimento di Matematica, 
Genova, Italia }
\email{munsch@dima.unige.it}
\date{\today}

\textwidth 17cm
\textheight 25cm
\oddsidemargin -0.5cm
\evensidemargin -0.5cm
\topmargin -0.5cm

\begin{document}
\bibliographystyle{alpha}
\begin{abstract} An explicit formula for the mean value of $\vert L(1,\chi)\vert^2$ is known, 
where $\chi$ runs over all odd primitive Dirichlet characters of prime conductors $p$. 
Bounds on the relative class number of the cyclotomic field ${\mathbb Q}(\zeta_p)$ follow.
Lately the authors obtained that the mean value of $\vert L(1,\chi)\vert^2$ is asymptotic to $\pi^2/6$, 
where $\chi$ runs over all odd primitive Dirichlet characters of prime conductors $p\equiv 1\pmod{2d}$
which are trivial on a subgroup $H$ of odd order $d$ of the multiplicative group $({\mathbb Z}/p{\mathbb Z})^*$, 
provided that $d\ll\frac{\log p}{\log\log p}$.
Bounds on the relative class number of the subfield of degree $\frac{p-1}{2d}$ 
of the cyclotomic field ${\mathbb Q}(\zeta_p)$ 
follow. 
Here, for a given integer $d_0>1$ 
we consider the same questions for the non-primitive odd Dirichlet characters $\chi'$ modulo $d_0p$ 
induced by the odd primitive characters $\chi$ modulo $p$. 
We obtain new estimates for Dedekind sums 
and deduce that the mean value of $\vert L(1,\chi')\vert^2$ is asymptotic to 
$\frac{\pi^2}{6}\prod_{q\mid d_0}\left (1-\frac{1}{q^2}\right )$, 
where $\chi$ runs over all odd primitive Dirichlet characters of prime conductors $p$
which are trivial on a subgroup $H$ of odd order $d\ll\frac{\log p}{\log\log p}$.
As a consequence we improve the previous bounds 
on the relative class number of the subfield of degree $\frac{p-1}{2d}$ of the cyclotomic field ${\mathbb Q}(\zeta_p)$. 
Moreover, we give a method to obtain explicit formulas 
and use Mersenne primes to show that our restriction on $d$ is essentially sharp. \end{abstract}

\maketitle

\footnotetext{
2010 Mathematics Subject Classification. 
Primary. 11F20. 11R42. 11M20, 11R20, 11R29. 11J71.

Key words and phrases. 
Dirichlet character. 
$L$-function. 
Mean square value. 
Relative class number. 
Dedekind sums. 
Cyclotomic field.}

\section{Introduction}
Let $X_f$ be the multiplicative group of the $\phi (f)$ Dirichlet characters modulo $f>2$. 
Let 
$X_f^-
=\{\chi\in X_f;\ \chi (-1)=-1\}$ be the set 
of the $\phi (f)/2$ odd Dirichlet characters modulo $f$. 
Let $L(s,\chi)$ 
be the Dirichlet $L$-function associated with $\chi\in X_f$. 
Let $H$ denote a subgroup of index $m$ in the multiplicative group $G:=({\mathbb Z}/f{\mathbb Z})^*$. 
We assume that $-1\not\in H$. 
Hence $m$ is even.
We set 
$X_f(H)
=\{\chi\in X_f;\ \chi_{/H}=1\}$, 
a subgroup of order $m$ of $X_f$ isomorphic to the group of Dirichlet characters of the abelian quotient group $G/H$ 
of order $m$. 
Define $X_f^-(H)=\{\chi\in X_f^-;\ \chi_{/H}=1\}$, 
a set of cardinal $m/2$.
Let $K$ be an abelian number field of degree $m$ and prime conductor $p\geq 3$, 
i.e. let $K$ be a subfield of the cyclotomic number field ${\mathbb Q}(\zeta_p)$ 
(Kronecker-Weber's theorem). 
The Galois group ${\rm Gal}({\mathbb Q}(\zeta_p)/{\mathbb Q})$ 
is canonically isomorphic to the multiplicative cyclic group $({\mathbb Z}/p{\mathbb Z})^*$ 
and
$H
:={\rm Gal}({\mathbb Q}(\zeta_p)/K)$
is a subgroup of $({\mathbb Z}/p{\mathbb Z})^*$ of index $m$ and order $$d=(p-1)/m.$$ 
Now, assume that $K$ is imaginary. 
Then $d$ is odd, 
$m$ is even,
$-1\not\in H$ 
and the set 
$$X_K^-
:=X_p^-(H)
:=\{\chi\in X_p^-;\hbox{ and } \chi_{/H}=1\}$$
is of cardinal $(p-1)/(2d) =m/2$.
Let $K^+$ be the maximal real subfield of $K$ of degree $m/2$ fixed by the complex conjugation. 
The class number $h_{K^+}$ of $K^+$ divides the class number $h_K$ of $K$. 
The relative class number of $K$ is defined by $h_K^- =h_K/h_{K^+}$. 
We refer the reader to \cite{Ser} and \cite{Was} for such basic knowledge. 
The mean square value of $L(1,\chi)$ as $\chi$ ranges in $X_f^-(H)$ is defined by
\begin{equation}\label{M(f,H)}
M(f,H)
:={1\over\# X_f^-(H)}\sum_{\chi\in X_f^-(H)}\vert L(1,\chi)\vert^2.
\end{equation}
The analytic class number formula and the arithmetic-geometric mean inequality give 
\begin{equation}\label{boundhKminus}
h_K^-
=w_K\left (\frac{p}{4\pi^2}\right )^{m/4}\prod_{\chi\in X_K^-} L(1,\chi)
\leq w_K\left ({pM(p,H)\over 4\pi^2}\right )^{m/4},
\end{equation}
where $w_K$ is the number of complex roots of unity in $K$. 
Hence $w_K=2p$ for $K ={\mathbb Q}(\zeta_p)$ and $w_K=2$ otherwise. 
In \cite[Theorem 1.1]{LMQJM} we proved that 
\begin{equation}\label{asymptoticMpH}
M(p,H) 
=\frac{\pi^2}{6}+o(1)
\end{equation}
as $p$ tends to infinity uniformly over subgroups $H$ of $({\mathbb Z}/p{\mathbb Z})^*$ 
of odd order $d\leq\frac{\log p}{3(\log\log p)}$ \footnote{This restriction on $d$ is probably optimal, by (\ref{Mpd}).}. 
Hence, by \eqref{boundhKminus} we have 
\begin{equation}\label{boundhKgen} 
h_{K}^{-} 
\leq w_K\left(\frac{(1+o(1))p}{24}\right)^{(p-1)/4d}.
\end{equation} 
In some situations it is even possible to give an explicit formula for $M(p,H)$ 
implying a completely explicit bound for $h_K^-$. 
Indeed, by \cite{W} and \cite{Met} (see also (\ref{M(f,1)})), 
we have
\begin{equation}\label{Mp1}
M(p,\{1\})
={\pi^2\over 6}\left (1-{1\over p}\right )\left (1-{2\over p}\right )
\leq\frac{\pi^2}{6}
\ \ \ \ \ \hbox{($p\geq 3$)}.
\end{equation}
Hence, 
\begin{equation}\label{boundhpminus24}
h_{{\mathbb Q}(\zeta_p)}^-
\leq 2p\left ({pM(p,\{1\})\over 4\pi^2}\right )^{(p-1)/4}
\leq 2p\left ({p\over 24}\right )^{(p-1)/4}.
\end{equation} 
We refer the reader to \cite{Gra} for more information about the expected size of $h_{{\mathbb Q}(\zeta_p)}^-$. 
The only other situation where a similar explicit result is known is the following one
(see Theorem \ref{thp3M2} for a new proof).

\begin{theorem*}
(See \footnote{Note the misprint in the exponent in \cite[(8)]{LouBPASM64}.} \cite[Theorem 1]{LouBPASM64}).
Let $p\equiv 1\pmod 6$ be a prime integer. 
Let $K$ be the imaginary subfield of degree $(p-1)/3$ of the cyclotomic number field ${\mathbb Q}(\zeta_p)$.
Let $H$ be the subgroup of order $3$ of the multiplicative group $({\mathbb Z}/p{\mathbb Z})^*$. 
We have (compare with (\ref{Mp1}) and (\ref{boundhpminus24}))
\begin{equation}\label{boundthp3}
M(p,H)
={\pi^2\over 6}\left (1-{1\over p}\right )
\leq\frac{\pi^2}{6}
\hbox{ and }
h_K^-\leq 2\left (\frac{p}{24}\right )^{(p-1)/12}.\end{equation}
\end{theorem*}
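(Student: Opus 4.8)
The plan is to start from the standard closed form for $|L(1,\chi)|^2$ attached to an odd character and then exploit orthogonality over the quotient group $G/H$. For an odd (hence primitive, since $p$ is prime) character $\chi$ modulo $p$ one has $L(1,\chi)=\frac{-i\pi}{p\,\tau(\bar\chi)}\sum_{a=1}^{p-1}\bar\chi(a)\,a$, and since $|\tau(\bar\chi)|^2=p$ this gives $|L(1,\chi)|^2=\frac{\pi^2}{p^3}\bigl|\sum_{a=1}^{p-1}\chi(a)\,a\bigr|^2$. Summing over $\chi\in X_p^-(H)$, expanding the square, and picking out the odd characters via the projector $\tfrac12(1-\chi(-1))$, I would use the orthogonality relation $\sum_{\chi\in X_p(H)}\chi(c)=m\cdot\mathbf 1[c\in H]$ together with $-1\notin H$. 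Writing $[x]$ for the representative of $x$ in $\{1,\dots,p-1\}$ and $D(h):=\sum_{b=1}^{p-1}b\,[hb]$, this collapses the mean value to
\begin{equation*}
M(p,H)=\frac{\pi^2}{p^3}\sum_{h\in H}\bigl(D(h)-D(-h)\bigr).
\end{equation*}

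The second step is to recognise the $D(h)$ as affine shifts of Dedekind sums. A direct expansion of $s(h,p)=\sum_{b=1}^{p-1}((b/p))(([hb]/p))$ gives $D(h)=p^2 s(h,p)+\tfrac14 p^2(p-1)$, and the antisymmetry $s(-h,p)=-s(h,p)$ makes the constant terms cancel in $D(h)-D(-h)=2p^2 s(h,p)$. Hence $M(p,H)=\frac{2\pi^2}{p}\sum_{h\in H}s(h,p)$. Writing $H=\{1,g,g^2\}$ with $g$ of order $3$ and using $s(g^{-1},p)=s(g,p)$ (change of variable $b\mapsto gb$) together with $g^2=g^{-1}$, this is $\frac{2\pi^2}{p}\bigl(s(1,p)+2s(g,p)\bigr)$, where $s(1,p)=\frac{(p-1)(p-2)}{12p}$ is elementary. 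Everything now rests on a single Dedekind-sum evaluation: I must show $s(g,p)=\frac{p-1}{12p}$, equivalently $\sum_{h\in H}s(h,p)=\frac{p-1}{12}$.

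This evaluation is the heart of the argument, and the decisive arithmetic input is $1+g+g^2\equiv 0\pmod p$. For each $b$ the three residues $b,[gb],[g^2b]$ are the elements of the coset $bH$ and satisfy $b+gb+g^2b\equiv 0\pmod p$; since each lies in $\{1,\dots,p-1\}$ their sum lies strictly between $0$ and $3p$ and is divisible by $p$, so $b+[gb]+[g^2b]\in\{p,2p\}$. The resulting value $\varepsilon(b)\in\{1,2\}$ (after dividing by $p$) is constant on each $H$-coset, and because the coset sum then equals $\varepsilon\,p$ one finds $\sum_{h\in H}D(h)=p\sum_b\varepsilon(b)\,b=p^2\sum_{\text{cosets }C}\varepsilon(C)^2$. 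A counting argument — the total $\sum_b(b+[gb]+[g^2b])$ equals $\tfrac32 p(p-1)$, forcing the values $\varepsilon=1$ and $\varepsilon=2$ to occur on exactly $\frac{p-1}{6}$ cosets each — then yields $\sum_C\varepsilon(C)^2=\frac{5(p-1)}{6}$ and hence $\sum_{h\in H}s(h,p)=\frac{p-1}{12}$. The main obstacle is precisely controlling these coset sums; an alternative would be to evaluate $s(g,p)$ through Dedekind reciprocity applied to $g^2+g+1=pt$, but the coset-sum route is cleaner and is the source of the ``$1/p$'' correction.

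Finally I would assemble the pieces: the above gives $M(p,H)=\frac{2\pi^2}{p}\cdot\frac{p-1}{12}=\frac{\pi^2}{6}\bigl(1-\frac1p\bigr)\le\frac{\pi^2}{6}$. Since $K\ne{\mathbb Q}(\zeta_p)$ we have $w_K=2$ and $m/4=(p-1)/12$, while $\frac{pM(p,H)}{4\pi^2}=\frac{p-1}{24}\le\frac{p}{24}$; substituting into \eqref{boundhKminus} gives $h_K^-\le 2\bigl(\frac{p}{24}\bigr)^{(p-1)/12}$, as claimed.
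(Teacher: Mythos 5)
Your proof is correct, and it reaches the theorem by a genuinely different route from the paper's. Both arguments first collapse the mean value to Dedekind sums over the subgroup, arriving at $M(p,H)=\frac{2\pi^2}{p}\sum_{h\in H}s(h,p)$: you get there via the Gauss-sum expression for $L(1,\chi)$ and the sums $D(h)=\sum_b b[hb]$ (your normalization checks out, e.g.\ it reproduces \eqref{Mp1} for $H=\{1\}$), while the paper uses the cotangent formula \eqref{formulaL1X} leading to \eqref{formulaM(f,H)}; this part is equivalent bookkeeping. The real divergence is the evaluation of $S(H,p)=\sum_{h\in H}s(h,p)$. The paper (Lemma \ref{sabf}, invoked in Theorem \ref{thp3M2}) writes $p=a^2+ab+b^2$ using the splitting of $p$ in ${\mathbb Q}(\sqrt{-3})$, identifies $H=\{1,a/b,b/a\}$, and computes the Dedekind--Rademacher sum $s(a,b,p)=\frac{p-1}{12p}$ from the three-term reciprocity law \eqref{sbcddcb}. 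You instead exploit only the congruence $1+g+g^2\equiv 0\pmod p$: each coset sum $b+[gb]+[g^2b]$ equals $p$ or $2p$, the two values occur on exactly $\frac{p-1}{6}$ cosets each, whence $\sum_{h}D(h)=p^2\sum_C\varepsilon(C)^2=\frac{5p^2(p-1)}{6}$, and subtracting the constant terms $3\cdot\frac{p^2(p-1)}{4}$ gives $S(H,p)=\frac{p-1}{12}$ --- every step of this counting is sound, and it even recovers the individual value $s(g,p)=\frac{p-1}{12p}$. Your route is more elementary and self-contained (no quadratic-form representation, no Rademacher reciprocity); what the paper's route buys is generality: Lemma \ref{sabf} is stated for composite $f=a^2+ab+b^2$ and for individual Dedekind--Rademacher sums, which is exactly what powers the non-primitive extensions $d_0=2,3,6$ in Proposition \ref{boundsSab} and Theorems \ref{thp3M2} and \ref{thp3M2M6}, whereas your counting argument is tied to prime modulus (it needs every residue in $\{1,\dots,p-1\}$ to be invertible, since $s(h,f)$ for composite $f$ also runs over non-coprime residues) and does not obviously extend. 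The final assembly --- $w_K=2$, $m/4=(p-1)/12$, $\frac{pM(p,H)}{4\pi^2}=\frac{p-1}{24}\le\frac{p}{24}$ inserted into \eqref{boundhKminus} --- is the same in both.
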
 

In \cite{LouCMB36/37} 
(see also \cite{LouPMDebr78}), the following simple argument allowed to improve on \eqref{boundhpminus24}.
Let $d_0>1$ be a given integer. 
Assume that $\gcd (d_0,f)=1$. 
For $\chi$ modulo $f$ let $\chi'$ be the character modulo $d_0f$ induced by $\chi$. 
Then, 
\begin{equation}\label{L1XXprime}
L(1,\chi) 
=L(1,\chi')\prod_{q\mid d_0}\left (1-\frac{\chi(q)}{q}\right )^{-1}
\end{equation}
(throughout the paper this notation means that $q$ runs over the distinct prime divisors of $d_0$). 
Let $H$ be a subgroup of order $d$ of the multiplicative group $({\mathbb Z}/f{\mathbb Z})^*$, 
with $-1\not\in H$. 
We define
\begin{equation}\label{Md0(f,H)}
M_{d_0}(f,H)
:={1\over\# X_f^-(H)}\sum_{\chi\in X_f^-(H)}\vert L(1,\chi')\vert^2
\end{equation}
and\footnote{Note that $\Pi_{d_0}(f,H)\in {\mathbb Q}_+^*$, by Lemma \ref{formulaPi}.}
\begin{equation}\label{Pi}
\Pi_{d_0}(f,H)
:=\prod_{q\mid d_0}\prod_{\chi\in X_f^-(H)}\left (1-\frac{\chi(q)}{q}\right )
\text{ and }
D_{d_0}(f,H)
:=\Pi_{d_0}(f,H)^{4/m}.
\end{equation}
Clearly there is no restriction in assuming from now on that $d_0$ is square-free. 
Let now $H$ be of odd order $d$ in the multiplicative group $({\mathbb Z}/p{\mathbb Z})^*$.
Using (\ref{L1XXprime}), we obtain (compare with (\ref{boundhKminus})):
\begin{equation}\label{boundhKminusd0}
h_K^-
=\frac{w_K}{\Pi_{d_0}(p,H)}
\left (\frac{p}{4\pi^2}\right )^{m/4}\prod_{\chi\in X_K^-}L(1,\chi')
\leq w_K\left (\frac{pM_{d_0}(p,H)}{4\pi^2D_{d_0}(p,H)}\right )^{m/4}.
\end{equation} 
Let $d=o(\log p)$ as $p\rightarrow\infty$. 
Then, by Corollary \ref{PiKminus} below, 
we have
$$D_{d_0}(p,H) =1+o(1)$$
and we expect that 
\begin{equation}\label{asymptoticMd0(p,H)M(p,H)}
M_{d_0}(p,H)
\sim\left\{\prod_{q\mid d_0}\left (1-\frac{1}{q^2}\right )\right\}
\times M(p,H).
\end{equation} 
Hence, (\ref{boundhKminusd0}) should indeed improve on (\ref{boundhKminus}).
The aim of this paper is two-fold. Firstly, in Theorem \ref{asympd0} we give an asymptotic formula for $M_{d_0}(p,H)$ 
when $d$ satisfies the same restriction as in \eqref{asymptoticMpH} 
allowing us to improve on the bound \eqref{boundhKgen}. 
Secondly we treat the case of groups of order $1$ and $3$ for small $d_0$'s 
as well as the case of Mersenne primes and groups of size $\approx \log p$. 
In both cases an explicit description of these subgroups allows us to obtain explicit formulas for $M_{d_0}(p,H)$. 
Our main result is the following.

\noindent\frame{\vbox{
\begin{theorem}\label{asympd0}
Let $d_0\geq 1$ be a given square-free integer. 
As $p\rightarrow +\infty$ we have the following asymptotic formula
$$M_{d_0}(p,H)
=\frac{\pi^2}{6}\prod_{q\mid d_0}\left (1-{1\over q^2}\right )
+O(d(\log p)^2 p^{-\frac{1}{d-1}})
=\frac{\pi^2}{6}\prod_{q\mid d_0}\left (1-{1\over q^2}\right )
+o(1)$$ 
uniformly over subgroups $H$ of $({\mathbb Z}/p{\mathbb Z})^*$ of odd order $d\leq\frac{\log p}{3(\log\log p)}.$ 
Moreover, let $K$ be an imaginary abelian number field of prime conductor $p$ and of degree $m=(p-1)/d$. Let $C<4\pi^2=39.478..$ be any positive constant. If $p$ is sufficiently large and $m\geq 3\frac{(p-1) \log\log p} {\log p}$, then we have \begin{equation}\label{boundhKopt} 
h_K^-
\leq w_K\left (\frac{p}{C}\right )^{(p-1)/4d}.
\end{equation}
\end{theorem}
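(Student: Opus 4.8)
The plan is to reduce the computation of $M_{d_0}(p,H)$ to a sum of Dedekind-type sums over the subgroup $H$ and then to isolate its main term. Starting from \eqref{L1XXprime} we have $\vert L(1,\chi')\vert^2=\vert L(1,\chi)\vert^2\prod_{q\mid d_0}\vert 1-\chi(q)/q\vert^2$; equivalently, since $L(s,\chi')$ is obtained from $L(s,\chi)$ by deleting the Euler factors at the primes dividing $d_0$, its Dirichlet coefficients are supported on integers coprime to $d_0$. First I would insert into \eqref{Md0(f,H)} a finite cotangent/sawtooth expression for the odd value $L(1,\chi')$, obtained from the classical expansion of $L(1,\chi)$ by removing the Euler factors at the primes dividing $d_0$ (so that the coprimality-to-$d_0$ restriction is inherited), so that $\vert L(1,\chi')\vert^2$ becomes a finite double sum of products indexed by pairs of residues $(a,b)$ modulo $p$.

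Next I would carry out the average over $\chi\in X_p^-(H)$ using the orthogonality relation $\frac{1}{\#X_p^-(H)}\sum_{\chi\in X_p^-(H)}\chi(a)\overline{\chi(b)}=\mathbf 1[a\bar b\in H]-\mathbf 1[-a\bar b\in H]$, valid because $-1\notin H$ (here $\bar b$ denotes the inverse of $b$ modulo $p$). This collapses the double sum to a sum over the pairs $(a,b)$ with $a\bar b\in H$, that is, to $\sum_{h\in H}s_{d_0}(h,p)$ for a generalized Dedekind sum $s_{d_0}$ twisted by the condition $\gcd(\,\cdot\,,d_0)=1$. The contribution of $h=1$ produces the main term: in the Dirichlet-series picture it is the diagonal $\sum_{\gcd(n,d_0)=1}1/n^2=\frac{\pi^2}{6}\prod_{q\mid d_0}(1-1/q^2)$, which explains both the leading constant and the appearance of the Euler factors $\prod_{q\mid d_0}(1-1/q^2)$.

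The main obstacle is to bound the contribution of the nontrivial cosets $\sum_{h\in H\setminus\{1\}}s_{d_0}(h,p)$ uniformly over all subgroups $H$ of odd order $d$. This is exactly where the new estimates for (twisted) Dedekind sums developed in this paper are needed; the resulting bound is $O(d(\log p)^2p^{-1/(d-1)})$, the factor $p^{-1/(d-1)}$ reflecting a lower bound for the least nontrivial residue represented in $H$. In the stated range $d\leq\frac{\log p}{3\log\log p}$ one checks that $d(\log p)^2p^{-1/(d-1)}=o(1)$, so the error is negligible and, for $d_0=1$, the formula recovers \eqref{asymptoticMpH}.

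For the bound on $h_K^-$ I would first fix the auxiliary modulus. Given $C<4\pi^2$, choose a square-free $d_0$, a product of sufficiently many small primes, such that $24/\prod_{q\mid d_0}(1-1/q^2)>C$; this is possible because $\prod_{q\mid d_0}(1-1/q^2)$ can be made arbitrarily close to $\prod_{q}(1-1/q^2)=6/\pi^2$, so that the supremum over $d_0$ of $24/\prod_{q\mid d_0}(1-1/q^2)$ equals $4\pi^2$. Inserting the asymptotic formula for $M_{d_0}(p,H)$ together with $D_{d_0}(p,H)=1+o(1)$ from Corollary \ref{PiKminus} into \eqref{boundhKminusd0}, and noting that the hypothesis $m\geq 3\frac{(p-1)\log\log p}{\log p}$ is precisely $d=(p-1)/m\leq\frac{\log p}{3\log\log p}$ so that the asymptotic applies, we obtain $\frac{pM_{d_0}(p,H)}{4\pi^2D_{d_0}(p,H)}=\frac{p}{24}\prod_{q\mid d_0}(1-1/q^2)(1+o(1))\leq p/C$ for all sufficiently large $p$, which yields \eqref{boundhKopt}.
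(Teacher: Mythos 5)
Your proposal is correct and follows essentially the same route as the paper: orthogonality over $X_p^-(H)$ collapses $M_{d_0}(p,H)$ to a sum over $h\in H$ of Dedekind--Rademacher-type sums (the paper Möbius-expands $\prod_{q\mid d_0}(1-\chi(q)/q)$ into twisted moments $M_{\delta_1,\delta_2}(p,H)$, which is equivalent to your coprimality-twisted sums), the $h=1$ term yields the main term $\frac{\pi^2}{6}\prod_{q\mid d_0}\left(1-\frac{1}{q^2}\right)$, and the $h\neq 1$ terms are controlled by the discrepancy-based bounds of Theorems \ref{indivbound} and \ref{indivboundgen}. Your class-number deduction (taking $d_0$ a product of sufficiently many primes so that $24/\prod_{q\mid d_0}(1-1/q^2)>C$, then combining the asymptotic with \eqref{boundhKminusd0} and $D_{d_0}(p,H)=1+o(1)$ from Corollary \ref{PiKminus}) is exactly the argument the paper gives in the Remark following the theorem.
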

}}

\begin{Remark} The second result in Theorem \ref{asympd0} improves on \eqref{boundhKgen}, \eqref{boundhpminus24} and \eqref{boundthp3}. It follows from the first result in Theorem \ref{asympd0}, and by using \eqref{boundhKminusd0} and \eqref{behaviorPidexplicit}, where we take $d_0$ as the product of sufficiently many consecutive first primes. \\

The special case $d_0=1$ was proved in \cite[Theorem $1.1$]{LMQJM}. 
Note that the restriction on $d$ cannot be extended further to the range $d=O(\log p)$ as shown by Theorem \ref{TheoremMersenne}. Moreover the constant $C$ in \eqref{boundhKopt} cannot be taken larger than $4\pi^2$, 
see the discussion about Kummer's conjecture in \cite{MP01}.
\end{Remark} 

In the first part of the paper, the presentation goes as follows:
\begin{itemize}
\item In Section \ref{genlemmas}, 
we explain the condition about the prime divisors of $d_0$ and prove that $D_{d_0}(p,H) =1+o(1)$.
\item In Section \ref{Dedekind}, we review some results on Dedekind sums 
and prove a new bound of independent interest for Dedekind sums $s(h,f)$ with $h$ being of small order modulo $f$ 
(see Theorem \ref{indivbound}). 
To do so we use techniques from uniform distribution and discrepancy theory.
Then we relate $M_{d_0}(p,H)$ to twisted moments of $L$- functions which we further express in terms of Dedekind sums. 
For the sake of clarity, we first treat separately the case 
$H=\{1\}$. Note that we found that this case is related to elementary sums of maxima that we could not estimate directly, 
see Section \ref{sectionsummax}.
Using our estimates on Dedekind sums we deduce the asymptotic formula of Theorem \ref{asympd0} 
and the related class number bounds.
\end{itemize} In the second part of the paper, we focus on the explicit aspects. 
Let us describe briefly our presentation:
\begin{itemize}
\item In Section \ref{Casetrivialgroup} we establish a formula for $M_{d_0}(f,\{1\})$, $d_0>2$,
provided that all the prime factors $q$ of $f$ satisfy $q\equiv\pm 1\pmod{d_0}$. 
In particular, we get formulas for $M_{d_0}(f,\{1\})$ for $d_0\in\{1,2,3,6\}$ and $\gcd (d_0,f)=1$ 
(such formulae become harder to come by as $d_0$ gets larger). 
For example, for $p\geq 5$ and $d_0=6$, 
using Theorem \ref{M3M6}
we obtain the following formula for $M_6(p,\{1\})$:
$$M_6(p,\{1\})
={\pi^2\over 9}\left (1-\frac{c_p}{p}\right )
\leq\frac{\pi^2}{9},
\hbox{ where }
c_p
=\begin{cases}
1&\hbox{if $p\equiv 1\pmod 3$}\\
0&\hbox{if $p\equiv 2\pmod 3$}\\
\end{cases}$$ 
which by (\ref{boundhKminusd0}) and Corollary \ref{PiKminus} give improvements on (\ref{boundhpminus24}) 
(see also \cite{Feng} and \cite{LouCMB36/37}) 
$$h_{{\mathbb Q}(\zeta_p)}^-
\leq 3p\left ({p\over 36}\right )^{(p-1)/4}.$$
See also \cite[Theorem 5.2]{LouFetA??} for even better bounds.\\
In Section \ref{sectionformulad0} we obtain an explicit formula of the form 
\begin{equation}\label{Md0pHintro}
M_{d_0}(p,H)
=\frac{\pi^2}{6}
\left\{\prod_{q\mid d_0}\left (1-\frac{1}{q^2}\right )\right\}
\left (1+\frac{N_{d_0}(p,H)}{p}\right ),
\end{equation}
where $N_{d_0}(p,H)$ defined in (\ref{defNd0fH}) 
is an explicit average of Dedekind sums. 
In Proposition \ref{H=1} we prove that $N_{d_0}(p,\{1\})\in {\mathbb Q}$ depends only on $p$ modulo $d_0$ 
and is easily computable.
\item For $H\neq\{1\}$ explicit formulae for $M_{d_0}(p,H)$ seem difficult to come by. 
In Section \ref{genMersenne}, 
we focus on Mersenne primes $p=2^d-1$, 
with $d$ odd. We take $H=\{2^k;\ 0\leq k\leq d-1\}$, 
a subgroup of odd order $d$ of the multiplicative group $({\mathbb Z}/p{\mathbb Z})^*$. 
For $d_0\in\{1,3,15\}$ we prove in Theorem \ref{Mersenne} that
$$M_{d_0}(p,H)
=\frac{\pi^2}{2}\left\{\prod_{q\mid d_0}\left (1-\frac{1}{q^2}\right )\right\}
\left (1+\frac{N_{d_0}'(p,H)}{p}\right ),$$
where $N_{d_0}'(p,H)=a_1(p)d+a_0(p)$ 
with $a_1(p),a_0(p)\in {\mathbb Q}$ depending only on $p=2^d-1$ modulo $d_0$ 
and easily computable. 
In the range $d \gg \log p$, 
we see that $M_{d_0}(p,H)$ has a different asymptotic behavior than the one in Theorem \ref{asympd0}.
\item In Section \ref{Sectiond=3}, we turn to the specific case of subgroups of order $3$. 
Writing $f=a^2+ab+b^2$ not necessarily prime, 
and taking $H=\{1,a/b,b/a\}$, 
the subgroup of order $3$ of the multiplicative group $({\mathbb Z}/f{\mathbb Z})^*$, 
we prove in Proposition \ref{boundsSab} that $N_{d_0}(f,H) =O(\sqrt f)$ in (\ref{Md0pHintro}) for $d_0\in\{1,2,3,6\}$. 
To do so we obtain bounds for the Dedekind sums stronger than the one in Theorem \ref{indivbound}. 
Note that this cannot be expected in general for subgroups of order $3$ modulo composite $f$ 
(see Remark \ref{compositebound} and \ref{dedekindtwosizes}).
Furthermore we show that these bounds are sharp in the case of primes $p=a^2+a+1$, 
in accordance with Conjecture \ref{conjDedekind}.
\end{itemize}

\section{Preliminaries}\label{genlemmas}
\subsection{Algebraic considerations}
Take $a\in {\mathbb Z}$ with $\gcd (a,f)=1$.
There are infinitely many prime integers in the arithmetic progressions $a+f{\mathbb Z}$. 
Taking a prime $p\in a+f{\mathbb Z}$ with $p>d_0f$, 
we have $s_{d_0}(p)=a$, 
where 
$s_{d_0}:({\mathbb Z}/d_0f{\mathbb Z})^*\longrightarrow ({\mathbb Z}/f{\mathbb Z})^*$ 
is the canonical morphism. 
Therefore, $s_{d_0}$ surjective
and its kernel is of order $\phi(d_0)$. 
Let $H$ be a subgroup of $({\mathbb Z}/f{\mathbb Z})^*$ of order $d$.
Then $H_{d_0} =s_{d_0}^{-1}(H)$ is a subgroup of order $\phi(d_0)d$ of $({\mathbb Z}/d_0f{\mathbb Z})^*$ 
and as $\chi$ runs over $X_f^-(H)$ the $\chi'$'s run over $X_{d_0f}^-(H_{d_0})$ 
and by (\ref{M(f,H)}) and (\ref{Md0(f,H)}) we have
\begin{equation}\label{MM}
M_{d_0}(f,H) 
=M(d_0f,H_{d_0}).
\end{equation} 
The following Lemma is probably well known but we found no reference in the literature.

\begin{lemma}\label{kernel}
Let $f>2$. 
Let $H$ be a subgroup of index $m =(G:H)$ in the multiplicative group $G:=({\mathbb Z}/f{\mathbb Z})^*$. 
Then $\# X_f(H) =m$ and $H=\cap_{\chi\in X_f(H)}\ker\chi$.
Moreover, if $-1\not\in H$, then $m$ is even, $\# X_f^-(H)=m/2$ and $H=\cap_{\chi\in X_f^-(H)}\ker\chi$.
\end{lemma}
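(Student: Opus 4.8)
The plan is to run everything through the character theory (Pontryagin duality) of the finite abelian group $G = (\mathbb{Z}/f\mathbb{Z})^*$. The key observation is that a character $\chi \in X_f$ satisfies $\chi_{/H} = 1$ precisely when it factors through the quotient $G/H$; thus restriction to the quotient identifies $X_f(H)$ with the full character group $\widehat{G/H}$ of the abelian group $G/H$ of order $m$. Since a finite abelian group and its dual have the same order, this gives $\#X_f(H) = m$ at once.

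For the identity $H = \cap_{\chi \in X_f(H)}\ker\chi$, one inclusion is immediate, every $\chi \in X_f(H)$ being trivial on $H$. For the reverse I would use the standard fact that the characters of a finite abelian group separate its points: if $g \notin H$, then its image $\bar g$ in $G/H$ is nontrivial, so some $\bar\chi \in \widehat{G/H}$ has $\bar\chi(\bar g) \neq 1$; lifting $\bar\chi$ to $\chi \in X_f(H)$ yields $\chi(g) \neq 1$, so $g \notin \cap_{\chi\in X_f(H)}\ker\chi$.

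Now assume $-1 \notin H$. Since $f > 2$, the class of $-1$ has order $2$ in $G$, and because $-1 \notin H$ its image in $G/H$ still has order $2$; hence $2 \mid |G/H| = m$, so $m$ is even. To count $X_f^-(H)$, I would consider the homomorphism $X_f(H) \to \{\pm 1\}$, $\chi \mapsto \chi(-1)$. It is surjective because, by the separation property, some character of $G/H$ is nontrivial on the order-$2$ element $\overline{-1}$, forcing its value there to be $-1$. Its kernel is the even subgroup $X_f^+(H) := \{\chi \in X_f(H): \chi(-1) = 1\}$, so the complementary fibre $X_f^-(H)$ has exactly $m/2$ elements.

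The last and most delicate assertion, $H = \cap_{\chi \in X_f^-(H)}\ker\chi$, cannot be obtained from duality directly, since $X_f^-(H)$ is a coset and not a subgroup; this is the main obstacle. My plan is to reduce it to the second assertion applied to the enlarged subgroup $H' := \langle H, -1\rangle = H \sqcup (-1)H$ of order $2d$. Fix some $\chi_0 \in X_f^-(H)$, which exists by the previous paragraph, and note $X_f^+(H) = X_f(H')$ together with $X_f^-(H) = \chi_0\, X_f^+(H)$. Suppose $g \in \cap_{\chi \in X_f^-(H)}\ker\chi$. Taking $\chi = \chi_0$ (the case of the trivial factor in $X_f^+(H)$) gives $\chi_0(g) = 1$, and then $\chi_0(g)\psi(g) = 1$ for all $\psi \in X_f^+(H)$ forces $\psi(g) = 1$ for every $\psi \in X_f(H')$; by the second assertion applied to $H'$ this means $g \in H'$. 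If $g$ lay in the nontrivial coset $(-1)H$, say $g = -h$ with $h \in H$, then $\chi_0(g) = \chi_0(-1)\chi_0(h) = -1$, contradicting $\chi_0(g) = 1$. Hence $g \in H$, and since the reverse inclusion is immediate, the proof is complete.
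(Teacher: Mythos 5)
Your proof is correct, and while the first half (the identification $X_f(H)\cong\widehat{G/H}$, point separation, the parity of $m$, and the count $\#X_f^-(H)=m/2$) runs along essentially the same lines as the paper, your treatment of the final identity $H=\cap_{\chi\in X_f^-(H)}\ker\chi$ is genuinely different. The paper argues contrapositively: given $g\notin H$ it forms $H'':=\langle g,H\rangle$ and splits into two cases according to whether $-1\in H''$; in the first case every $\chi\in X_f^-(H)$ must satisfy $\chi(g)\neq 1$ (else $\chi(-1)=\chi(g^kh)=1$), and in the second case a counting argument shows $X_f^-(H)\setminus X_f^-(H'')$ is nonempty and any character in it is nontrivial at $g$. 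You instead exploit the coset structure $X_f^-(H)=\chi_0\,X_f^+(H)=\chi_0\,X_f(H')$ with $H'=\langle -1,H\rangle$: if $g$ lies in all the kernels, then dividing by $\chi_0(g)=1$ shows $g$ is killed by every character of $X_f(H')$, so $g\in H'$ by the already-proved kernel identity applied to $H'$, and the possibility $g\in(-1)H$ is excluded by evaluating $\chi_0$. Your route avoids the case analysis entirely and makes transparent why the statement for the non-subgroup set $X_f^-(H)$ reduces to the statement for the subgroup $X_f(H')$ plus one auxiliary odd character; the paper's route is more hands-on but needs the auxiliary subgroup $H''$ and a cardinality computation for each $g$. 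Both are complete; yours is arguably the cleaner reduction.
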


\begin{proof}
Since $X_f(H)$ is isomorphic to the group of Dirichlet characters of the abelian quotient group $G/H$, 
it is of order $m$, 
by \cite[Chapter VI, Proposition 2]{Ser}. 
Clearly, $H\subseteq \cap_{\chi\in X_f(H)}\ker\chi$. 
Conversely, take $g\not\in H$, of order $n\geq 2$ in the abelian quotient group $G/H$. 
Define a character $\chi$ of the subgroup $\langle g,H\rangle$ of $G$ generated by $g$ and $H$ 
by $\chi(g^kh) =\exp(2\pi ik/n)$, $(k,h)\in {\mathbb Z}\times H$.
It extends to a character of $G$ still denoted $\chi$, 
by \cite[Chapter VI, Proposition 1]{Ser}. 
Since $g\not\in\ker\chi$ and $\chi\in X_f(H)$ we have $g\not\in \cap_{\chi\in X_f(H)}\ker\chi$, 
i.e. $ \cap_{\chi\in X_f(H)}\ker\chi\subseteq H$.

Now, assume that $-1\not\in H$. Set $H'=\langle -1,H\rangle$, of index $m/2$ in $G$. 
Then $X_f^-(H) =X_f(H)\setminus X_f(H')$ is indeed of order $m-m/2 =m/2$, 
by the first assertion. 
Clearly, $H\subseteq \cap_{\chi\in X_f^-(H)}\ker\chi$. 
Conversely, take $g\not\in H$. 
Set $H'':=\langle g,H\rangle=\{g^kh;\ k\in {\mathbb Z},\ h\in H\}$, of index $m''$ in $G$, with $m>m''$.
If $-1=g^kh\in H''$ then clearly $\chi (g)\neq 1$ for $\chi\in X_f^-(H)$, 
hence $g\not\in \cap_{\chi\in X_f^-(H)}\ker\chi$. 
If $-1\not\in H''$ and $\chi\in X_f^-(H)\setminus X_f^-(H'')$, 
a non-empty set or cardinal $m/2-m''/2 =(H'':H)/2\geq 1$, 
then clearly $\chi (g)\neq 1$, hence $g\not\in \cap_{\chi\in X_f^-(H)}\ker\chi$. 
Therefore, $\cap_{\chi\in X_f^-(H)}\ker\chi\subseteq H$.
\end{proof}

\begin{Remark}\label{restrictiond0}
We have 
$M_{d_0}(p,H)/D_{d_0}(p,H)
=M_{d_0/q}(p,H)/D_{d_0/q}(p,H)$ 
whenever a prime $q$ dividing $d_0$ is in $\cap_{\chi\in X_p^-(H)}\ker\chi$. 
Hence, by Lemma \ref{kernel}, 
when applying (\ref{boundhKminusd0}) we may assume that no prime divisor of $d_0$ is in $H$.
\end{Remark}

\subsection{On the size of $\Pi_{d_0}(f,H)$ and $D_{d_0}(f,H)$ defined in (\ref{Pi})}\label{sizeproduct}
\begin{lemma}\label{formulaPi}
Let $H$ be a subgroup of order $d\geq 1$ of the multiplicative group $({\mathbb Z}/f{\mathbb Z})^*$, 
where $f>2$. 
Assume that $-1\not\in H$. 
Let $g$ be the order of a given prime integer $q$ in the multiplicative quotient group $({\mathbb Z}/f{\mathbb Z})^*/H$. 
Let $X_f(H)$ be the multiplicative group of the $\phi(f)/d$ Dirichlet characters modulo $f$ for which $\chi_{/H} =1$. 
Define $X_f^-(H) =\{\chi\in X_f(H);\ \chi (-1)=-1\}$, 
a set of cardinal $\phi(f)/(2d)$.
Then 
$$\Pi_{q}(f,H)
:=\prod_{\chi\in X_f^-(H)}\left (1-\frac{\chi (q)}{q}\right )
=\begin{cases}
\left (1+\frac{1}{q^{g/2}}\right )^{\frac{\phi(f)}{dg}}&\hbox{if $g$ is even and $-q^{g/2}\in H$,}\\
\left (1-\frac{1}{q^g}\right )^{\frac{\phi(f)}{2dg}}&\hbox{otherwise.}
\end{cases}$$
\end{lemma}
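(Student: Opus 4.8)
The plan is to reduce the product over the coset $X_f^-(H)$ to a ratio of two products taken over honest character groups. Write $G=({\mathbb Z}/f{\mathbb Z})^*$ and $m=\phi(f)/d$, so that $X_f(H)\cong\widehat{G/H}$ has order $m$. Since $-1\notin H$, the subgroup $H':=\langle -1,H\rangle$ has order $2d$; the even characters trivial on $H$ are precisely those trivial on $H'$, so $X_f^+(H):=\{\chi\in X_f(H):\chi(-1)=1\}=X_f(H')$ and $X_f^-(H)=X_f(H)\setminus X_f(H')$. Hence
$$\Pi_q(f,H)=\frac{\prod_{\chi\in X_f(H)}\left(1-\frac{\chi(q)}{q}\right)}{\prod_{\chi\in X_f(H')}\left(1-\frac{\chi(q)}{q}\right)},$$
and it suffices to evaluate the numerator and the denominator, each a product over a character group.

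For the numerator I would use the standard counting fact: if $\bar q$ denotes the image of $q$ in $G/H$ and $g$ is its order, then evaluation $\chi\mapsto\chi(q)$ maps $\widehat{G/H}$ onto the group $\mu_g$ of $g$-th roots of unity, with kernel $\widehat{(G/H)/\langle\bar q\rangle}$, so every fibre has the same cardinality $m/g$. Grouping the factors by the common value $\zeta=\chi(q)$ and using $\prod_{\zeta\in\mu_g}(T-\zeta)=T^g-1$ evaluated at $T=q$ gives
$$\prod_{\chi\in X_f(H)}\left(1-\frac{\chi(q)}{q}\right)=\left(\prod_{\zeta\in\mu_g}\left(1-\frac{\zeta}{q}\right)\right)^{m/g}=\left(1-\frac{1}{q^g}\right)^{m/g}.$$
Applying the identical argument to $G/H'$, with $g'$ the order of $q$ in $G/H'$ and $|X_f(H')|=m/2$, the denominator equals $(1-q^{-g'})^{m/(2g')}$.

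The remaining and only delicate point is to compare $g'$ with $g$ and thereby recover the stated dichotomy. Here $G/H'=(G/H)/\langle\overline{-1}\rangle$ with $\overline{-1}$ of order $2$, so $g'$ is the order of $\bar q$ modulo $\langle\overline{-1}\rangle$. Since the cyclic group $\langle\bar q\rangle$ contains an element of order $2$ exactly when $g$ is even, and that element is then uniquely $\bar q^{g/2}$, one has $\overline{-1}\in\langle\bar q\rangle$ if and only if $g$ is even and $q^{g/2}\equiv -1\pmod H$, i.e. $-q^{g/2}\in H$. In this case $g'=g/2$, the exponents $m/g$ and $m/(2g')$ coincide, and the ratio simplifies via $1-q^{-g}=(1-q^{-g/2})(1+q^{-g/2})$ to $(1+q^{-g/2})^{m/g}$; in the contrary (``otherwise'') case $\langle\bar q\rangle\cap\langle\overline{-1}\rangle=\{1\}$ forces $g'=g$, and the ratio becomes $(1-q^{-g})^{m/g-m/(2g)}=(1-q^{-g})^{m/(2g)}$. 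Substituting $m=\phi(f)/d$ yields exactly the two cases of the lemma. I expect the bookkeeping relating $g'$ to $g$ to be the main place where care is needed; the surjectivity-with-equal-fibres count and the root-of-unity evaluation are routine.
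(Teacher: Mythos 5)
Your proof is correct and follows essentially the same route as the paper's: both write $\Pi_q(f,H)$ as the ratio of the product over $X_f(H)$ by the product over $X_f(H')$ with $H'=\langle -1,H\rangle$, evaluate each via the roots-of-unity identity $\prod_{\zeta\in\mu_g}(X-\zeta)=X^g-1$ together with the surjectivity-with-equal-fibres of character restriction/evaluation, and then resolve the dichotomy by comparing the orders $g$ and $g'$ of $q$ in $G/H$ and $G/H'$. The only differences are cosmetic (evaluation map versus restriction map, and a quotient-group argument versus direct computation with powers of $q$ in $H$ and $H'$ for the $g$ versus $g'$ comparison).
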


\begin{proof}
Let $\alpha$ be of order $g$ in an abelian group $A$ of order $n$.
Let $B=\langle\alpha\rangle$ be the cyclic group generated by $\alpha$. 
Let $\hat B$ be the group of the $g$ characters of $B$.
Then 
$P_{B}(X) :=\prod_{\chi\in\hat B} (X-\chi (\alpha)) =X^g-1$.
Now, the restriction map $\chi\in\hat A\rightarrow\chi_{/B}\in\hat B$ is surjective, 
by \cite[Proposition 1]{Ser}, and of kernel isomorphic to $\widehat{A/B}$ of order $n/g$, 
by \cite[Proposition 2]{Ser}. 
Therefore,
$P_{A}(X) 
:=\prod_{\chi\in\hat A} (X-\chi (\alpha)) 
=P_B(X)^{n/g}
=(X^g-1)^{n/g}$.\\
With $A=({\mathbb Z}/f{\mathbb Z})^*/H$ of order $n=\phi(f)/d$, 
we have $\hat A =X_f(H)$ and 
$$\prod_{\chi\in X_f(H)}(X-\chi (q))
=(X^g-1)^{\frac{\phi(f)}{dg}}.$$ 
Let $H'$ be the subgroup of order $2d$ generated by $-1$ and $H$. 
With $A'=({\mathbb Z}/f{\mathbb Z})^*/H'$ of order $n'=\phi(f)/(2d)$, 
we have 
$\hat A' =X_f(H') =X_f^+(H):=\{\chi\in X_f(H);\ \chi (-1)=+1\}$ 
and 
$$\prod_{\chi\in X_f^+(H)}(X-\chi (q))
=(X^{g'}-1)^{\frac{\phi(f)}{2dg'}},$$ 
where $q$ is of order $g'$ in $A'$.\\ 
Since $X_f^-(H) =X_f(H)\setminus X_f^+(H)$, 
it follows that 
$$\prod_{\chi\in X_f^-(H)} (X-\chi (q))
=\frac{(X^g-1)^{\frac{\phi(f)}{dg}}}{(X^{g'}-1)^{\frac{\phi(f)}{2dg'}}}.$$
Since $q^g\in H$ we have $q^g\in H'$ and $g'$ divides $g$. 
Since $q^{g'}\in H'=\{\pm h;\ h\in H\}$ we have $q^{2g'}\in H$ and $g$ divides $2g'$. 
Hence, $g=g'$ or $g =2g'$ and $g=2g'$ if and only if $g$ is even and $q^{g/2}=q^{g'}\in H'\setminus H=\{-h;\ h\in H\}$. 
The assertion follows.
\end{proof}

\begin{corollary}\label{PiKminus}
Fix $d_0>1$ square-free. 
Let $p\geq 3$ run over the prime integers that do not divide $d_0$. 
Let $H$ a subgroup of odd order $d$ of the multiplicative group $({\mathbb Z}/p{\mathbb Z})^*$.
Then,
\begin{equation}\label{behaviorPidexplicit}
D_{d_0}(p,H)\
=1+O(\omega(d_0)p^{-1/2(d-1)})
\end{equation}
where $\omega(d_0)$ stands for the number of prime divisors of $d_0$. In particular when $d =o(\log p)$, 
we have
\begin{equation}\label{behaviorPid0pH}
D_{d_0}(p,H)=1+o(1).
\end{equation}
Moreover, 
$$\Pi _{d_0}(p,\{1\})
\geq\exp\left (\frac{\log d_0}{2}F(p+1)\right ), 
\hbox{ where } 
F(x):=\frac{(x-2)\log\left (1-\frac{1}{x}\right )}{\log x}, \\\ (x>1).$$
In particular, $\Pi_{6}(p,\{1\}) \geq 2/3 \,\, \hbox{for $p\geq 5$.}$
\end{corollary}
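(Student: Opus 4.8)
The plan is to exploit the multiplicativity $\Pi_{d_0}(p,H)=\prod_{q\mid d_0}\Pi_q(p,H)$, hence $D_{d_0}(p,H)=\prod_{q\mid d_0}\Pi_q(p,H)^{4/m}$, and to estimate each factor separately using the closed form of Lemma \ref{formulaPi}. First I would record that, by Remark \ref{restrictiond0} and Lemma \ref{kernel}, there is no loss in assuming $q\notin H$ for every prime $q\mid d_0$; in any case, if $q\in H$ then $\mathrm{ord}_p(q)\mid d$, so $p\mid q^{j}-1$ for some $1\le j\le d$ and only finitely many primes $p$ are excluded. Writing $g=g_q$ for the order of $q$ in the quotient $({\mathbb Z}/p{\mathbb Z})^*/H$ and specialising $\phi(p)=p-1$, $m=(p-1)/d$ in Lemma \ref{formulaPi}, one finds
$$\Pi_q(p,H)^{4/m}=\bigl(1+q^{-g/2}\bigr)^{4/g}\quad\text{or}\quad\bigl(1-q^{-g}\bigr)^{2/g},$$
according to the two cases of that lemma, so the whole problem reduces to a good lower bound for $g$, equivalently for $q^{g}$.

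This lower bound is the key estimate and the main obstacle. The subgroup $\langle q,H\rangle$ of the cyclic group $({\mathbb Z}/p{\mathbb Z})^*$ has order $gd$ and contains $q$; if $t=\mathrm{ord}_p(q)$ then $p\mid q^t-1$ forces $q^t\ge p+1$, while $t\mid gd$ gives $q^{gd}\ge q^t\ge p$, whence $q^{g}\ge p^{1/d}$ unconditionally. To reach the sharper $q^{g}\ge p^{1/(d-1)}$ I would split according to the order $e$ of $q^{g}$ inside $H$: since $e\mid d$ and $t=ge$, if $e<d$ then $e\le d/3$ and $q^{g}=(q^{t})^{1/e}\ge p^{3/d}\ge p^{1/(d-1)}$, whereas the delicate case is $e=d$, i.e. $q^{g}$ generates $H$ and $t=gd$. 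For odd $g$ (the ``otherwise'' case of Lemma \ref{formulaPi}) the factor is $(1-q^{-g})^{2/g}=1+O(q^{-g}/g)=1+O(p^{-1/d}/g)$, comfortably within the target. The genuinely awkward configuration is $e=d$ with $g$ even: here one lands in the first case of Lemma \ref{formulaPi}, the factor is $(1+q^{-g/2})^{4/g}$, and only $q^{g/2}\ge p^{1/(2d)}$ is available from the crude bound; controlling this term so that the overall error stays of size $p^{-1/(2(d-1))}$ is where the argument must be run most carefully. Granting the order bound, a first-order expansion of the logarithm gives $\Pi_q(p,H)^{4/m}=1+O\bigl(q^{-g/2}/g\bigr)=1+O(p^{-1/(2(d-1))})$ in every case, and multiplying the $\omega(d_0)$ factors yields \eqref{behaviorPidexplicit}. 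The statement \eqref{behaviorPid0pH} is then immediate: with $\omega(d_0)$ fixed, $p^{-1/(2(d-1))}=\exp(-\tfrac{\log p}{2(d-1)})\to0$ precisely when $d=o(\log p)$.

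For the explicit lower bound on $\Pi_{d_0}(p,\{1\})$ I would again argue prime by prime, now with $d=1$, so that $g=t=\mathrm{ord}_p(q)$. By Lemma \ref{formulaPi}, if $t$ is even then $\Pi_q(p,\{1\})=(1+q^{-t/2})^{(p-1)/t}\ge 1$, which already beats the (negative) target since $F(p+1)<0$. If $t$ is odd then $\Pi_q(p,\{1\})=(1-q^{-t})^{(p-1)/(2t)}$, and taking logarithms,
$$\log\Pi_q(p,\{1\})=\frac{p-1}{2}\log q\cdot\frac{\log(1-q^{-t})}{\log(q^{t})}=\frac{(p-1)\log q}{2}\,G(q^{t}),\qquad G(x):=\frac{\log(1-1/x)}{\log x}.$$
The point is that $G$ is increasing on $(1,\infty)$ (it rises from $-\infty$ to $0^{-}$), and $q^{t}\ge p+1$, so $G(q^{t})\ge G(p+1)$. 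Since $F(x)=(x-2)G(x)$, this is exactly $\log\Pi_q(p,\{1\})\ge\frac{\log q}{2}F(p+1)$. Summing over the distinct primes $q\mid d_0$ and using $\sum_{q\mid d_0}\log q=\log d_0$ (here $d_0$ is square-free) gives $\log\Pi_{d_0}(p,\{1\})\ge\frac{\log d_0}{2}F(p+1)$, as required. The only analytic input is the monotonicity of $G$, which I would verify by differentiating $\log(1-1/x)/\log x$.

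Finally, for $\Pi_6(p,\{1\})\ge 2/3$ when $p\ge5$ I would specialise the previous bound to $d_0=6$: it suffices that $\exp(\tfrac{\log 6}{2}F(p+1))\ge\tfrac23$. Since $F(x)=(x-2)G(x)$ with $G$ increasing and $x-2>0$, one checks that $F$ is increasing on $[6,\infty)$, so $F(p+1)\ge F(6)=\tfrac{4\log(5/6)}{\log 6}$; substituting and exponentiating gives
$$\Pi_6(p,\{1\})\ge\exp\Bigl(\tfrac{\log 6}{2}F(6)\Bigr)=\exp\bigl(2\log(5/6)\bigr)=\tfrac{25}{36}>\tfrac{24}{36}=\tfrac23.$$
The expected principal difficulty is the order estimate of the second paragraph, specifically the even-order case where $q^{g}$ generates $H$; the remaining steps are routine expansions and the elementary monotonicity of $G$ and $F$.
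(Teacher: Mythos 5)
Your overall strategy (factor $D_{d_0}(p,H)=\prod_{q\mid d_0}D_q(p,H)$, apply Lemma \ref{formulaPi}, and reduce everything to a lower bound on $q^{g}$) is exactly the paper's, and the second half of your proposal is sound; but the main estimate \eqref{behaviorPidexplicit} is not actually proved. In the configuration you yourself flag as ``genuinely awkward'' ($q^{g}$ generates $H$ and $g$ is even) you \emph{grant} the order bound rather than establish it, and your method cannot deliver it: working only with $t=\mathrm{ord}_p(q)$ and $q^{t}\geq p+1$ gives, when $t=gd$, just $q^{g}\geq p^{1/d}$, hence an error $O(q^{-g/2}/g)=O(p^{-1/(2d)})$ for the factor $(1+q^{-g/2})^{4/g}$; since $p^{-1/(2d)}/p^{-1/(2(d-1))}=p^{1/(2d(d-1))}\to\infty$, this is strictly weaker than \eqref{behaviorPidexplicit}. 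The missing idea is the paper's key step, and it bypasses your case analysis on $e$ entirely: it is a statement about elements of $H$ as residues, not about the multiplicative order of $q$. Let $h\in\{1,\dots,p-1\}$ be the residue of $q^{g}$ modulo $p$; it lies in $H$. If $h=1$ then $q^{g}\geq p+1$. If $h\neq 1$, then $h^{d}\equiv 1\pmod p$ and $p\nmid h-1$ force $p$ to divide $1+h+\cdots+h^{d-1}\leq 2h^{d-1}$, whence $h\geq (p/2)^{1/(d-1)}$; since either $q^{g}=h$ or $q^{g}\geq p$, in all cases $q^{g}\geq (p/2)^{1/(d-1)}$. Taking square roots gives $q^{g/2}\geq (p/2)^{1/(2(d-1))}$, so the awkward factor is $1+O\bigl(p^{-1/(2(d-1))}/g\bigr)$, and \eqref{behaviorPidexplicit} follows by multiplying the $\omega(d_0)$ factors, exactly along the lines you sketch. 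This is why your order-theoretic approach saturates at $p^{1/d}$ while the paper gets $p^{1/(d-1)}$ uniformly.

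Two smaller points. First, your opening remark that assuming $q\notin H$ ``excludes only finitely many primes'' is false once $d$ is allowed to grow with $p$ (for Mersenne primes $p=2^{d}-1$ one has $2\in H=\langle 2\rangle$ for every such $p$); no reduction is needed, since the uniform bound above covers $q\in H$ as well (then $g=1$ and $q\geq(p/2)^{1/(d-1)}$ still holds, the resulting estimate merely being weak). Second, in the $\Pi_6$ computation, ``$(x-2)$ positive increasing times $G$ negative increasing'' does not formally imply that $F=(x-2)G$ is increasing (compare $x\cdot(-x^{-1/2})=-\sqrt{x}$, which is decreasing); the monotonicity of $F$ on $[6,\infty)$ is true but requires the derivative computation you defer — after using $\log\frac{x}{x-1}\leq\frac{1}{x-1}$ it reduces to the inequality $x-2\geq 2\log x$, valid for $x\geq 6$. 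With that check supplied, your treatment of the lower bound on $\Pi_{d_0}(p,\{1\})$ (via monotonicity of $G$, which is the paper's exponent manipulation in disguise) and your conclusion $\Pi_6(p,\{1\})\geq 25/36>2/3$ are correct, and in fact fill in a verification the paper leaves to the reader.
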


\begin{proof}
Let $q$ be a prime divisor of $d_0$. 
Let $g$ be the order of $q$ in the multiplicative quotient group $({\mathbb Z}/p{\mathbb Z})^*/H$. 
Then 
$$\left (1-\frac{1}{q^g}\right )^{\frac{2}{g}}
\leq D_q(p,H)
=\Pi_{q}(p,H)^{\frac{4d}{p-1}} 
\leq\left (1+\frac{1}{q^{g/2}}\right )^{\frac{4}{g}},$$
by (\ref{Pi}) and Lemma \ref{formulaPi}, with $f=p$, $\phi(f)=p-1$ and $m =(p-1)/d$.
Either $q^g\equiv 1\pmod p$, in which case $q^g\geq p+1$, 
or $q^g\equiv h\pmod p$ for some $h\in\{2,\cdots,p-1\}\cap H$, 
in which case $p$ divides 
$S:=1+h+\cdots +h^{d-1}$ 
which satisfies 
$p\leq S\leq 2h^{d-1}$. 
Therefore, in both cases, we have $q^{g}\geq (p/2)^{\frac{1}{d-1}}$. Hence,
$$\log D_q(p,H) \geq \frac{2}{g} \log(1-q^{-g}) \geq \frac{2}{g}(-2\log 2)q^{-g} \geq -4(\log 2)(p/2)^{-1/(d-1)}$$
where we used for $x=q^{-g}$ the fact that $\log(1-x) \geq -2(\log 2)x$ in $\left[0,1/2\right]$.
$$ D_q(p,H) \geq 1-4(\log 2)(p/2)^{-1/(d-1)}$$ where we used the fact that $e^{-x} \geq 1-x$. 
Therefore we have, 
$$D_{d_0}(p,H) =\prod_{q \mid d_0}D_q(p,H) \geq 1-4(\log 2)\omega(d_0)\left(\frac{p}{2}\right)^{-1/(d-1)}$$ 
where we used the inequality $(1-x)^n \geq 1-nx$ for $x \leq 1$ and $n\in \mathbb{N}$. 
A similar reasoning gives an explicit upper bound $D_{d_0}(p,H) \leq 1+ c\omega(d_0)p^{-1/2(d-1)}$ 
for some constant $c>0$. 
Therefore, we do get \eqref{behaviorPidexplicit}. 
Finally, $p^{1/(d-1)}$ tends to infinity in the range $d =o(\log p)$
and (\ref{behaviorPid0pH}) follows. 

Notice that if $p=2^d-1$ runs over the Mersenne primes and $H=\langle 2\rangle$, 
we have $d=O(\log p)$ but $D_2(p,H) =\left (1-\frac{1}{2}\right )^2$ does not satisfy (\ref{behaviorPid0pH}).

Now, assume that $H=\{1\}$. 
Then, $K={\mathbb Q}(\zeta_p)$ and 
$q^g\geq p+1$. 
Hence,
$$\Pi_q(p,\{1\})
\geq\left (1-\frac{1}{p+1}\right )^{\frac{p-1}{2g}} 
\geq\left (1-\frac{1}{p+1}\right )^{\frac{(p-1)\log q}{2\log (p+1)}} 
=\exp\left (\frac{\log q}{2}F(p+1)\right ).$$
The desired lower bound easily follows. 
\end{proof}

\section{Dedekind sums and mean square values of $L$-functions}\label{Dedekind}
\subsection{Dedekind sums and Dedekind-Rademacher sums}
The Dedekind sums is the rational number defined by
\begin{equation}\label{defscd}
s(c,d)
={1\over 4d}\sum_{n=1}^{\vert d\vert -1}\cot\left ({\pi n\over d}\right )\cot\left ({\pi nc\over d}\right )
\ \ \ \ \ (c\in {\mathbb Z},\ d\in {\mathbb Z}\setminus\{0\},\ \gcd (c,d)=1),
\end{equation}
with the convention $s(c,-1)=s(c,1)=0$ for $c\in {\mathbb Z}$
(see \cite{Apo} or \cite{RG} where it is however assumed that $d>1$).
It depends only on $c$ mod $\vert d\vert$ 
and $c\mapsto s(c,d)$ can therefore be seen as a mapping 
from $({\mathbb Z}/\vert d\vert {\mathbb Z})^*$ to ${\mathbb Q}$.
Notice that 
\begin{equation}\label{cc*}
s(c^*,d)=s(c,d)
\text{ whenever }
cc^*\equiv 1\pmod d
\end{equation} 
(make the change of variables $n\mapsto nc$ in $s(c^*,d)$). 
Recall the reciprocity law for Dedekind sums 
\begin{equation}\label{scddc}
s(c,d)+s(d,c) 
={c^2+d^2-3\vert cd\vert +1\over 12cd},
\ \ \ \ \ (c,d\in {\mathbb Z}\setminus\{0\},\ \gcd(c,d)=1).
\end{equation}
In particular, 
\begin{equation}\label{s1d}
s(1,d)
=\frac{d^2-3\vert d\vert +2}{12d}
\text{ and } 
s(2,d)
=\frac{d^2-6\vert d\vert +5}{24d}
\ \ \ \ \ (d\in {\mathbb Z}\setminus\{0\}).
\end{equation}
For $b,c\in {\mathbb Z},\ d\in {\mathbb Z}\setminus\{-1,0,1\}$ such that $\gcd (b,d) =\gcd (c,d)=1$, 
the Dedekind-Rademacher sum is the rational number defined by
$$s(b,c,d)
={1\over 4d}\sum_{n=1}^{\vert d\vert -1}\cot\left ({\pi nb\over d}\right )\cot\left ({\pi nc\over d}\right ),$$ 
with the convention $s(b,c,-1)=s(b,c,1)=0$ for $b,c\in {\mathbb Z}$.
Hence, $s(c,d) =s(1,c,d)$, 
if $\alpha\in ({\mathbb Z}/\vert d\vert {\mathbb Z})^*$ 
is represented as $\alpha =b/c$ with $\gcd(b,d)=\gcd(c,d) =1$, 
then $s(\alpha,d) =s(b,c,d)$, 
and 
\begin{equation}\label{bcd=abacd}
\hbox{$s(b,c,d) =s(ab,ac,d)$ for any $a\in {\mathbb Z}$ with $\gcd (a,d)=1$.}
\end{equation}
For $\gcd( b,c) =\gcd (c,d) =\gcd (d,b) =1$ 
we have a reciprocity law for Dedekind-Rademacher sums 
(see \cite{Rad} or \cite{BR}):
\begin{equation}\label{sbcddcb}
s(b,c,d)+s(d,b,c)+s(c,d,b) ={b^2+c^2+d^2-3\vert bcd\vert\over 12bcd}.
\end{equation}
The Cauchy-Schwarz inequality and (\ref{s1d}) yield 
\begin{equation}\label{boundscd}
\vert s(c,d)\vert
\leq s(1,\vert d\vert)
\leq\vert d\vert/12
\hbox { and }
\vert s(b,c,d)\vert
\leq s(1,\vert d\vert)
\leq\vert d\vert/12.
\end{equation}

\subsection{Non trivial bounds on Dedekind sums}\label{boundsDedekind}
In this section we will use the alternative definition of the Dedekind sums given by
$$s(c,d)
=\sum_{a=1}^{d-1} 
\left(\left(\frac{a}{d}\right)\right) \left(\left(\frac{ac}{d}\right)\right) 
\ \ \ \ \ (c\in {\mathbb Z},\ d\geq 1,\ \gcd (c,d)=1)$$ 
where $ \left(\left(\right)\right):\mathbb{R} \rightarrow \mathbb{R}$ stands for the sawtooth function defined by 
$$\left(\left(x\right)\right)
:=
\begin{cases}
x-\lfloor x\rfloor - 1/2& \text{ if } x \in \mathbb{R} \backslash \mathbb{Z},\\
0 &\text{ if } x \in \mathbb{Z}.
\end{cases}
$$
In order to prove Theorem \ref{asympd0}, 
we need general bounds on Dedekind sums depending on the multiplicative order of the argument. 
This is a new type of bounds for Dedekind sums 
and the following result that improves upon \eqref{boundscd} when the order is $o\left(\frac{\log p}{\log\log p}\right)$ 
might be of independent interest (see also Conjecture \ref{conjDedekind} for further discussions).

\noindent\frame{\vbox{
\begin{theorem}\label{indivbound}
Let $p>1$ be a prime integer 
and assume that $h$ has odd order $k\geq 3$ in the multiplicative group $({\mathbb Z}/p{\mathbb Z})^*$. 
We have
$$\vert s(h,p)\vert \ll (\log p)^2 p^{1-\frac{1}{\phi(k)}}.$$
\end{theorem}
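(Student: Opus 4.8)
The plan is to read the Dedekind sum through its sawtooth expression
$s(h,p)=\sum_{a=1}^{p-1}\left(\left(\tfrac{a}{p}\right)\right)\left(\left(\tfrac{ah}{p}\right)\right)$
as a weighted equidistribution defect of the planar point set $P_a=(a/p,\{ah/p\})$, $1\le a\le p-1$, in $[0,1)^2$. Since the sawtooth has mean zero, the function $g(x,y)=((x))((y))$ integrates to $0$ over the unit square and is of bounded Hardy--Krause variation, so the Koksma--Hlawka inequality gives $|s(h,p)|=\left|\sum_a g(P_a)\right|\ll p\,D_N^\ast$, where $D_N^\ast$ is the star-discrepancy of $(P_a)$ and $N=p-1$. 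Everything is thus reduced to proving $D_N^\ast\ll(\log p)^2\,p^{-1/\phi(k)}$.

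To estimate the discrepancy I would invoke the Erd\H{o}s--Tur\'an--Koksma inequality in dimension two, which bounds $D_N^\ast$ by $\tfrac1Q+\tfrac1N\sum_{0<\|\mathbf m\|_\infty\le Q}\tfrac{|E(\mathbf m)|}{\max(1,|m_1|)\max(1,|m_2|)}$, where $E(\mathbf m)=\sum_{a=1}^{p-1}\exp(2\pi i a(m_1+m_2h)/p)$. This complete character sum is trivial to evaluate: $E(\mathbf m)=p-1$ when $m_1+m_2h\equiv 0\pmod p$ (a \emph{resonance}) and $E(\mathbf m)=-1$ otherwise. Hence the whole estimate hinges on locating the resonances inside the box $\|\mathbf m\|_\infty\le Q$.

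The key new input, and the place where the multiplicative order enters, is a Diophantine lower bound for resonance vectors. If $(m_1,m_2)\neq(0,0)$ satisfies $m_1+m_2h\equiv 0\pmod p$ with $0<\|\mathbf m\|_\infty<p$, then necessarily $m_1,m_2\neq 0$, so $h\equiv -m_1/m_2\pmod p$ and the homogenized cyclotomic polynomial $\widetilde\Phi_k(X,Y)=Y^{\phi(k)}\Phi_k(X/Y)=\prod_{\zeta}(X-\zeta Y)$ (product over primitive $k$-th roots of unity) satisfies $\widetilde\Phi_k(-m_1,m_2)\equiv 0\pmod p$, because $h$ has order exactly $k$. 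For $k\ge 3$ the polynomial $\Phi_k$ has no rational root, so $\widetilde\Phi_k(-m_1,m_2)$ is a \emph{nonzero} integer divisible by $p$; combined with the trivial bound $|\widetilde\Phi_k(-m_1,m_2)|=\prod_{\zeta}|m_1+m_2\zeta|\le\bigl(2\max(|m_1|,|m_2|)\bigr)^{\phi(k)}$, this forces $\max(|m_1|,|m_2|)\ge\tfrac12\,p^{1/\phi(k)}$. The box is therefore resonance-free as soon as $Q<\tfrac12 p^{1/\phi(k)}$.

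Choosing $Q\asymp p^{1/\phi(k)}$ just below this threshold annihilates every resonance, so $|E(\mathbf m)|=1$ throughout and the Erd\H{o}s--Tur\'an--Koksma bound collapses to $D_N^\ast\ll \tfrac1Q+\tfrac{(\log Q)^2}{N}\ll p^{-1/\phi(k)}+\tfrac{(\log p)^2}{p}$; multiplying by $p$ yields $|s(h,p)|\ll p^{1-1/\phi(k)}+(\log p)^2$, which is absorbed into the stated estimate. The main obstacle is exactly the resonance lemma of the third paragraph: converting the hypothesis ``$h$ has order $k$'' into a quantitative lower bound for the two-dimensional lattice relations $m_1+m_2h\equiv 0$, where the cyclotomic polynomial supplies both the degree $\phi(k)$ responsible for the exponent and the nonvanishing that makes the divisibility effective. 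A secondary technical point is to carry all constants uniformly in $k$ (using $\prod_{\zeta}|m_1+m_2\zeta|\le(2\max)^{\phi(k)}$, so that the relevant constant raised to the power $1/\phi(k)$ stays $O(1)$ even as $k$ grows) and to justify the passage through the discontinuous sawtooth function, which is where the logarithmic factors in the final bound originate.
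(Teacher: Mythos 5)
Your proposal is correct and takes essentially the same route as the paper: write $s(h,p)$ as a sum of the mean-zero, bounded-variation function $((x))((y))$ over the point set $\left\{(a/p,\{ah/p\})\right\}$, apply Koksma--Hlawka, and bound the discrepancy of that point set by $(\log p)^2p^{-1/\phi(k)}$. The only difference is one of self-containedness: the paper obtains the discrepancy estimate (Lemma \ref{discrepancy}) by citing the proof of \cite[Theorem 4.1]{LMQJM} (Erd\H{o}s--Tur\'an inequality plus tools from pseudo-random generator theory), whereas you reprove it directly via Erd\H{o}s--Tur\'an--Koksma together with your cyclotomic resonance lemma, which correctly converts the hypothesis that $h$ has order $k\geq 3$ into the lower bound $\max(|m_1|,|m_2|)\gg p^{1/\phi(k)}$ for nonzero solutions of $m_1+m_2h\equiv 0\pmod p$ --- precisely the arithmetic input hidden in the citation.
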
}}
 
\begin{Remark}
Let us notice that by a result of Vardi \cite{Var}, for any function $f$ such that $\lim_{n\to +\infty} f(n)=+\infty$ 
we have $s(c,d) \ll f(d)\log d$ for almost all $(c,d)$ with $\gcd (c,d)=1$. 
However Dedekind sums take also very large values 
(see for instance \cite{CEK,Gir03} for more information). \end{Remark}

Our proof builds from ideas of the proof of \cite[Theorem $4.1$]{LMQJM} 
where some tools from equidistribution theory and the theory of pseudo-random generators were used. 
We refer for more information to \cite{Korobov}, \cite{niederreiter1977pseudo} 
or the book of Konyagin and Shparlinski \cite[Chapter $12$]{igorkonya} 
(see \cite[Section $4$]{LMQJM} for more details and references). 
Let us recall some notations. 
For any fixed integer $s$, we consider the $s$-dimensional cube $I_s=\left[0,1\right]^s$ 
equipped with its $s$-dimensional Lebesgue measure $\lambda_s$. 
We denote by $\mathcal{B}$ the set of rectangular boxes of the form 
$$\prod_{i=1}^{s}[\alpha_i,\beta_i)
=\left\{x\in I_s, \alpha_i\leq x_i <\beta_i\right\}$$ 
where $0\leq \alpha_i<\beta_i\leq 1.$ If $S$ is a finite subset of $I^s$, 
we define the discrepancy $D(S)$ by 
$$D(S)
=\sup_{B \in \mathcal{B}}\left\vert \frac{\# (B\cap S)}{\# S}-\lambda_s(B)\right\vert.$$ 
Let us introduce the following set of points: 
$$S_{h,p}
=\left\{\left(\frac{x}{p},\frac{x h}{p}\right) \in I_2, x \bmod p\right\}.$$ 
For good choice of $h$, the points are equidistributed and we expect for ``nice'' functions $f$
$$\lim_{p\to \infty} \frac{1}{p}\sum_{x \bmod p} f\left(\frac{x}{p},\frac{hx}{p}\right) = \int_{I_2} f(x,y)dx dy.$$

\begin{lemma}\label{discrepancy}
For any $h$ of odd order $k\geq 3$ we have the following discrepancy bound
$$D(S_{h,p}) 
\leq (\log p)^{2}p^{-1/\phi(k)}.$$
\end{lemma}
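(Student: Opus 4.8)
The plan is to apply the Erdős--Turán--Koksma inequality in dimension two and to reduce everything to a single family of complete exponential sums that can be evaluated exactly. Writing $e(t)=e^{2\pi i t}$ and $r(\mathbf{a})=\max(1,|a_1|)\max(1,|a_2|)$ for $\mathbf{a}=(a_1,a_2)\in\mathbb{Z}^2$, the inequality yields, for any integer $L\geq 1$,
\[
D(S_{h,p})\ll\frac{1}{L}+\sum_{0<|\mathbf{a}|_\infty\leq L}\frac{1}{r(\mathbf{a})}\left|\frac{1}{p}\sum_{x\bmod p}e\!\left(\frac{a_1x+a_2hx}{p}\right)\right|,
\]
with an absolute implied constant. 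The inner sum over $x$ is a complete geometric sum modulo $p$, hence it equals $1$ when $a_1+a_2h\equiv0\pmod p$ and vanishes otherwise. Thus the whole problem reduces to understanding the integer solutions $(a_1,a_2)$ of the congruence $a_1+a_2h\equiv0\pmod p$ lying in the box $|a_i|\leq L$, weighted by $1/r(\mathbf{a})$.

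First I would record that such solutions form a lattice $\Lambda\subset\mathbb{Z}^2$ of determinant $p$, and that no nonzero solution can have a vanishing coordinate (since $0<|a_i|\leq L<p$). The heart of the argument is then a lower bound on the first minimum of $\Lambda$, and here I would exploit that $h$ is a primitive $k$-th root of unity modulo $p$, so that $\Phi_k(h)\equiv0\pmod p$, where $\Phi_k$ is the $k$-th cyclotomic polynomial, of degree $\phi(k)$. Given a solution with $a_2\not\equiv0$, substituting $h\equiv-a_1/a_2$ and clearing denominators produces the integer
\[
N:=a_2^{\phi(k)}\,\Phi_k\!\left(-\frac{a_1}{a_2}\right)=\sum_{i=0}^{\phi(k)}c_i(-a_1)^ia_2^{\phi(k)-i}\equiv0\pmod p,
\]
where the $c_i$ are the (integer) coefficients of $\Phi_k$. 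Crucially, for $k\geq3$ the polynomial $\Phi_k$ has no rational root (its roots are the non-real numbers $e^{2\pi ij/k}$), so $N\neq0$; combined with $p\mid N$ and $|N|\ll_k L^{\phi(k)}$ this forces $\max(|a_1|,|a_2|)\gg p^{1/\phi(k)}$, i.e. every nonzero vector of $\Lambda$ has $\ell^\infty$-norm $\gg p^{1/\phi(k)}$. This is exactly where the hypothesis $k\geq3$ enters: for $k\in\{1,2\}$ one has $\Phi_k=X\mp1$, $N$ may vanish, and the argument collapses.

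With this minimum bound in hand, I would take $L$ of order $p^{1/\phi(k)}$. The main term $1/L$ then contributes $\ll p^{-1/\phi(k)}$, while the weighted sum over the nonzero lattice points in the box is estimated by harmonic summation: since each surviving $\mathbf{a}$ has $\max(|a_1|,|a_2|)\gg p^{1/\phi(k)}$ and $|a_i|\leq L$, grouping the points by the dyadic sizes of their coordinates and summing $1/(|a_1||a_2|)$ produces at most two logarithmic factors $\log L\ll\log p$, which accounts for the $(\log p)^2$ in the statement. Putting the two contributions together yields $D(S_{h,p})\ll(\log p)^2p^{-1/\phi(k)}$.

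The main obstacle I anticipate is the careful bookkeeping in this last step: controlling the dependence on $k$ through the size of the coefficients of $\Phi_k$ (which enters the implied constant in $|N|\ll_k L^{\phi(k)}$ and hence the precise admissible threshold for $L$), and verifying that the harmonic sum over the lattice points genuinely collapses to the clean factor $(\log p)^2$ rather than to something larger. Quantifying the geometry of $\Lambda$ via Minkowski's theorem, so that $\lambda_1\lambda_2\asymp\det\Lambda=p$ and the second successive minimum is $\asymp p^{1-1/\phi(k)}$, is what makes the weighted count over solutions genuinely negligible compared with the main term $1/L$.
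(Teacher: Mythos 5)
Your proposal is correct and takes essentially the same route as the paper: the paper's own proof is a citation to the proof of \cite[Theorem 4.1]{LMQJM}, which is precisely your combination of the Erd\H{o}s--Tur\'an--Koksma inequality (reducing the discrepancy to counting small solutions of $a_1+a_2h\equiv 0\pmod p$) with the pseudo-random-generator-theory fact that any such nonzero solution satisfies $\max(|a_1|,|a_2|)\gg p^{1/\phi(k)}$, proved exactly as you do via the nonvanishing integer $a_2^{\phi(k)}\Phi_k(-a_1/a_2)\equiv 0\pmod p$. The one detail you flag as an obstacle, uniformity in $k$ of the coefficients of $\Phi_k$, is handled by the trivial bound $|c_i|\leq\binom{\phi(k)}{i}$, which gives $\sum_i|c_i|\leq 2^{\phi(k)}$ and hence $\max(|a_1|,|a_2|)\geq\tfrac{1}{2}p^{1/\phi(k)}$ with an absolute constant.
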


\begin{proof}
It follows from the proof of \cite[Theorem 4.1]{LMQJM} where the bound was obtained 
as a consequence of Erd\H{o}s-Turan inequality and tools from pseudo random generators theory.
\end{proof}

\subsubsection{Proof of Theorem \ref{indivbound}} 
Observe that $$s(h,p)= \sum_{x \bmod p} f\left(\frac{x}{p},\frac{hx}{p}\right)$$ where $f(x,y)= ((x))((y))$.
By Koksma-Hlawka inequality \cite[Theorem $1.14$]{tichy} we have 
$$\left\vert
\frac{1}{p} \sum_{ x \bmod p} f\left(\frac{x}{p},\frac{x h}{p}\right) 
-\int_{I_{2}} f(u,v)dudv
\right\vert 
\leq V(f)D(S_{h,p})$$ where $V(f)$ is the Hardy-Krause variation of $f$. Moreover we have 
$$\int_{I_{2}} f(u,v)dudv 
=0.$$ 
The readers can easily convince themselves that $V(f) \ll 1$. Hence the result follows from Lemma \ref{discrepancy}.

\begin{Remark}\label{compositebound}
The same method used to bound the discrepancy leads to a similar bound for composite $f$. 
Indeed for $h \in ({\mathbb Z}/f{\mathbb Z})^*$ of order $k\geq 3$, 
we have
$s(h,f)= O\left((\log f)^2 f/E(f)\right)$ with $E(f)=\max\{P^{+}(f)^{1/\phi(k^*)},\textrm{rad}(f)^{1/k}\}$ 
where $P^{+}(f)$ is the largest prime factor of $f$, $k^*$ is the order of $h$ modulo $P^+(f)$ 
and $\displaystyle{rad(f)=\prod_{\ell \mid f \atop \ell \textrm{prime}}\ell}$ is the radical of $f$. If $f=h^3-1$ is squarefree, 
then we have $E(f) = f^{1/3}$ and $s(h,f)=O\left((\log f)^2 f^{2/3}\right)$ 
which is close to the truth by a logarithmic factor 
(see Remark \ref{dedekindtwosizes}).
\end{Remark}

For $\gcd(b,p)=\gcd(c,p)=1$ we recall the other definition of Dedekind-Rademacher sums
$$s(b,c,p)= \sum_{a=1}^{p-1} \left(\left(\frac{ab}{p}\right)\right) \left(\left(\frac{ac}{p}\right)\right).$$ 
A similar argument as in the proof of Theorem \ref{indivbound} leads to a bound on these generalized sums:
\begin{theorem}
\label{indivboundgen}
Let $q_1$, $q_2$ and $k\geq 3$ be given natural integers. 
Let $p$ run over the primes 
and $h$ over the elements of order $k$ in the multiplicative group $({\mathbb Z}/p{\mathbb Z})^*$. 
Then, we have
$$\vert s(q_1,q_2 h,p)\vert \ll (\log p)^2 p^{1-\frac{1}{\phi(k)}}.$$ 
\end{theorem}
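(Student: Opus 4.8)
The plan is to mirror the proof of Theorem~\ref{indivbound}, the only genuinely new ingredient being a discrepancy estimate for a point set twisted by the fixed multipliers $q_1,q_2$. For $p>q_1q_2$ we have $\gcd(q_1,p)=\gcd(q_2h,p)=1$, so the sawtooth expression for the Dedekind--Rademacher sum recorded just before the statement gives
$$s(q_1,q_2h,p)=\sum_{a\bmod p}f\left(\frac{q_1a}{p},\frac{q_2ha}{p}\right),\qquad f(x,y)=((x))((y)),$$
the term $a\equiv0$ contributing $0$. Accordingly I introduce the point set
$$S_{q_1,q_2,h,p}=\left\{\left(\left\{\frac{q_1a}{p}\right\},\left\{\frac{q_2ha}{p}\right\}\right)\in I_2;\ a\bmod p\right\}.$$
Exactly as for Theorem~\ref{indivbound} one has $V(f)\ll1$ and $\int_{I_2}f\,\mathrm{d}\lambda_2=0$, so the Koksma--Hlawka inequality \cite[Theorem~1.14]{tichy} yields
$$|s(q_1,q_2h,p)|\le p\,V(f)\,D(S_{q_1,q_2,h,p})\ll p\,D(S_{q_1,q_2,h,p}).$$
It therefore suffices to prove $D(S_{q_1,q_2,h,p})\ll(\log p)^2p^{-1/\phi(k)}$, i.e.\ the analogue of Lemma~\ref{discrepancy}.

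The heart of the argument is to rerun the Erd\H{o}s--Turán--Koksma inequality used in Lemma~\ref{discrepancy}. Writing $e_p(t)=\exp(2\pi\mathrm{i}t/p)$, the Weyl sum attached to a frequency $(m_1,m_2)\in\mathbb{Z}^2$ is
$$\frac1p\sum_{a\bmod p}e_p\bigl((m_1q_1+m_2q_2h)a\bigr)=\begin{cases}1&\text{if }m_1q_1+m_2q_2h\equiv0\pmod p,\\0&\text{otherwise.}\end{cases}$$
Hence only the frequencies with $m_1q_1+m_2q_2h\equiv0\pmod p$ contribute, and the whole point is to show that no short nonzero frequency does so. Here I would use that $h$ has order exactly $k$, hence is a root modulo $p$ of the cyclotomic polynomial $\Phi_k$, of degree $\phi(k)$. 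If $(m_1,m_2)\neq(0,0)$ satisfies the congruence with $0<\max(|m_1|,|m_2|)\le M<p/\max(q_1,q_2)$, then both $m_1,m_2$ are nonzero (if $m_2=0$ then $m_1q_1\equiv0$ forces $m_1=0$ since $\gcd(q_1,p)=1$, and symmetrically using $\gcd(q_2h,p)=1$), so $h$ is also a root modulo $p$ of the genuine linear form $m_2q_2X+m_1q_1$. Consequently
$$p\mid\operatorname{Res}\bigl(\Phi_k,\ m_2q_2X+m_1q_1\bigr),$$
and this resultant is a \emph{nonzero} integer because $\Phi_k$ is irreducible over $\mathbb{Q}$ with only non-rational roots, whereas $m_2q_2X+m_1q_1$ has a rational root. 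Since $|\operatorname{Res}(\Phi_k,m_2q_2X+m_1q_1)|\ll_{q_1,q_2,k}\max(|m_1|,|m_2|)^{\phi(k)}$, divisibility by $p$ forces $\max(|m_1|,|m_2|)\gg p^{1/\phi(k)}$, with implied constant depending only on the fixed data $q_1,q_2,k$.

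Feeding this into the Erd\H{o}s--Turán--Koksma inequality with cutoff $M\asymp p^{1/\phi(k)}$ eliminates every contributing frequency below $M$ and yields $D(S_{q_1,q_2,h,p})\ll(\log p)^2p^{-1/\phi(k)}$, just as in Lemma~\ref{discrepancy}; the two logarithmic factors arise from the two one-dimensional frequency sums. Combining this with the Koksma--Hlawka bound gives the claimed estimate. The one genuinely new point, and the step I would be most careful about, is the resultant/irreducibility computation: it is what guarantees that inserting the fixed multipliers $q_1,q_2$ merely shifts the short-frequency threshold by a constant factor. Because $q_1,q_2$ are fixed while $h$ still has order $k$, the governing exponent remains $\phi(k)$ and the estimate is unchanged up to the implied constant, everything else being a verbatim transcription of the proof of Theorem~\ref{indivbound}.
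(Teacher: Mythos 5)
Your proof is correct, but it diverges from the paper's argument at the key step. The paper handles the twist by $q_1,q_2$ on the \emph{function} side: it keeps the point set $S_{h,p}=\left\{\left(\frac{x}{p},\frac{hx}{p}\right)\right\}$ untouched, replaces $f(x,y)=((x))\,((y))$ by $g(x,y)=((q_1x))\,((q_2y))$, notes that $\int_{I_2}g=0$ by symmetry and $V(g)\ll_{q_1,q_2}1$, and then quotes Lemma \ref{discrepancy} verbatim --- a two-line reduction in which no new discrepancy estimate is ever needed. You instead push $q_1,q_2$ into the \emph{point set} $S_{q_1,q_2,h,p}$, which means Lemma \ref{discrepancy} no longer applies as stated, and you must re-establish the discrepancy bound; you do this correctly via Erd\H{o}s--Tur\'an--Koksma together with your resultant argument: any nonzero frequency $(m_1,m_2)$ with $m_1q_1+m_2q_2h\equiv 0\pmod p$ must have both coordinates nonzero (by coprimality, once $\max(|m_i|)<p/\max(q_1,q_2)$), whence $p\mid\operatorname{Res}\bigl(\Phi_k,\,m_2q_2X+m_1q_1\bigr)$, a nonzero integer (irreducibility of $\Phi_k$, $\phi(k)\geq 2$) of size $\ll_{q_1,q_2,k}\max(|m_i|)^{\phi(k)}$, forcing $\max(|m_i|)\gg p^{1/\phi(k)}$. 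In effect you have re-proved a twisted version of Lemma \ref{discrepancy} --- essentially the mechanism inside the cited proof of \cite[Theorem 4.1]{LMQJM} --- rather than reusing it as a black box. What each route buys: the paper's is shorter and leans on existing work; yours is self-contained and in fact yields a slightly stronger estimate, since every nonzero frequency below the cutoff $M\asymp p^{1/\phi(k)}$ has \emph{vanishing} Weyl sum, so Erd\H{o}s--Tur\'an--Koksma gives $D(S_{q_1,q_2,h,p})\ll_{q_1,q_2,k}1/M\ll p^{-1/\phi(k)}$ with no logarithmic loss. Your remark that ``the two logarithmic factors arise from the two one-dimensional frequency sums'' is the one inaccuracy: in your argument no frequency terms survive the cutoff, so no logarithms arise at all; the claimed bound $(\log p)^2p^{1-1/\phi(k)}$ is of course still valid, merely weaker than what you actually proved.
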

 
\begin{proof}
The proof follows exactly the same lines as the proof of Theorem \ref{indivbound} 
except for the fact that the function $f$ is replaced by the function $g(x,y)= ((q_1 x))((q_2 y))$. 
Hence we have $$s(q_1,q_2 h,p)= g\left(\frac{x}{p},\frac{hx}{p}\right)$$ and by symmetry we remark that
$$\int_{I_{2}} g(u,v)dudv 
=0.$$ 
Again $V(g) \ll 1$ and the result follows from Lemma \ref{discrepancy} and Koksma-Hlawka inequality.
\end{proof}

\subsection{Twisted second moment of $L$- functions and Dedekind sums}\label{twistedsection}
We illustrate the link between Dedekind sums and twisted moments of $L$- functions 
by first proving Theorem \ref{asympd0} in the case $H=\{1\}$ with a stronger error term. 
For any integers $q_1,q_2 \geq 1$ and any prime $p\geq 3$, we define the twisted moment
\begin{equation}\label{twistedmomentrivial} 
M_{q_1,q_2}(p)
:=\frac{2}{\phi(p)} \sum_{\chi \in X_p^-}\chi(q_1)\overline{\chi}(q_2)\vert L(1,\chi)\vert^2.
\end{equation}

The following formula (see \cite[Proposition 1]{LouCMB36/37}) will help us to relate $L$- functions to Dedekind sums:
\begin{equation}\label{formulaL1X}
L(1,\chi)
=\frac{\pi}{2f}\sum_{a=1}^{f-1}\chi (a)\cot\left (\frac{\pi a}{f}\right )
\ \ \ \ \ (\chi\in X_f^-).\end{equation}

\begin{theorem}\label{twistedmoment}
Let $q_1$ and $q_2$ be given coprime integers. Then when $p$ goes to infinity
$$ M_{q_1,q_2}(p)= \frac{\pi^2}{6q_1q_2}+O_{q_1,q_2}(1/p). $$
\end{theorem}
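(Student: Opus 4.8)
The plan is to start from the cotangent representation \eqref{formulaL1X} of $L(1,\chi)$ and reduce the twisted second moment to a single Dedekind--Rademacher sum, which the reciprocity law then evaluates asymptotically. First I would insert \eqref{formulaL1X} with $f=p$ into \eqref{twistedmomentrivial}, expanding
$$\chi(q_1)\overline{\chi}(q_2)\vert L(1,\chi)\vert^2
=\frac{\pi^2}{4p^2}\sum_{a,b=1}^{p-1}\chi(q_1a)\overline{\chi}(q_2b)\cot\left(\frac{\pi a}{p}\right)\cot\left(\frac{\pi b}{p}\right),$$
and then sum over $\chi\in X_p^-$. Writing the projection $\frac{1-\chi(-1)}{2}$ onto odd characters and applying the orthogonality relations for all characters modulo $p$, the inner character sum $\sum_{\chi\in X_p^-}\chi(q_1a)\overline{\chi}(q_2b)$ collapses to $\frac{\phi(p)}{2}$ times the indicator of $q_1a\equiv q_2b$ minus the indicator of $q_1a\equiv -q_2b\pmod p$ (legitimate since $p\nmid q_1q_2$ for $p$ large). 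The prefactor $\frac{2}{\phi(p)}$ in \eqref{twistedmomentrivial} cancels, leaving $M_{q_1,q_2}(p)=\frac{\pi^2}{4p^2}(\Sigma_+-\Sigma_-)$, where $\Sigma_\pm$ are the cotangent double sums restricted to $q_1a\equiv \pm q_2b$.

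Next I would evaluate $\Sigma_\pm$. Parametrizing the solutions of $q_1a\equiv \pm q_2b$ by $a\equiv q_2 n$, $b\equiv \pm q_1 n$ as $n$ runs over the nonzero residues, and using the $\pi$-periodicity and the oddness of the cotangent, both sums become multiples of the Dedekind--Rademacher sum: $\Sigma_+=4p\,s(q_1,q_2,p)$ and $\Sigma_-=-4p\,s(q_1,q_2,p)$. Hence $\Sigma_+-\Sigma_-=8p\,s(q_1,q_2,p)$ and
$$M_{q_1,q_2}(p)=\frac{2\pi^2}{p}\,s(q_1,q_2,p).$$
All that remains is to estimate $s(q_1,q_2,p)$.

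For this last step I would apply the reciprocity law \eqref{sbcddcb} with $(b,c,d)=(q_1,q_2,p)$, whose hypotheses $\gcd(q_1,q_2)=\gcd(q_2,p)=\gcd(p,q_1)=1$ hold because $q_1,q_2$ are coprime and $p$ is a large prime. This gives $s(q_1,q_2,p)=\frac{q_1^2+q_2^2+p^2-3q_1q_2p}{12q_1q_2p}-s(p,q_1,q_2)-s(q_2,p,q_1)$. The two auxiliary sums have moduli $q_2$ and $q_1$, so by \eqref{boundscd} they are $O_{q_1,q_2}(1)$, while the rational main term equals $\frac{p}{12q_1q_2}-\frac14+O_{q_1,q_2}(1/p)$. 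Therefore $s(q_1,q_2,p)=\frac{p}{12q_1q_2}+O_{q_1,q_2}(1)$, and substituting into the displayed identity yields $M_{q_1,q_2}(p)=\frac{\pi^2}{6q_1q_2}+O_{q_1,q_2}(1/p)$, as claimed. The argument is essentially routine once the reciprocity law is in hand; the only points requiring care are the character-orthogonality bookkeeping that produces the two congruence conditions and the correct identification of the constrained cotangent sums with $s(q_1,q_2,p)$ --- in particular the sign coming from the oddness of the cotangent in $\Sigma_-$. I expect no genuine obstacle beyond this bookkeeping, since \eqref{sbcddcb} does all the analytic heavy lifting.
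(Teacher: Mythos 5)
Your proof is correct and follows essentially the same route as the paper's: both insert the cotangent formula \eqref{formulaL1X} into \eqref{twistedmomentrivial}, use orthogonality over $X_p^-$ to reduce the double sum to the identity $M_{q_1,q_2}(p)=\frac{2\pi^2}{p}\,s(q_1,q_2,p)$, and then conclude via the reciprocity law \eqref{sbcddcb} together with the trivial bound \eqref{boundscd}, which give $s(q_1,q_2,p)=\frac{p}{12q_1q_2}+O_{q_1,q_2}(1)$. Your explicit parametrization of the congruence conditions and the sign bookkeeping for $\Sigma_-$ is just a spelled-out version of the paper's $\epsilon(q_1a,q_2b)$ computation.
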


\begin{Remark}
 It is worth to notice that in the case $q_2=1$, explicit formulas are known by \cite[Theorem $4$]{LouBKMS52} 
(see also \cite{Lee17}). 
This also gives a new and simpler proof of \cite[Theorem $1.1$]{Lee19} in a special case.
\end{Remark}
 
\begin{proof}
Let us define
$$\epsilon(a,b)
:=\frac{2}{\phi(p)}\sum_{\chi \in X_p^-}\chi(a)\overline{\chi}(b)
=\begin{cases} 
1&\text{ if } p\nmid ab \text{ and } a=b \bmod p, \\
-1&\text{ if } p\nmid ab \text{ and } a=-b \bmod p, \\
0 &\text{ otherwise.}
\end{cases}$$ 
For $p$ large enough, we have $\gcd(q_1,p)=\gcd(q_2,p)=1$. Hnece, using orthogonality relations and \eqref{formulaL1X} we arrive at 
\begin{align*} 
M_{q_1,q_2}(p)
&=\frac{\pi^2}{4p^2} 
\sum_{a=1}^{p-1} 
\sum_{b=1}^{p-1}\epsilon(q_1 a,q_2 b)\cot\left(\frac{\pi a}{p}\right)\cot\left(\frac{\pi b}{p}\right) \\
& = \frac{\pi^2}{2p^2} \sum_{a=1}^{p-1}\cot\left(\frac{\pi q_1a}{p}\right)\cot\left(\frac{\pi q_2 a}{p}\right)
= \frac{2\pi^2}{p}s(q_1,q_2,p).
\end{align*} 
When $q_1$ and $q_2$ are fixed coprime integers and $p$ goes to infinity, 
we infer from \eqref{sbcddcb} and \eqref{boundscd} that 
$$s(q_1,q_2,p)= \frac{p}{12q_1 q_2}+O(1). $$
The result follows immediatly.
\end{proof}

\begin{corollary}\label{twistednoncoprime}
Let $q_1$ and $q_2$ be given natural integers. Then when $p$ goes to infinity
$$ M_{q_1,q_2}(p
)=\frac{\pi^2}{6}\frac{ \gcd(q_1,q_2)^2}{q_1q_2}+O_{q_1,q_2}(1/p). $$
\end{corollary}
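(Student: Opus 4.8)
The plan is to reduce the general case to the coprime case handled by Theorem \ref{twistedmoment}. Write $g=\gcd(q_1,q_2)$ and set $q_1=g r_1$, $q_2=g r_2$ with $\gcd(r_1,r_2)=1$. The key observation is that for any odd character $\chi$ modulo $p$ and any $p$ large enough that $p\nmid g$, we have $\chi(q_1)\overline{\chi}(q_2)=\chi(gr_1)\overline{\chi}(gr_2)=\chi(g)\overline{\chi}(g)\chi(r_1)\overline{\chi}(r_2)=\vert\chi(g)\vert^2\chi(r_1)\overline{\chi}(r_2)=\chi(r_1)\overline{\chi}(r_2)$, since $\vert\chi(g)\vert=1$ whenever $\gcd(g,p)=1$.

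Therefore, for all sufficiently large $p$, the defining sum \eqref{twistedmomentrivial} for $M_{q_1,q_2}(p)$ coincides termwise with that for $M_{r_1,r_2}(p)$, so that
\begin{equation*}
M_{q_1,q_2}(p)=M_{r_1,r_2}(p).
\end{equation*}
Since $\gcd(r_1,r_2)=1$, Theorem \ref{twistedmoment} applies to the right-hand side and yields
\begin{equation*}
M_{q_1,q_2}(p)=M_{r_1,r_2}(p)=\frac{\pi^2}{6r_1r_2}+O_{r_1,r_2}(1/p).
\end{equation*}

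Finally I would rewrite the main term in terms of the original data: from $r_1r_2=(q_1/g)(q_2/g)=q_1q_2/g^2$ we get $\frac{1}{r_1r_2}=\frac{g^2}{q_1q_2}=\frac{\gcd(q_1,q_2)^2}{q_1q_2}$, which gives exactly the claimed constant $\frac{\pi^2}{6}\frac{\gcd(q_1,q_2)^2}{q_1q_2}$. The error term $O_{r_1,r_2}(1/p)$ is $O_{q_1,q_2}(1/p)$ since $r_1,r_2$ are determined by $q_1,q_2$. The only mild subtlety, rather than a genuine obstacle, is that the identity $M_{q_1,q_2}(p)=M_{r_1,r_2}(p)$ holds only once $p$ exceeds $g$ (so that $\gcd(g,p)=1$ and $\chi(g)$ is a unit), but this excludes merely finitely many primes and is harmless for an asymptotic statement as $p\to\infty$.
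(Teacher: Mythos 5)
Your proof is correct and is essentially the paper's own argument: the paper likewise reduces to the coprime case via the identity $M_{q_1,q_2}(p)=M_{q_1/\delta,q_2/\delta}(p)$ with $\delta=\gcd(q_1,q_2)$ (stated there as ``clearly'', which you justify by $\vert\chi(\delta)\vert^2=1$ for $p\nmid\delta$) and then applies Theorem \ref{twistedmoment}. No gaps; your explicit handling of the finitely many excluded primes is a fine touch but not a substantive difference.
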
 

\begin{proof} 
Let $\delta= \gcd(q_1,q_2)$. 
We clearly have $M_{q_1,q_2}(p)=M_{q_1/\delta,q_2/\delta}(p)$ 
and the result follows from Theorem \ref{twistedmoment}.
\end{proof}

The proof of Theorem \ref{asympd0} in the case of the trivial subgroup follows easily.

\begin{corollary}\label{theotrivialcase}
Let $d_0$ be a given square-free integer. 
When $p$ goes to infinity, we have the following asymptotic formula 
$$M_{d_0}(p,\{1\})
 =\frac{\pi^2}{6}\prod_{q \mid d_0} \left(1-\frac{1}{q^2}\right)+ O(1/p).$$
\end{corollary}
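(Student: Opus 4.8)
The plan is to reduce Corollary~\ref{theotrivialcase} to the twisted-moment computation of Corollary~\ref{twistednoncoprime} by expanding the product in \eqref{L1XXprime}. First I would recall that for $\chi$ modulo $p$ with $\gcd(d_0,p)=1$, the induced character $\chi'$ modulo $d_0p$ satisfies
\begin{equation*}
L(1,\chi')
=L(1,\chi)\prod_{q\mid d_0}\left (1-\frac{\chi(q)}{q}\right ),
\end{equation*}
so that
\begin{equation*}
\vert L(1,\chi')\vert^2
=\vert L(1,\chi)\vert^2
\prod_{q\mid d_0}\left (1-\frac{\chi(q)}{q}\right )\left (1-\frac{\overline{\chi}(q)}{q}\right ).
\end{equation*}
Since $d_0$ is square-free, writing $d_0=\prod_q q$ and expanding the double product over the prime divisors yields a sum indexed by pairs of divisors $(e_1,e_2)$ of $d_0$: each factor contributes either $1$, $-\chi(q)/q$, $-\overline{\chi}(q)/q$, or $\chi(q)\overline{\chi}(q)/q^2$. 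Collecting terms, I would obtain
\begin{equation*}
\vert L(1,\chi')\vert^2
=\sum_{e_1\mid d_0}\sum_{e_2\mid d_0}
\frac{\mu(e_1)\mu(e_2)}{e_1e_2}\,\chi(e_1)\overline{\chi}(e_2)\vert L(1,\chi)\vert^2,
\end{equation*}
where the signs are packaged by the Möbius function because $d_0$ is square-free.

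Next I would average over $\chi\in X_p^-$ and recognize each averaged term as a twisted moment. By definition \eqref{Md0(f,H)} and \eqref{twistedmomentrivial}, since $\#X_p^-(\{1\})=\#X_p^-=\phi(p)/2$, we have
\begin{equation*}
M_{d_0}(p,\{1\})
=\sum_{e_1\mid d_0}\sum_{e_2\mid d_0}
\frac{\mu(e_1)\mu(e_2)}{e_1e_2}\,M_{e_1,e_2}(p).
\end{equation*}
Applying Corollary~\ref{twistednoncoprime} to each of the finitely many terms (the number of terms depends only on $d_0$, so the implied constants are harmless), I would substitute $M_{e_1,e_2}(p)=\frac{\pi^2}{6}\frac{\gcd(e_1,e_2)^2}{e_1e_2}+O_{e_1,e_2}(1/p)$ to get
\begin{equation*}
M_{d_0}(p,\{1\})
=\frac{\pi^2}{6}\sum_{e_1\mid d_0}\sum_{e_2\mid d_0}
\frac{\mu(e_1)\mu(e_2)\gcd(e_1,e_2)^2}{e_1^2e_2^2}
+O_{d_0}(1/p).
\end{equation*}

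The only remaining task is to evaluate the arithmetic sum and check it factors as $\prod_{q\mid d_0}(1-1/q^2)$. The cleanest route is multiplicativity: the summand is a multiplicative function of $d_0$ in the sense that the double sum over divisors of a square-free $d_0$ splits as a product over the primes $q\mid d_0$ of the corresponding local double sum over $e_1,e_2\in\{1,q\}$. For a single prime $q$ that local sum is
\begin{equation*}
1-\frac{1}{q^2}-\frac{1}{q^2}+\frac{q^2}{q^4}
=1-\frac{1}{q^2},
\end{equation*}
which is exactly the desired factor. I expect this last combinatorial simplification to be the only genuinely delicate point, and it is routine; the substantive analytic content has already been carried by Theorem~\ref{twistedmoment} via the reciprocity law \eqref{sbcddcb} for Dedekind--Rademacher sums, so assembling the pieces is straightforward.
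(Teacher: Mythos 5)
Your proposal is correct and follows essentially the same route as the paper: expand $\prod_{q\mid d_0}(1-\chi(q)/q)$ and its conjugate via M\"obius into a double sum over divisors, recognize the resulting averages as the twisted moments $M_{\delta_1,\delta_2}(p)$, apply Corollary~\ref{twistednoncoprime}, and evaluate the arithmetic double sum as $\prod_{q\mid d_0}(1-1/q^2)$. The only difference is cosmetic: you spell out the multiplicative factorization of the final sum (with the correct local factor $1-1/q^2$), which the paper leaves implicit.
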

 
\begin{proof} 
For $\chi$ modulo $p$, let $\chi'$ be the character modulo $d_0p$ induced by $\chi$. 
By \eqref{L1XXprime} and Corollary \ref{twistednoncoprime} we have 
\begin{align*} M_{d_0}(p,\{1\})& =\frac{2}{\# X_p^-} \sum_{\chi \in X_p^-} |L(1,\chi')|^2 
=\sum_{\delta_1 \mid d_0} 
\sum_{\delta_2 \mid d_0} 
\frac{\mu(\delta_1)}{\delta_1}\frac{\mu(\delta_2)}{\delta_2} M_{\delta_1,\delta_2}(p) \\
&=\frac{\pi^2}{6}\sum_{\delta_1 \mid d_0} 
\sum_{\delta_2 \mid d_0} 
\frac{\mu(\delta_1)}{\delta_1^2}
\frac{\mu(\delta_2)}{\delta_2^2}\gcd(\delta_1,\delta_2)^2 
+ O(1/p) \\
&=\frac{\pi^2}{6} \prod_{q \mid d_0} \left(1-\frac{1}{q^2}\right)+O(1/p).
\end{align*}
\end{proof}

\subsection{An interesting link with sums of maxima}
Before turning to the general case of Theorem \ref{asympd0}, 
we explain how to use Theorem \ref{twistedmoment} to estimate the seemingly
innocuous sum\footnote{In \cite{Sun} the author uses lattice point interpretation to study sums with a similar flavour.} 
defined for any integers $q_1,q_2 \geq 1$ by
$$\text{Ma}_{q_1,q_2,p}:=\sum_{x \bmod p}\max(q_1 x,q_2 x)$$
where here and below $q_1 x, q_2 x$ denote the representatives modulo $p$ taken in $\left[1,p\right]$.

\noindent\frame{\vbox{
\begin{theorem}\label{summax} 
Let $q_1$ and $q_2$ be natural integers such that $q_1 \neq q_2$. Then we have the following asymptotic formula
$$\text{Ma}_{q_1,q_2,p}
= p^2\left(\frac{2}{3}-\frac{\gcd(q_1,q_2)^2}{12q_1q_2}\right)(1+o(1)).$$ 
\end{theorem}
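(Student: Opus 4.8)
The plan is to transfer the problem to Dedekind--Rademacher sums and then quote Theorem~\ref{twistedmoment}. First I would reduce to the coprime case. Writing $g=\gcd(q_1,q_2)$ and $q_i=gq_i'$, the substitution $x\mapsto gx$ (a bijection of ${\mathbb Z}/p{\mathbb Z}$ for $p$ large enough that $p\nmid g$) shows $\text{Ma}_{q_1,q_2,p}=\text{Ma}_{q_1',q_2',p}$, while by \eqref{bcd=abacd} one has $s(q_1,q_2,p)=s(q_1',q_2',p)$ and $\gcd(q_1,q_2)^2/(q_1q_2)=1/(q_1'q_2')$. Hence it suffices to treat $\gcd(q_1,q_2)=1$ and to prove $\text{Ma}_{q_1,q_2,p}=p^2\bigl(\tfrac23-\tfrac1{12q_1q_2}\bigr)(1+o(1))$.

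Next I would linearise the maximum. Denoting by $r_i=r_i(x)\in[1,p]$ the representative of $q_ix$, I use $\max(r_1,r_2)=\tfrac12(r_1+r_2)+\tfrac12|r_1-r_2|$. As $x$ runs over a complete residue system each $r_i$ runs over $\{1,\dots,p\}$, so $\sum_{x}(r_1+r_2)=p(p+1)$ is exact and already supplies the main term $\tfrac{p^2}{2}$. Everything then reduces to the $L^1$-type sum $\Sigma:=\sum_{x}|r_1-r_2|$, and since the term $x\equiv 0$ contributes nothing to $\Sigma$ we have $\text{Ma}_{q_1,q_2,p}=\tfrac12 p(p+1)+\tfrac12\Sigma$. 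Using $r_i=p\bigl(((q_ix/p))+\tfrac12\bigr)$ for $x\not\equiv 0$, I rewrite $|r_1-r_2|=p\,\bigl|((q_1x/p))-((q_2x/p))\bigr|$.

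The heart of the argument, and the main obstacle, is to evaluate $\Sigma$ in terms of the Dedekind--Rademacher sum $s(q_1,q_2,p)=\sum_{x}((q_1x/p))((q_2x/p))$. The difficulty is that $\Sigma$ is an $L^1$ quantity whereas $s(q_1,q_2,p)$ is bilinear, so there is no pointwise identity and the link must come from global structure. I would exploit the symmetry $x\mapsto p-x$, under which $r_i\mapsto p-r_i$ and hence $r_1-r_2\mapsto-(r_1-r_2)$ (here $q_1\neq q_2$ guarantees $r_1\neq r_2$ for $x\not\equiv 0$ once $p$ is large, so the involution has no relevant fixed points). This gives $\Sigma=2\sum_{x:\,r_1>r_2}(r_1-r_2)$, twice a sum of positive parts. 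Writing $(r_1-r_2)^+$ as the number of integers $n$ with $r_2<n\le r_1$ turns this into a lattice-point count under the ``line'' $\{(q_1x,q_2x)\}$; evaluating that count by the standard floor-function manipulations, together with the reciprocity laws \eqref{scddc}--\eqref{sbcddcb}, should reproduce exactly the Dedekind sum and yield $\Sigma=\tfrac{p^2}{3}-2p\,s(q_1,q_2,p)+O(p)$ (indeed I expect the clean exact identity $\text{Ma}_{q_1,q_2,p}=\tfrac23p^2+\tfrac p4+\tfrac1{12}-p\,s(q_1,q_2,p)$ for coprime $q_1,q_2$). The careful bookkeeping of the rounding and boundary contributions is the only genuinely delicate point, but only the coefficient of $p^2$ matters for the statement.

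Finally I would invoke Theorem~\ref{twistedmoment}: from its proof $M_{q_1,q_2}(p)=\tfrac{2\pi^2}{p}s(q_1,q_2,p)$ and the estimate there we have $s(q_1,q_2,p)=\tfrac{p}{12q_1q_2}+O(1)$ for coprime $q_1,q_2$. Substituting this into the decomposition above gives $\text{Ma}_{q_1,q_2,p}=\tfrac23p^2-p\,s(q_1,q_2,p)+O(p)=p^2\bigl(\tfrac23-\tfrac1{12q_1q_2}\bigr)+O(p)$. Undoing the rescaling of the first paragraph (replacing $q_1q_2$ by $q_1q_2/\gcd(q_1,q_2)^2$) produces $p^2\bigl(\tfrac23-\tfrac{\gcd(q_1,q_2)^2}{12q_1q_2}\bigr)+O(p)$; since $q_1\neq q_2$ forces $q_1'q_2'\ge 2$, the bracketed constant is bounded away from $0$, so the $O(p)$ error is absorbed into the factor $(1+o(1))$, giving the claimed asymptotic formula.
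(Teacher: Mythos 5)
Your reductions are all correct and cleanly executed: the passage to coprime $q_1,q_2$ via $x\mapsto gx$, the linearisation $\max(r_1,r_2)=\tfrac12(r_1+r_2)+\tfrac12|r_1-r_2|$ with the exact evaluation $\sum_x(r_1+r_2)=p(p+1)$, and the final assembly using $s(q_1,q_2,p)=\tfrac{p}{12q_1q_2}+O(1)$. But there is a genuine gap at exactly the point you yourself flag as ``the heart of the argument'': the claim
$$\Sigma:=\sum_{x\bmod p}|r_1-r_2|=\frac{p^2}{3}-2p\,s(q_1,q_2,p)+O(p)$$
is never proved. You correctly observe that no pointwise identity links the $L^1$ quantity $\Sigma$ to the bilinear sum $s(q_1,q_2,p)$, and what you offer in its place --- that ``standard floor-function manipulations, together with the reciprocity laws \eqref{scddc}--\eqref{sbcddcb}, should reproduce exactly the Dedekind sum'' --- is an assertion, not an argument. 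The reciprocity laws relate Dedekind--Rademacher sums to one another; they do not by themselves convert a lattice-point count into the sum $\sum_x((q_1x/p))((q_2x/p))$. Since, granted the elementary estimate $s(q_1,q_2,p)=\tfrac{p}{12q_1q_2}+O(1)$, your displayed claim is essentially equivalent to the theorem, the proposal as written assumes what it must prove. For what it is worth, the claim appears to be true, and even your conjectured exact identity $\text{Ma}_{q_1,q_2,p}=\tfrac23p^2+\tfrac p4+\tfrac1{12}-p\,s(q_1,q_2,p)$ checks out numerically (e.g.\ for $q_1=1,q_2=2$ with general $p$, and for $(q_1,q_2,p)=(1,3,7)$, where both sides equal $35$); a completion would likely decompose the pairs $(r_1,r_2)$ along the finitely many lines $q_2u-q_1v=kp$, $-q_1<k<q_2$, sum the resulting linear functions of the line parameter explicitly, and only then invoke \eqref{sbcddcb} to recognise $s(q_1,q_2,p)$. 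But that computation carries the entire content of the theorem, and it is missing.

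You should also be aware that your plan is structurally different from the paper's proof, which never touches $|r_1-r_2|$ or any direct lattice count of this kind. The paper expands $\sum_{\chi\in X_p}\chi(q_1)\overline{\chi}(q_2)\sum_k|S(k,\chi)|^2$ in two ways: once via Proposition \ref{relation} (plus the trivial character), which brings in the twisted moment $M_{q_1,q_2}(p)$, and once via orthogonality, which produces the counting quantity $\mathcal{A}(q_1,q_2,p)$ and hence $p^2-\sum_x\max(q_1x,q_2x)$; comparing the two and quoting Corollary \ref{twistednoncoprime} finishes the proof, so the Dedekind sum enters only inside the already-proved Theorem \ref{twistedmoment}. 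Your route, if completed, would be more elementary --- no characters at all --- which is an attractive feature, but the lattice-point evaluation you have only sketched is precisely the step that the paper's character-sum identity is designed to circumvent.
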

}}

\begin{Remark} 
In the special case $q_1=1$, we are able to evaluate the sum directly without the need of Dedekind sums and $L$- functions. 
However, we could not prove Theorem \ref{summax} in the general case using elementary counting methods. 
\end{Remark}

\begin{Remark}
Let us notice that $\int_{0}^{1}\int_{0}^{1} \max(x,y) dx dy = 2/3$. 
Hence using the same method as in Section \ref{boundsDedekind}, 
we can show that if the points $\left(\left\{\frac{x}{p}\right\},\left\{\frac{qx}{p}\right\}\right)$ 
are equidistributed in the square $[0,1]^2$ 
then 
$$\sum_{x \bmod p} \max(x,qx) \sim \frac{2}{3}p^2.$$ 
For $q$ fixed and $p \rightarrow +\infty$, the points are not equidistributed in the square
and we see that the correcting factor $\frac{\gcd(q_1,q_2)^2}{12q_1q_2}$ from equidistribution 
is related to the Dedekind sum $s(q_1,q_2,p)$. 
\end{Remark}

We need the following result of \cite[Theorem $2.1$]{LMQJM}:

\begin{proposition}\label{relation}
Let $\chi$ be a primitive Dirichlet character modulo $f>2$, its conductor. 
Set $\displaystyle{S(k,\chi) =\sum_{l=0}^k\chi (l)}$.
Then 
$$\sum_{k=1}^{f-1}\vert S(k,\chi)\vert^2 
=\frac{f^2}{12}\prod_{p\mid f}\left (1-\frac{1}{p^2}\right )
+a_\chi\frac{f^2}{\pi^2}\vert L(1,\chi)\vert^2, 
\hbox { where }
a_\chi
:=\begin{cases}
0&\hbox{if $\chi (-1)=+1$,}\\
1&\hbox{if $\chi (-1)=-1$.}
\end{cases}$$
\end{proposition}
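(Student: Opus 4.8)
The plan is to compute the sum by finite Fourier analysis on $\mathbb{Z}/f\mathbb{Z}$, treating the partial sum $S(k,\chi)$ as a discrete antiderivative of $\chi$. Write $g(k):=S(k,\chi)=\sum_{l=0}^k\chi(l)$ and note that $g(0)=\chi(0)=0$ and $g(f-1)=\sum_{l=0}^{f-1}\chi(l)=0$ (as $\chi$ is nontrivial), so that $\sum_{k=1}^{f-1}|S(k,\chi)|^2=\sum_{k=0}^{f-1}|g(k)|^2$ and $g$ may be regarded as a function on $\mathbb{Z}/f\mathbb{Z}$. First I would apply Parseval's identity in the normalization $\sum_{k=0}^{f-1}|g(k)|^2=f\sum_{b=0}^{f-1}|\hat g(b)|^2$, where $\hat g(b)=\frac1f\sum_{k}g(k)e^{-2\pi i bk/f}$.

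The crucial point is that $g(k)-g(k-1)=\chi(k)$ for every $k\bmod f$ (using $\chi(0)=0$), so on the Fourier side $\hat\chi(b)=(1-e^{-2\pi i b/f})\hat g(b)$. For $b\not\equiv 0$ this gives $\hat g(b)=\hat\chi(b)/(1-e^{-2\pi i b/f})$, while the frequency $b=0$ is exceptional and must be computed by hand: $\hat g(0)=\frac1f\sum_k g(k)=-\frac1f\sum_{l=0}^{f-1}l\,\chi(l)$. Because $\chi$ is primitive, the separability of Gauss sums yields $\hat\chi(b)=\frac{\tau(\chi)}{f}\overline{\chi}(-b)$, whence $|\hat\chi(b)|^2=|\chi(b)|^2/f$ thanks to $|\tau(\chi)|^2=f$; and $|1-e^{-2\pi i b/f}|^2=4\sin^2(\pi b/f)$. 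Parseval thus splits the answer as
\[
\sum_{k=0}^{f-1}|g(k)|^2
=\underbrace{\frac14\sum_{\substack{b=1\\\gcd(b,f)=1}}^{f-1}\frac{1}{\sin^2(\pi b/f)}}_{\text{``AC'' part}}
+\underbrace{\,f\,|\hat g(0)|^2\,}_{\text{``DC'' part}}.
\]

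For the AC part I would evaluate the restricted cosecant-squared sum. Starting from the classical identity $\sum_{b=1}^{f-1}\csc^2(\pi b/f)=(f^2-1)/3$ and grouping residues by $\gcd(b,f)$, Möbius inversion turns the coprimality restriction into the Jordan totient: $\sum_{\gcd(b,f)=1}\csc^2(\pi b/f)=\frac13\sum_{d\mid f}\mu(f/d)(d^2-1)=\frac{f^2}{3}\prod_{p\mid f}(1-1/p^2)$, using $\sum_{d\mid f}\mu(f/d)=0$ for $f>1$. Multiplying by $\frac14$ produces exactly the main term $\frac{f^2}{12}\prod_{p\mid f}(1-\frac{1}{p^2})$, which is independent of the parity of $\chi$ and always present.

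The DC part carries the parity dependence. The substitution $l\mapsto f-l$ shows $\sum_l l\,\chi(l)=-\chi(-1)\sum_l l\,\chi(l)$, so this sum vanishes when $\chi(-1)=+1$ and the DC term disappears, matching $a_\chi=0$. When $\chi(-1)=-1$ one relates the linear character sum to the $L$-value: combining \eqref{formulaL1X} with the finite Fourier duality between $\cot(\pi a/f)$ and the linear function $a\mapsto a$ (equivalently, using the standard evaluation $L(1,\chi)=\pi i\,\tau(\chi)B_{1,\overline{\chi}}/f$ with $B_{1,\overline{\chi}}=\frac1f\sum_a a\,\overline{\chi}(a)$) together with $|\tau(\chi)|^2=f$ gives $\frac1f\bigl|\sum_l l\,\chi(l)\bigr|^2=\frac{f^2}{\pi^2}|L(1,\chi)|^2$, so the DC term equals $a_\chi\frac{f^2}{\pi^2}|L(1,\chi)|^2$. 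I expect this last step — pinning down the exact constant linking the Bernoulli-type sum $\sum_l l\,\chi(l)$ to $L(1,\chi)$ and verifying its consistency with \eqref{formulaL1X} — to be the main obstacle, since it is where the parity of $\chi$ and the precise normalization of the Gauss sum must be tracked carefully; the remaining steps are routine.
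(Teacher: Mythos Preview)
Your argument is correct. The discrete Parseval decomposition cleanly splits the sum into the nonzero-frequency ``AC'' piece, which via $|\hat\chi(b)|^2=|\chi(b)|^2/f$ and M\"obius inversion on the restricted cosecant sum produces exactly the Jordan-totient main term $\tfrac{f^2}{12}\prod_{p\mid f}(1-p^{-2})$, and the zero-frequency ``DC'' piece $\tfrac1f\bigl|\sum_l l\chi(l)\bigr|^2$, which vanishes for even $\chi$ and equals $\tfrac{f^2}{\pi^2}|L(1,\chi)|^2$ for odd $\chi$ by the classical evaluation $L(1,\chi)=\pi i\,\tau(\chi)B_{1,\overline\chi}/f$ together with $|\tau(\chi)|^2=f$. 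All constants are consistent; the step you flagged as the ``main obstacle'' is indeed routine once the normalization of $\tau(\chi)$ is fixed.

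There is nothing to compare against in the paper itself: Proposition~\ref{relation} is not proved here but is quoted as \cite[Theorem~2.1]{LMQJM} and used as a black box. Your write-up therefore supplies a complete, self-contained proof where the present paper supplies only a citation.
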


\subsubsection{Proof of Theorem \ref{summax}}\label{sectionsummax}
We follow a strategy similar to the proof of \cite[Corollary $2.2$]{LMQJM}. 
We denote by $\chi_0$ the trivial character.
Using Proposition \ref{relation} and recalling the definition \eqref{twistedmomentrivial} we arrive at:
$$ \sum_{\chi \in X_p \backslash \chi_0}\chi(q_1)\overline{\chi}(q_2)\sum_{k=1}^{p-1}\vert S(k,\chi)\vert^2 
= \sum_{\chi \in X_p\backslash \chi_0}\chi(q_1)\overline{\chi}(q_2)\frac{p^2-1}{12}
+\frac{p^3}{2\pi^2}M_{q_1,q_2}(p). $$ 
Adding the contribution of the trivial character 
$$ \chi_0(q_1)\overline{\chi_0}(q_2)\sum_{k=1}^{p-1}\left\vert\sum_{l=1}^k 1\right\vert^2
=\sum_{k=1}^{p-1} k^2 
=\frac{(p-1)p(2p-1)}{6},$$ 
we obtain
\begin{align}\label{shift} 
\sum_{\chi \in X_p}\chi(q_1)\overline{\chi}(q_2)\sum_{k=1}^{p-1}\vert S(k,\chi)\vert^2 
=&\sum_{\chi \in X_p}\chi(q_1)\overline{\chi}(q_2)\frac{p^2-1}{12} + \frac{(p-1)p(2p-1)}{6} \nonumber \\ 
&+\frac{p^3}{2\pi^2}M_{q_1,q_2}(p) +O(p^2). 
\end{align} 
For sufficiently large $p$, using the fact that $q_1 \neq q_2 \bmod p$ and the orthogonality relations, we have 
$$\sum_{\chi \in X_p}\chi(q_1)\overline{\chi}(q_2)\frac{p^2-1}{12}=0.$$

We now follow the method used in the proof of \cite[Theorem $4.1$]{LMQJM} (see also \cite{Elma})
with some needed changes to treat the left hand side of \eqref{shift}. Again by orthogonality, we obtain
\begin{align*} \sum_{\chi \in X_p}\chi(q_1)\overline{\chi}(q_2)\sum_{k=1}^{p-1}\vert S(k,\chi)\vert^2
=\sum_{\chi \in X_p}\chi(q_1)\overline{\chi}(q_2)\sum_{k=1}^{p-1}\left\vert\sum_{l=1}^k\chi (l)\right\vert^2 \\
=\sum_{\chi \in X_p}\sum_{k=1}^{p-1}\sum_{1\leq l_1,l_2\leq k}
\chi(q_1 l_1)\overline{\chi(q_2 l_2)} =(p-1)^2{\mathcal {A}}(q_1,q_2,p),
\end{align*}
where 
\begin{equation*}
{\cal A}(q_1,q_2,p)
=\frac{1}{p-1}
\sum_{N=1}^{p-1}\left (\sum_{1 \leq n_1,n_2 \leq N \atop q_1 n_1=q_2 n_2 \bmod p} 1\right ).
\end{equation*} 
Changing the order of summation and making the change of variables $n_1=q_2m_1$ we arrive at 
$$(p-1){\cal A}(q_1,q_2,p)
= \sum_{1\leq m_1 \leq p}(p- \max(q_1 m_1,q_2 m_1))
=p^2
- \sum_{x \bmod p}\max(q_1 x,q_2 x).$$ 
By symmetry, injecting this into \eqref{shift}, we arrive at 
\begin{align}\label{formulacp}
p^3 -p\sum_{x \bmod p}\max(q_1 x,q_2 x) 
=\frac{(p-1)p(2p-1)}{6}+ \frac{p^3}{2\pi^2}M_{q_1,q_2}(p) +o(p^3).
\end{align}
Hence comparing the terms of order $p^3$ in the above formula \eqref{formulacp} 
and using Corollary \ref{twistednoncoprime}, 
we have
$$\sum_{x \bmod p}\max(q_1 x,q_2 x) 
=c_{q_1,q_2}(p^2 +o(p^2))$$ 
where 
$$1-c_{q_1,q_2}
=\frac{1}{3}+\frac{1}{12}\frac{\gcd(q_1,q_2)^2}{q_1 q_2}.$$ 
This concludes the proof. \\
 
We know turn to the general case of Theorem 1.1. 
Let $d_0$ be a given square-free integer such that $\gcd(d_0,p)=1$. 
For $\chi$ modulo $p$, let $\chi'$ be the character modulo $d_0p$ induced by $\chi$. 
Recall that we want to show for $H$ a subgroup of $\left(\mathbb{Z}/p\mathbb{Z}\right)^*$ 
of odd order $d \ll \frac{\log p}{\log \log p}$ that
\begin{align*}
M_{d_0}(p,H)
&=\frac{1}{\# X_p^-(H)} \sum_{\chi \in X_p^-(H)} |L(1,\chi')|^2 
=(1+o(1))\frac{\pi^2}{6}\prod_{q \mid d_0}\left(1-\frac{1}{q^2}\right).
\end{align*}

\subsection{Twisted average of $L$- functions over subgroups}
For any integers $q_1,q_2 \geq 1$ and any prime $p\geq 3$, we define
$$M_{q_1,q_2}(p,H)
:=\frac{1}{\# X_p^-(H)} \sum_{\chi \in X_p^-(H)} \chi(q_1)\overline{\chi}(q_2)|L(1,\chi)|^2.$$
Our main result is the following:

\begin{theorem}\label{MTtwist} 
Let $q_1$ and $q_2$ be given coprime integers. 
When $H$ runs over the subgroups of $\left(\mathbb{Z}/p\mathbb{Z}\right)^*$ of odd order $d$, 
we have the following asymptotic formula
$$M_{q_1,q_2}(p,H) 
=\frac{\pi^2}{6q_1 q_2} + O\left( d (\log p)^2p^{-\frac{1}{\phi(d)}} \right).$$
\end{theorem}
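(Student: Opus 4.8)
The plan is to reduce the twisted moment over the subgroup $X_p^-(H)$ to an average of Dedekind–Rademacher sums, exactly mirroring the computation in Theorem~\ref{twistedmoment}, but now the orthogonality relation forces us to sum over the whole coset structure of $H$ rather than over a single pair. Concretely, I would first apply the formula \eqref{formulaL1X} to expand $|L(1,\chi)|^2 = \frac{\pi^2}{4p^2}\sum_{a,b}\chi(a)\overline{\chi}(b)\cot(\pi a/p)\cot(\pi b/p)$, insert the twist $\chi(q_1)\overline{\chi}(q_2)$, and then use the orthogonality relation for $X_p^-(H)$. By Lemma~\ref{kernel} the set $X_p^-(H)$ is of cardinality $m/2 = (p-1)/(2d)$ and $H = \cap_{\chi \in X_p^-(H)}\ker\chi$, so the analogue of $\epsilon(a,b)$ is now supported on the condition that $q_1 a$ and $q_2 b$ lie in the same coset of $H$ (or the negative coset, contributing a $-1$). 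This collapses the double sum into a single sum over $a$ together with a sum over $h \in H$: roughly $q_2 b = \pm h q_1 a \bmod p$ for some $h \in H$.

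After carrying out the orthogonality step, I expect the main term to be
$$
M_{q_1,q_2}(p,H) = \frac{\pi^2}{2p^2}\sum_{h \in H}\ \sum_{a=1}^{p-1}\cot\!\left(\frac{\pi q_1 a}{p}\right)\cot\!\left(\frac{\pi q_2 h a}{p}\right) = \frac{2\pi^2}{p}\sum_{h \in H} s(q_1, q_2 h, p),
$$
so that the problem reduces to understanding the average of the Dedekind–Rademacher sums $s(q_1, q_2 h, p)$ as $h$ runs over $H$. The element $h = 1$ (i.e.\ the identity coset) produces the main-term contribution: by the reciprocity law \eqref{sbcddcb} and the bound \eqref{boundscd} we get $s(q_1, q_2, p) = \frac{p}{12 q_1 q_2} + O(1)$, which yields $\frac{2\pi^2}{p}\cdot\frac{p}{12 q_1 q_2} = \frac{\pi^2}{6 q_1 q_2}$, matching the claimed main term. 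The remaining $d-1$ terms with $h \neq 1$ must all go into the error.

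The hard part will be bounding the off-diagonal terms $s(q_1, q_2 h, p)$ for $h \neq 1$ in $H$. This is precisely where Theorem~\ref{indivboundgen} enters: since $H$ has odd order $d$ and $h \neq 1$, the element $h$ has some order $k \geq 3$ dividing $d$, and Theorem~\ref{indivboundgen} gives $|s(q_1, q_2 h, p)| \ll (\log p)^2 p^{1 - 1/\phi(k)}$. The critical observation is that $\phi(k) \leq \phi(d)$ is \emph{not} automatic, so I would instead bound each term by the uniform estimate $p^{1 - 1/\phi(d)}$ after checking that $\phi(k) \le \phi(d)$ for $k \mid d$ (which holds since $k \mid d$ implies $\phi(k) \mid \phi(d)$, hence $\phi(k) \le \phi(d)$, giving $p^{-1/\phi(k)} \le p^{-1/\phi(d)}$). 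Summing the $d-1$ off-diagonal contributions then gives
$$
\frac{2\pi^2}{p}\sum_{\substack{h \in H \\ h \neq 1}} |s(q_1, q_2 h, p)| \ll \frac{1}{p}\cdot d \cdot (\log p)^2 p^{1 - 1/\phi(d)} = d(\log p)^2 p^{-1/\phi(d)},
$$
which is exactly the stated error term. The only subtleties I anticipate are verifying that the coset condition in the orthogonality step is correctly tracked (including the sign from the odd characters and the negative coset), and confirming that $q_1, q_2 h$ are coprime to $p$ for $p$ large so that Theorem~\ref{indivboundgen} applies to each individual sum.
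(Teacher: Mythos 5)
Your proposal is correct and follows essentially the same route as the paper's own proof: the orthogonality relation over $X_p^-(H)$ collapsing the double cotangent sum to $\frac{2\pi^2}{p}\sum_{h\in H}s(q_1,q_2h,p)$, the main term from the $h=1$ term via the reciprocity law \eqref{sbcddcb}, and Theorem \ref{indivboundgen} applied to the $h\neq 1$ terms together with the observation that $\phi(k)\mid\phi(d)$ (hence $\phi(k)\le\phi(d)$) whenever $k\mid d$. Your anticipated subtleties (the sign from the negative coset, and $\gcd(q_1q_2,p)=1$ for $p$ large) are exactly the points the paper handles implicitly, so nothing is missing.
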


\begin{proof} 
The proof follows the same lines as the proof of Theorem \ref{twistedmoment}. 
Let us define
$$\epsilon_H(a,b)
:=\frac{1}{\# X_p^-(H)}\sum_{\chi \in X_p^-(H)}\chi(a)\overline{\chi}(b)
=\begin{cases} 
1 &\text{ if } p\nmid ab \text{ and } a \in bH, \\
-1 &\text{ if } p\nmid ab \text{ and } a \in -bH, \\
0 &\text{ otherwise.}
\end{cases}$$ 
Hence we obtain similarly
\begin{align*} 
M_{q_1,q_2}(p,H)
&=\frac{\pi^2}{4p^2} 
\sum_{a=1}^{p-1} 
\sum_{b=1}^{p-1}\epsilon_H(q_1 a,q_2 b)\cot\left(\frac{\pi a}{p}\right)\cot\left(\frac{\pi b}{p}\right) \\
& = \frac{\pi^2}{2p^2}\sum_{h\in H} \sum_{a=1}^{p-1}\cot\left(\frac{\pi q_1a}{p}\right)\cot\left(\frac{\pi q_2h a}{p}\right) \\
& = \frac{2\pi^2}{p}s(q_1,q_2,p)+ O\left(p^{-1}\sum_{1\neq h\in H} s(q_1,q_2h,p)\right) \\
& = \frac{\pi^2}{6q_1q_2} + O(1/p) + O\left( \vert H\vert (\log p)^2p^{-\frac{1}{\phi(d)}} \right) \\
& =\frac{\pi^2}{6q_1q_2} + O\left( d(\log p)^2p^{-\frac{1}{\phi(d)}} \right),
\end{align*} 
where we used Theorem \ref{indivboundgen} in the last line 
and noticed that $\phi(k)$ divides $\phi(d)$ whenever $k$ divides $d$. 
\end{proof}
 
\begin{Remark}
The error term is negligible as soon as $d\leq \frac{\log p}{3(\log \log p)}$.
\end{Remark}

\noindent\frame{\vbox{ 
\begin{corollary}\label{twistednoncoprimeH}
Let $q_1$ and $ q_2$ be given integers. 
When $H$ runs over the subgroups of $\left(\mathbb{Z}/p\mathbb{Z}\right)^*$ of odd order $d$, 
we have the following asymptotic formula
$$M_{q_1,q_2}(p,H) 
= \frac{\pi^2}{6}\frac{\gcd(q_1,q_2)^2}{q_1 q_2} 
+ O\left( d (\log p)^2p^{-\frac{1}{\phi(d)}} \right).$$
\end{corollary}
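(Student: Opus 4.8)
The plan is to mirror exactly the reduction used to pass from Theorem \ref{twistedmoment} to Corollary \ref{twistednoncoprime}, deducing the non-coprime statement from the coprime Theorem \ref{MTtwist} by factoring out the greatest common divisor of the two parameters.

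First I would set $\delta = \gcd(q_1,q_2)$ and note that for all sufficiently large $p$ we have $p \nmid q_1 q_2$, so $\delta$ is a unit modulo $p$. Every $\chi \in X_p^-(H)$ is a non-principal (hence primitive, since $p$ is prime) character of the cyclic group $(\mathbb{Z}/p\mathbb{Z})^*$, so $\chi(\delta)$ is a root of unity and $\vert\chi(\delta)\vert = 1$. Writing $q_i = \delta\,(q_i/\delta)$ and using multiplicativity of $\chi$, I would record the pointwise identity
$$\chi(q_1)\overline{\chi}(q_2) = \vert\chi(\delta)\vert^2\,\chi(q_1/\delta)\overline{\chi}(q_2/\delta) = \chi(q_1/\delta)\overline{\chi}(q_2/\delta).$$
Substituting this term by term into the defining sum for $M_{q_1,q_2}(p,H)$ yields, for all large $p$,
$$M_{q_1,q_2}(p,H) = M_{q_1/\delta,\,q_2/\delta}(p,H).$$

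Since $\gcd(q_1/\delta, q_2/\delta) = 1$, I would then apply Theorem \ref{MTtwist} to the coprime pair $(q_1/\delta, q_2/\delta)$. The main term transforms as
$$\frac{\pi^2}{6\,(q_1/\delta)(q_2/\delta)} = \frac{\pi^2 \delta^2}{6\, q_1 q_2} = \frac{\pi^2}{6}\frac{\gcd(q_1,q_2)^2}{q_1 q_2},$$
while the error term $O\!\left(d(\log p)^2 p^{-1/\phi(d)}\right)$ is inherited verbatim, its implied constant depending only on $q_1/\delta, q_2/\delta$ and hence on $q_1, q_2$. This gives precisely the asserted formula.

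I expect no genuine obstacle here: the whole argument is the same bookkeeping as in Corollary \ref{twistednoncoprime}, and the single point deserving a line of justification is the normalization $\vert\chi(\delta)\vert^2 = 1$, which is immediate from the fact that the characters in $X_p^-(H)$ are non-principal characters evaluated at a unit modulo the prime $p$.
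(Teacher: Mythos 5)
Your proposal is correct and is essentially the paper's own argument: the paper states Corollary \ref{twistednoncoprimeH} without proof precisely because it follows from Theorem \ref{MTtwist} by the same gcd-factoring reduction used to deduce Corollary \ref{twistednoncoprime} from Theorem \ref{twistedmoment}, namely $M_{q_1,q_2}(p,H)=M_{q_1/\delta,q_2/\delta}(p,H)$ with $\delta=\gcd(q_1,q_2)$. Your additional justification that $\vert\chi(\delta)\vert^2=1$ (valid since $p\nmid q_1q_2$ for large $p$) correctly fills in the step the paper labels as ``clearly''.
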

}}

\subsection{Proof of Theorem \ref{asympd0}}
As in the proof of Corollary \ref{theotrivialcase} and using Corollary \ref{twistednoncoprimeH}
\begin{align*} 
M_{d_0}(p,H)
& =\frac{1}{\# X_p^-(H)} \sum_{\chi \in X_p^-(H)} |L(1,\chi')|^2 
=\sum_{\delta_1 \mid d_0} 
\sum_{\delta_2 \mid d_0} 
\frac{\mu(\delta_1)}{\delta_1}\frac{\mu(\delta_2)}{\delta_2} M_{\delta_1,\delta_2}(p,H) \\
&= \frac{\pi^2}{6}
\sum_{\delta_1 \mid d_0} 
\sum_{\delta_2 \mid d_0} 
\frac{\mu(\delta_1)}{\delta_1^2}\frac{\mu(\delta_2)}{\delta_2^2}\gcd(\delta_1,\delta_2)^2 
+ O\left( d (\log p)^2p^{-\frac{1}{\phi(d)}} \right) \\
& = \frac{\pi^2}{6} \prod_{q \mid d_0} \left(1-\frac{1}{q^2}\right) 
+ O\left( d (\log p)^2p^{-\frac{1}{\phi(d)}} \right) = (1+o(1))\frac{\pi^2}{6} \prod_{q \mid d_0}\left(1-\frac{1}{q^2}\right) 
\end{align*} 
using the condition on $d$.

\section{Explicit formulas for $M_{d_0}(f,H)$} 
Recall that by \eqref{formulaL1X}
\begin{equation*}
L(1,\chi)
=\frac{\pi}{2f}\sum_{a=1}^{f-1}\chi (a)\cot\left (\frac{\pi a}{f}\right )
\ \ \ \ \ (\chi\in X_f^-).
\end{equation*} 
Hence using the definition of Dedekind sums we obtain (see \cite[Proof of Theorem 2]{LouBPASM64})
\begin{equation}\label{formulaM(f,H)}
M(f,H)
={2\pi^2\over f}\sum_{\delta\mid f}\frac{\mu (\delta)}{\delta}\sum_{h\in H}s(h,f/\delta).
\end{equation}

\subsection{A formula for $M_{d_0}(f,\{1\})$ for $d_0=1,2,3,6$}\label{Casetrivialgroup}
The first consequence of (\ref{formulaM(f,H)}) is a short proof of \cite[Th\'eor\`emes 2 and 3]{LouCMB36/37} 
by taking $H=\{1\}$, the trivial subgroup of the multiplicative group $({\mathbb Z}/f{\mathbb Z}^*)$.
Indeed, (\ref{formulaM(f,H)}) and (\ref{s1d}) give 
$$M(f,\{1\})
={2\pi^2\over f}\sum_{\delta\mid f}\frac{\mu (\delta)}{\delta}s(1,f/\delta)
=\frac{\pi^2}{6}\sum_{\delta\mid f}\mu (\delta)\left (\frac{1}{\delta^2}-\frac{3}{\delta f}+\frac{2}{f^2}\right ).$$
The arithmetic functions $f\mapsto\sum_{\delta\mid f}\mu (\delta)\delta^k$ being multiplicative, 
we obtain (see also \cite{Qi})
\begin{equation}\label{M(f,1)}
M(f,\{1\})
={\pi^2\over 6}
\times\left\{
\prod_{q\mid f}\left (1-{1\over q^2}\right )-{3\over f}\prod_{q\mid f}\left (1-{1\over q}\right )
\right\}
\ \ \ \ \ \hbox{($f>2$).}
\end{equation}
Now, it is clear by \eqref{MM} that for $d_0$ odd and square-free and $f$ odd we have
\begin{equation*}
M_{2d_0}(f,\{1\})
=M_{d_0}(2f,\{1\}).
\end{equation*}
Hence, on applying (\ref{M(f,1)}) to $2f$ instead of $f$ we therefore obtain 
\begin{equation*}
M_2(f,\{1\})
={\pi^2\over 8}
\times\left\{
\prod_{q\mid f}\left (1-{1\over q^2}\right )-{1\over f}\prod_{q\mid f}\left (1-{1\over q}\right )
\right\}
\ \ \ \ \ \hbox{($f>2$ odd)}.
\end{equation*}

For $d_0\in\{3,6\}$, the following explicit formula holds true for any $f$ coprime with $d_0$. 
It generalizes \cite[Th\'eor\`eme 4]{LouCMB36/37} to composite moduli

\noindent\frame{\vbox{
\begin{theorem}\label{M3M6}
Let $d_0>2$ be a given square-free integer. 
Set 
$$\kappa_{d_0}
:=\frac{\pi^2}{6}
\prod_{q\mid d_0}\left (1-\frac{1}{q^2}\right )
\hbox{ and }
c
:=3\prod_{q\mid d_0}\frac{q-1}{q+1}.$$
For $n\in {\mathbb Z}$, 
set $\varepsilon(n) =+1$ if $n\equiv +1\pmod {d_0}$ 
and $\varepsilon(n) =-1$ if $n\equiv -1\pmod {d_0}$.\\
Then for $f>2$ such that all its prime divisors $q$ satisfy $q\equiv\pm 1\pmod {d_0}$ we have 
$$M_{d_0}^{}(f,\{1\})
=\kappa_{d_0}
\times\left\{
\prod_{q\mid f}\left (1-\frac{1}{q^2}\right )
-\frac{c}{f}\prod_{q\mid f}\left (1-\frac{1}{q}\right )
+\varepsilon(f)
\frac{c-1}{f}\prod_{q\mid f}\left (1-\frac{\varepsilon(q)}{q}\right )
\right\}.$$
In particular, 
for $f>2$ such that all its prime divisors $q$ satisfy $q\equiv 1\pmod {d_0}$ we have 
$$M_{d_0}^{}(f,\{1\})
=\kappa_{d_0}
\times\left\{
\prod_{q\mid f}\left (1-\frac{1}{q^2}\right )
-\frac{1}{f}\prod_{q\mid f}\left (1-\frac{1}{q}\right )
\right\}.$$
\end{theorem}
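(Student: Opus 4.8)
The plan is to reduce $M_{d_0}(f,\{1\})$ to a finite combination of Dedekind--Rademacher sums and then evaluate that combination in closed form, using both reciprocity laws together with the hypothesis on the prime divisors of $f$.

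First I would expand the twisting factor. Since $d_0$ is squarefree and $\chi$ is completely multiplicative, \eqref{L1XXprime} gives $L(1,\chi')=L(1,\chi)\sum_{\delta\mid d_0}\frac{\mu(\delta)\chi(\delta)}{\delta}$, so that
$$|L(1,\chi')|^2=|L(1,\chi)|^2\sum_{\delta_1\mid d_0}\sum_{\delta_2\mid d_0}\frac{\mu(\delta_1)\mu(\delta_2)}{\delta_1\delta_2}\chi(\delta_1)\overline\chi(\delta_2).$$
Averaging over $\chi\in X_f^-$ and setting $M_{q_1,q_2}(f,\{1\})=\frac{2}{\phi(f)}\sum_{\chi\in X_f^-}\chi(q_1)\overline\chi(q_2)|L(1,\chi)|^2$ yields $M_{d_0}(f,\{1\})=\sum_{\delta_1,\delta_2\mid d_0}\frac{\mu(\delta_1)\mu(\delta_2)}{\delta_1\delta_2}M_{\delta_1,\delta_2}(f,\{1\})$, exactly as in the proof of Corollary \ref{theotrivialcase} but over the full group $X_f^-$. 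Inserting \eqref{formulaL1X}, using $\frac{2}{\phi(f)}\sum_{\chi\in X_f^-}\chi(x)\overline\chi(y)=\mathbf 1_{x\equiv y}-\mathbf 1_{x\equiv -y}$ for $x,y$ prime to $f$, and removing the coprimality constraint on the summation variable by Möbius inversion over $\delta\mid f$, I expect the exact identity
$$M_{q_1,q_2}(f,\{1\})=\frac{2\pi^2}{f}\sum_{\delta\mid f}\frac{\mu(\delta)}{\delta}\,s(q_1,q_2,f/\delta),$$
generalizing the prime-modulus computation of Theorem \ref{twistedmoment}. Combining the two displays gives $M_{d_0}(f,\{1\})=\frac{2\pi^2}{f}\sum_{\delta\mid f}\frac{\mu(\delta)}{\delta}\Sigma(f/\delta)$, where $\Sigma(N):=\sum_{\delta_1,\delta_2\mid d_0}\frac{\mu(\delta_1)\mu(\delta_2)}{\delta_1\delta_2}s(\delta_1,\delta_2,N)$.

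The core of the argument is the exact evaluation of $\Sigma(N)$ for $N$ all of whose prime factors are $\equiv\pm1\pmod{d_0}$. I would first use \eqref{bcd=abacd} to replace each $s(\delta_1,\delta_2,N)$ by $s(a,b,N)$ with $a=\delta_1/g$, $b=\delta_2/g$, $g=\gcd(\delta_1,\delta_2)$ (legitimate since $\gcd(g,N)=1$), making $a,b,N$ pairwise coprime. The Dedekind--Rademacher reciprocity \eqref{sbcddcb} then writes $s(a,b,N)$ as the main term $\frac{a^2+b^2+N^2-3abN}{12abN}$ minus the dual sums $s(N,a,b)$ and $s(b,N,a)$, whose moduli $b$ and $a$ divide $d_0$. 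Here the hypothesis enters decisively: every prime dividing $N$ is $\equiv\pm1\pmod{d_0}$, hence $N\equiv\varepsilon(N)\pmod t$ for every $t\mid d_0$, so the dual sums collapse to $s(N,a,b)=\varepsilon(N)s(a,b)$ and $s(b,N,a)=\varepsilon(N)s(b,a)$, while ordinary reciprocity \eqref{scddc} evaluates $s(a,b)+s(b,a)=\frac{a^2+b^2-3ab+1}{12ab}$. This produces the clean per-term formula $s(a,b,N)=\frac{N}{12ab}-\frac14+\frac{a^2+b^2}{12abN}-\varepsilon(N)\frac{a^2+b^2-3ab+1}{12ab}$, valid uniformly (including the degenerate cases $a=1$ or $a=b=1$, where the modulus-one sums vanish by convention).

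Finally I would sum over $\delta_1,\delta_2\mid d_0$, using $\tfrac{a^2+b^2}{ab}=\tfrac{\delta_1^2+\delta_2^2}{\delta_1\delta_2}$ and $\tfrac{1}{ab}=\tfrac{g^2}{\delta_1\delta_2}$ to turn each piece into a multiplicative convolution over the primes $q\mid d_0$. I expect three simplifications to drive everything: the $1/N$ contribution vanishes because its coefficient factors through $\sum_{\delta\mid d_0}\mu(\delta)=0$; the $N$-coefficient equals $\sum_{\delta_1,\delta_2}\frac{\mu(\delta_1)\mu(\delta_2)\gcd(\delta_1,\delta_2)^2}{\delta_1^2\delta_2^2}=\prod_{q\mid d_0}(1-q^{-2})=:A$; and the remaining pieces assemble with $B:=\prod_{q\mid d_0}(1-q^{-1})^2$ into $\Sigma(N)=\frac{1}{12}\bigl[NA-3B+\varepsilon(N)(3B-A)\bigr]$. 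Substituting $N=f/\delta$, using $\varepsilon(f/\delta)=\varepsilon(f)\varepsilon(\delta)$ together with the multiplicativity of $\sum_{\delta\mid f}\mu(\delta)\delta^{-2}$, $\sum_{\delta\mid f}\mu(\delta)\delta^{-1}$ and $\sum_{\delta\mid f}\mu(\delta)\varepsilon(\delta)\delta^{-1}$, and recognizing $\kappa_{d_0}=\frac{\pi^2}{6}A$, $c=3B/A=3\prod_{q\mid d_0}\frac{q-1}{q+1}$ and $c-1=(3B-A)/A$, collapses the sum to the stated formula; the ``in particular'' case follows by setting $\varepsilon(q)=1$ for all $q\mid f$, forcing $\varepsilon(f)=1$ and merging the last two products. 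The main obstacle is the bookkeeping in evaluating $\Sigma(N)$: correctly handling the non-coprime pairs $(\delta_1,\delta_2)$ via \eqref{bcd=abacd}, justifying reciprocity in the degenerate cases, and verifying the three multiplicative identities—especially the vanishing of the $1/N$ term and the $\gcd$-weighted evaluation giving $A$.
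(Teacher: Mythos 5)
Your proposal is correct, and it takes a genuinely different route from the paper's. The paper disposes of Theorem \ref{M3M6} in a few lines by importing machinery from the earlier reference \cite{LouPMDebr78}: it identifies $M_{d_0}(f,\{1\})$ with $4\pi^2S(d_0,f)$, invokes Lemmas 3 and 6 of that reference to write it as a main term plus $\frac{\pi^2}{2d_0^2f}\sum_{d\mid f}\frac{\mu(d)}{d}A(d_0,f/d)$, uses the hypothesis $q\equiv\pm 1\pmod{d_0}$ to collapse this M\"obius sum into $\varepsilon(f)A(d_0,1)\prod_{q\mid f}\left(1-\frac{\varepsilon(q)}{q}\right)$, and finally substitutes the closed form $A(d_0,1)=\phi(d_0)^2-\frac{d_0^2}{3}\prod_{q\mid d_0}\left(1-\frac{1}{q^2}\right)$. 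You instead stay entirely within the toolkit of the present paper: the twisted-moment decomposition of Corollary \ref{theotrivialcase}, an exact composite-modulus version of Theorem \ref{twistedmoment} (your identity $M_{q_1,q_2}(f,\{1\})=\frac{2\pi^2}{f}\sum_{\delta\mid f}\frac{\mu(\delta)}{\delta}s(q_1,q_2,f/\delta)$, proved by orthogonality over $X_f^-$ plus M\"obius inversion exactly as \eqref{formulaM(f,H)} is proved), and the two reciprocity laws \eqref{scddc} and \eqref{sbcddcb}. The congruence hypothesis enters both arguments at the same structural point---the quantity attached to $N=f/\delta$ depends only on $N$ modulo $d_0$ and so collapses to $\pm$ a constant---but where the paper cites the value of $A(d_0,1)$, you compute the constant by hand via the three convolution identities. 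I verified the points you flagged as delicate: the per-term formula $s(a,b,N)=\frac{N}{12ab}-\frac14+\frac{a^2+b^2}{12abN}-\varepsilon(N)\frac{a^2+b^2-3ab+1}{12ab}$ is indeed uniform in the degenerate cases (the numerator $a^2+b^2-3ab+1$ vanishes at $a=b=1$, and modulus-one sums vanish by convention); your closed form for $\Sigma(N)$ is consistent even at $N=1$, where it gives $0$; the $\gcd$-weighted sum equals $\prod_{q\mid d_0}\left(1-\frac{1}{q^2}\right)$ by the local factor $1-\frac{2}{q^2}+\frac{1}{q^2}$; and the $1/N$ coefficient does factor through $\sum_{\delta\mid d_0}\mu(\delta)=0$. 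What your route buys: it is self-contained, it exposes the theorem as ``reciprocity plus the congruence hypothesis,'' and the intermediate identity is an exact (not merely asymptotic) twisted second-moment formula for arbitrary modulus, strengthening Theorem \ref{twistedmoment}. What the paper's route buys: brevity, and access to the quantities $A(d_0,m)$ for every residue $m$ modulo $d_0$, which is what Proposition \ref{H=1} requires in the general situation where the prime divisors of $f$ are unrestricted.
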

}}

\begin{proof}
With the notation of \cite[Lemma 2]{LouPMDebr78} we have 
$M_{d_0}(f,\{1\}))
=4\pi^2S(d_0,f)$.
Hence, by \cite[Lemmas 3 and 6]{LouPMDebr78} we have 
$$M_{d_0}(f,\{1\})
=\frac{\pi^2}{6}\prod_{q\mid d_0f}\left (1-\frac{1}{q^2}\right )
-\frac{\pi^2}{2}\frac{\phi(d_0)^2\phi(f)}{d_0^2f^2}
+\frac{\pi^2}{2d_0^2f}\sum_{d\mid f}\frac{\mu(d)}{d}A(d_0,f/d),$$
where the $A(d_0,f/d)$'s are rational numbers 
such that $A(d_0,f/d) =\varepsilon A(d_0,1)$ if $f/d\equiv\varepsilon\pmod {d_0}$ with $\varepsilon\in\{\pm 1\}$, 
see (\ref{Ad0f}). 
If all the prime divisors $q$ of $f$ satisfy $q\equiv\pm 1\pmod {d_0}$ 
then $f/d\equiv\varepsilon (f/d)\pmod{d_0}$ 
and 
$A(d_0,f/d)
=\varepsilon (f/d)A(d_0,1)
=\varepsilon(f)A(d_0,1)\varepsilon(d)$ 
and 
$$\sum_{d\mid f}\frac{\mu(d)}{d}A(d_0,f/d)
=\varepsilon(f)A(d_0,1)\prod_{q\mid f}\left (1-\frac{\varepsilon(q)}{q}\right ).$$
Hence we finally get 
$$M_{d_0}(f,\{1\})
=\frac{\pi^2}{6}\prod_{q\mid d_0f}\left (1-\frac{1}{q^2}\right )
-\frac{\pi^2}{2}\frac{\phi(d_0)^2\phi(f)}{d_0^2f^2}
+\frac{\pi^2}{2d_0^2f}\varepsilon(f)A(d_0,1)\prod_{q\mid f}\left (1-\frac{\varepsilon(q)}{q}\right ).$$ 
The desired formula for $M_{d_0}(f,\{1\})$ follows by using the explicit formula 
$$A(d_0,1)
=\phi(d_0)^2
-\frac{d_0^2}{3}\prod_{q\mid d_0}\left (1-\frac{1}{q^2}\right )$$ 
given in \cite[Lemma 6]{LouPMDebr78}.
\end{proof}

\subsection{A formula for $M(p,H)$}
The second immediate consequence of (\ref{formulaM(f,H)}) and (\ref{s1d}) is:
 
\noindent\frame{\vbox{
\begin{proposition}
For $f>2$ and $H$ a subgroup of the multiplicative group $({\mathbb Z}/f{\mathbb Z})^*$, 
set 
\begin{equation}\label{N(f,H)}
S'(H,f)=\sum_{1\neq h\in H}s(h,f)
\hbox{ and }
N(f,H)
:=-3+\frac{2}{f}+12S'(H,f).
\end{equation}
Then, for $p\geq 3$ a prime 
and $H$ a subgroup of odd order of the multiplicative group $({\mathbb Z}/p{\mathbb Z})^*$, we have
\begin{equation}\label{M(p,H)}
M(p,H)
=\frac{\pi^2}{6}
\left (1+\frac{N(p,H)}{p}\right )
=\frac{\pi^2}{6}\left (\left (1-\frac{1}{p}\right )\left (1-\frac{2}{p}\right )+\frac{12S'(H,p)}{p}\right ).
\end{equation}
\end{proposition}
}}

\begin{Remark}
In particular, $N(f,\{1\}) =-3+2/f$ and (\ref{M(p,H)}) implies (\ref{Mp1}). 
Notice also that $N(p,H)\in {\mathbb Z}$ for $H\neq\{1\}$, by \cite[Theorem 6]{LouBKMS56}.
Moreover, by \cite[Theorem $1.1$]{LMQJM}, the asymptotic formula 
$M(p,H) 
=\frac{\pi^2}{6}+o(1)$ holds
as $p$ tends to infinity and $H$ runs over the subgroup of $({\mathbb Z}/p{\mathbb Z})^*$ 
of odd order $d\leq \frac{\log p}{\log \log p}$. 
Hence we have $N(p,H)=o(p)$ under this restriction.
\end{Remark}

\subsection{A formula for $M_{d_0}(p,H)$}\label{sectionformulad0}
We will now derive a third consequence of (\ref{formulaM(f,H)}): 
a formula for the mean square value $M_{d_0}(f,H)$ defined in \eqref{Md0(f,H)} when $f$ is prime.

\noindent\frame{\vbox{
\begin{theorem}\label{Thgeneralexplicit}
Let $d_0>1$ be a square-free integer. 
Let $f>2$ be coprime with $d_0$.
Let $H$ be a subgroup of the multiplicative group $({\mathbb Z}/f{\mathbb Z})^*$. 
Whenever $\delta$ divides $d_0$, 
let $s_\delta:({\mathbb Z}/\delta f{\mathbb Z})^*\longrightarrow ({\mathbb Z}/f{\mathbb Z})^*$ 
be the canonical surjective morphism 
and set $H_{\delta}=s_\delta^{-1}(H)$ and $H_{\delta}' =s_\delta^{-1}(H\setminus\{1\})$. 
Define the rational number
\begin{equation}\label{defNd0fH}
N_{d_0}(f,H)
=-f
+\frac{12\mu(d_0)}{\prod_{q\mid d_0}(q^2-1)}
\sum_{\delta\mid d_0}\delta\mu (\delta)\sum_{h\in H_{d_0}}s(h,\delta f).
\end{equation}
Then, for $p\geq 3$ a prime which does not divide $d_0$ 
and $H$ a subgroup of odd order of the multiplicative group $({\mathbb Z}/p{\mathbb Z})^*$, we have 
\begin{equation}\label{formulaMd0pH}
M_{d_0}(p,H)
=\frac{2\pi^2\mu(d_0)\phi(d_0)}{d_0^2p}
\sum_{\delta\mid d_0}\frac{\delta\mu(\delta)}{\phi(\delta)}
S(H_\delta,\delta p)\end{equation}
where 
\begin{equation*} 
S(H_\delta,\delta f) 
=\sum_{h\in H_\delta}s(h,\delta f),
\end{equation*}
and
\begin{equation}\label{Md0pH}
M_{d_0}(p,H) 
=\kappa_{d_0}
\times\left (
1
+\frac{N_{d_0}(p,H)}{p}
\right ), 
\hbox{ where }
\kappa_{d_0}
:=\frac{\pi^2}{6}
\prod_{q\mid d_0}\left (1-\frac{1}{q^2}\right ).
\end{equation}
Moreover, 
\begin{align}
N_{d_0}(f,H)
&=-f
+\frac{12\mu(d_0)}{\prod_{q\mid d_0}(q+1)}
\sum_{\delta\mid d_0}\frac{\delta\mu(\delta)}{\phi(\delta)}S(H_\delta,\delta f)\label{defNd0fHbis}\\
&=N_{d_0}(f,\{1\})
+\frac{12\mu(d_0)}{\prod_{q\mid d_0}(q+1)}
\sum_{\delta\mid d_0}\frac{\delta\mu(\delta)}{\phi(\delta)}
S'(H_{\delta},\delta f)\label{defNd0fHter}
\end{align} 
where
\begin{equation*}
S'(H_{\delta},\delta f)
:=\sum_{h\in H_{\delta}'}s(h,\delta f). 
\end{equation*}
\end{theorem}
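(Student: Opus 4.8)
The plan is to start from the master formula \eqref{formulaM(f,H)}, which expresses $M(f,H)$ as a weighted sum of Dedekind sums $s(h,f/\delta)$, and apply it to the character group $X_{d_0f}^-(H_{d_0})$ rather than $X_f^-(H)$. By the identity \eqref{MM}, namely $M_{d_0}(f,H)=M(d_0f,H_{d_0})$, we may simply substitute $d_0f$ for the modulus and $H_{d_0}$ for the subgroup in \eqref{formulaM(f,H)}. This yields
\begin{equation*}
M_{d_0}(p,H)=\frac{2\pi^2}{d_0p}\sum_{D\mid d_0p}\frac{\mu(D)}{D}\sum_{h\in H_{d_0}}s(h,d_0p/D).
\end{equation*}
First I would reorganize the sum over divisors $D$ of $d_0p$. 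Since $p$ is prime and coprime to $d_0$, each such $D$ is either a divisor $\delta$ of $d_0$ or $\delta p$ with $\delta\mid d_0$. For the terms with a factor of $p$, the modulus $d_0p/D=\delta$ is bounded independently of $p$, so those Dedekind sums $s(h,\delta)$ contribute a lower-order term that I expect to cancel against part of the main sum after I exploit the structure of $H_{d_0}$ as the full preimage $s_{d_0}^{-1}(H)$. The key bookkeeping step is to pass from the divisor variable of $d_0p$ to the divisor variable $\delta$ of $d_0$, collecting the two contributions and recognizing the combination $\sum_{\delta\mid d_0}\frac{\delta\mu(\delta)}{\phi(\delta)}S(H_\delta,\delta p)$ as in \eqref{formulaMd0pH}.

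The main obstacle will be relating $S(H_{d_0},d_0p)=\sum_{h\in H_{d_0}}s(h,d_0p)$ — the sum over the large preimage group $H_{d_0}$ of order $\phi(d_0)d$ — to the sums $S(H_\delta,\delta p)$ over the smaller groups appearing in the stated formula. Here I would use the multiplicative structure coming from the canonical morphisms $s_\delta$ together with the behavior of Dedekind sums under the divisor reduction, which should let me factor the dependence on the $d_0$-part from the $p$-part. The normalizing constants $\frac{\phi(d_0)}{d_0^2}$ and $\frac{1}{\prod_{q\mid d_0}(q+1)}$ should emerge precisely from this factorization, since $\prod_{q\mid d_0}(q^2-1)=\phi(d_0)\prod_{q\mid d_0}(q+1)$ and the prefactor $\frac{2\pi^2\mu(d_0)\phi(d_0)}{d_0^2}$ matches $\kappa_{d_0}\cdot\frac{12\mu(d_0)}{\prod_{q\mid d_0}(q^2-1)}\cdot\frac{\phi(d_0)}{\text{(something)}}$ after clearing denominators.

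To extract the form \eqref{Md0pH}, I would isolate the single term $h=1$ in the innermost sum, since $s(1,m)=\frac{m^2-3m+2}{12m}$ by \eqref{s1d} gives the leading contribution. Evaluating $\sum_{\delta\mid d_0}\frac{\delta\mu(\delta)}{\phi(\delta)}s(1,\delta p)$ via the explicit value of $s(1,\cdot)$ and the multiplicativity of $\delta\mapsto\frac{\delta\mu(\delta)}{\phi(\delta)}$ should produce exactly $\kappa_{d_0}$ as the main term together with the $-f$ in $N_{d_0}(f,H)$; this is the computation defining $N_{d_0}(f,\{1\})$. Finally, the two alternative expressions \eqref{defNd0fHbis} and \eqref{defNd0fHter} follow by the algebraic identity $\prod_{q\mid d_0}(q^2-1)=\phi(d_0)\prod_{q\mid d_0}(q+1)$ rewriting the constant in \eqref{defNd0fH}, and by splitting off the $h=1$ term to separate $N_{d_0}(f,\{1\})$ from the genuine subgroup contribution $S'(H_\delta,\delta f)=\sum_{h\in H_\delta'}s(h,\delta f)$. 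The bulk of the work is the careful reindexing of divisors and the factorization; the evaluation of $s(1,\cdot)$ is routine once the structure is in place.
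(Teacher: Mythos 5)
Your overall strategy is the same as the paper's: apply \eqref{formulaM(f,H)} to the pair $(d_0p,H_{d_0})$ via \eqref{MM}, split the divisors of $d_0p$ into those dividing $d_0$ and those of the form $\delta p$ with $\delta\mid d_0$, and collapse the sums over $H_{d_0}$ into sums over $H_\delta$ by counting fibers of the canonical morphisms, with the constants reconciled by $\prod_{q\mid d_0}(q^2-1)=\phi(d_0)\prod_{q\mid d_0}(q+1)$. However, two of your steps have genuine gaps.

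First, the terms whose modulus $d_0p/D$ divides $d_0$ (i.e.\ $p\mid D$). You call these ``a lower-order term that I expect to cancel against part of the main sum.'' That reasoning could only yield an asymptotic formula, whereas \eqref{formulaMd0pH} and \eqref{Md0pH} are exact identities; moreover nothing in the main sum cancels them. What is true, and what the paper proves, is that each such inner sum vanishes identically: since $H_{d_0}=s_{d_0}^{-1}(H)$ contains the kernel of $({\mathbb Z}/d_0p{\mathbb Z})^*\to({\mathbb Z}/p{\mathbb Z})^*$, which by the Chinese remainder theorem is a full copy of $({\mathbb Z}/d_0{\mathbb Z})^*$, the reduction of $H_{d_0}$ modulo any $\delta'\mid d_0$ is surjective onto $({\mathbb Z}/\delta'{\mathbb Z})^*$ with fibers of equal cardinality, so $\sum_{h\in H_{d_0}}s(h,\delta')$ is a multiple of $\sum_{c\in({\mathbb Z}/\delta'{\mathbb Z})^*}s(c,\delta')$, and the latter is $0$ because $s(-c,\delta')=-s(c,\delta')$. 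Without this exact vanishing you do not obtain \eqref{formulaMd0pH}.

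Second, your route to \eqref{Md0pH} --- isolating ``the single term $h=1$'' and evaluating $s(1,\delta p)$ by \eqref{s1d} so as to ``produce exactly $\kappa_{d_0}$'' --- is both unnecessary and numerically incorrect in the formulation you are using. In $\sum_{\delta\mid d_0}\frac{\delta\mu(\delta)}{\phi(\delta)}S(H_\delta,\delta p)$ the terms with $h=1$ contribute, to leading order, $\frac{p}{12}\sum_{\delta\mid d_0}\frac{\delta^2\mu(\delta)}{\phi(\delta)}$, which after multiplication by the prefactor equals $\frac{\pi^2}{6d_0^2}\prod_{q\mid d_0}(q^2-q+1)$; this differs from $\kappa_{d_0}=\frac{\pi^2}{6d_0^2}\prod_{q\mid d_0}(q^2-1)$ already for $d_0=3$ (one gets $7$ in place of $8$). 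The correct object to split off is not the element $h=1$ but the whole fiber $s_\delta^{-1}(\{1\})$, of cardinality $\phi(\delta)$; that split is precisely the content of \eqref{defNd0fHter}. In fact no evaluation of $s(1,\cdot)$ is needed anywhere in this theorem: once \eqref{formulaMd0pH} is established, \eqref{Md0pH} is a purely formal rewriting, since expanding $\kappa_{d_0}\left(1+\frac{N_{d_0}(p,H)}{p}\right)$ with $N_{d_0}(p,H)$ as defined in \eqref{defNd0fH}, the fiber identity $\sum_{h\in H_{d_0}}s(h,\delta p)=\frac{\phi(d_0)}{\phi(\delta)}S(H_\delta,\delta p)$, and $\prod_{q\mid d_0}\left(1-\frac{1}{q^2}\right)=\frac{1}{d_0^2}\prod_{q\mid d_0}(q^2-1)$ reproduces exactly the right-hand side of \eqref{formulaMd0pH}; the ``$-p$'' in \eqref{defNd0fH} cancels the ``$1$''. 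This is what the paper does, and it is the only correct reading of why \eqref{Md0pH}, \eqref{defNd0fHbis} and \eqref{defNd0fHter} follow.
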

}}

\begin{proof}
Using (\ref{MM}) and by making the change of variables $\delta\mapsto d_0f/\delta$ in (\ref{formulaM(f,H)}), we obtain: 
\begin{equation}\label{Md0fH}
M_{d_0}(f,H)
=M(d_0f,H_{d_0})
={2\pi^2\over d_0^2f^2}\sum_{\delta\mid d_0f}\delta\mu (d_0f/\delta)\sum_{h\in H_{d_0}}s(h,\delta).
\end{equation}
Since
$\{\delta;\ \delta\mid d_0p\}$ 
is the disjoint union of
$\{\delta;\ \delta\mid d_0\}$
and $\{\delta p;\ \delta\mid d_0\}$, 
by (\ref{Md0fH}) we obtain:
$$M_{d_0}(p,H)
=-{2\pi^2\mu(d_0)\over d_0^2p^2}\sum_{\delta\mid d_0}\delta\mu (\delta)\sum_{h\in H_{d_0}}s(h,\delta)
+{2\pi^2\mu(d_0)\over d_0^2p}\sum_{\delta\mid d_0}\delta\mu (\delta)\sum_{h\in H_{d_0}}s(h,\delta p).$$
Now, 
$S:=\sum_{h\in H_{d_0}}s(h,\delta)=0$ whenever $\delta\mid d_0$, 
which gives 
\begin{equation}\label{stepMersenne1}
M_{d_0}(p,H)
={2\pi^2\mu(d_0)\over d_0^2p}\sum_{\delta\mid d_0}\delta\mu (\delta)\sum_{h\in H_{d_0}}s(h,\delta p)
\end{equation} 
and implies (\ref{Md0pH}).
Indeed, let $\sigma:({\mathbb Z}/d_0f{\mathbb Z})^*\longrightarrow ({\mathbb Z}/\delta{\mathbb Z})^*$ 
be the canonical surjective morphism. 
Its restriction $\tau$ to the subgroup $H_{d_0}$ is surjective, 
by the Chinese reminder theorem. 
Hence,
$S
=(H_{d_0}:\ker\tau)
\times S'$, 
where 
$S':=\sum_{c\in ({\mathbb Z}/\delta{\mathbb Z})^*}s(c,\delta)
=\sum_{c\in ({\mathbb Z}/\delta{\mathbb Z})^*}s(-c,\delta)
=-S'$ 
yields $S'=0$.
In the same way, whenever $\delta\mid d_0$,
the kernel of the canonical surjective morphism 
$s:({\mathbb Z}/d_0f{\mathbb Z})^*\longrightarrow ({\mathbb Z}/\delta f{\mathbb Z})^*$ 
being a subgroup of order $\phi(d_0f)/\phi(\delta f) =\phi(d_0)/\phi(\delta)$, 
we have
\begin{equation}\label{stepMersenne2}
\sum_{h\in H_{d_0}}s(h,\delta f)
=\frac{\phi(d_0)}{\phi(\delta)}\sum_{h\in H_{\delta}}s(h,\delta f)
\end{equation} 
and (\ref{formulaMd0pH}) follows from (\ref{stepMersenne1}) and (\ref{stepMersenne2}). 

Then, (\ref{Md0pH}) is a direct consequence of (\ref{formulaMd0pH}) and (\ref{defNd0fH}). 
Finally (\ref{defNd0fHter}) is an immediate consequence of (\ref{defNd0fH}) and (\ref{stepMersenne2}).
\end{proof}

\subsubsection{A new proof of Theorem \ref{asympd0}}
We split the sum in \eqref{stepMersenne1} into two cases depending whether $h=1$ or not. 
By \eqref{s1d} we have $s(1,\delta p)=\frac{p\delta}{12}+O(1)$ giving a contribution to the sum of order
$$ \frac{\pi^2 \mu(d_0)}{6d_0^2} \sum_{\delta \mid d_0}\delta^2\mu(\delta) + O(1/p) 
= \frac{\pi^2}{6} \prod_{q \mid d_0}\left(1-\frac{1}{q^2}\right)+O(1/p).$$ 
When $h\neq 1$ and $h\in H_{d_0}$, it is clear that the order of $h$ modulo $p$ is between $3$ and $d$. 
Hence it follows from Theorem \ref{indivbound} (see the Remark after) that 
$s(h,\delta p)=O((\log p)^2 p^{1-\frac{1}{\phi(d)}})$. 
The integer $d_0$ being fixed, we can sum up these error terms and the proof is finished.

\subsection{An explicit way to compute $N_{d_0}(f,\{1\})$}
\begin{lemma}\label{Sd0fAd0f}
Let $d_0>1$ be a square-free integer. 
Let $f>2$ be coprime with $d_0$.
Recall that $H_{d_0}(f)
=\{h\in ({\mathbb Z}/d_0f{\mathbb Z})^*,\ h\equiv 1\pmod f\}$ and set
$$U(d_0,f)
:=\sum_{1\neq h\in H_{d_0}(f)}
\sum_{n=1\atop\gcd(d_0,n)=1}^{d_0f -1}
\left (1+\cot\left ({\pi n\over d_0f}\right )\cot\left ({\pi nh\over d_0f}\right )\right )$$
and
\begin{equation}\label{Ad0f}
A(d_0,f)
=\sum_{a\in ({\mathbb Z}/d_0{\mathbb Z})^*}
\sum_{b\in ({\mathbb Z}/d_0{\mathbb Z})^*\atop b\neq a}
\cot\left (\frac{\pi (b-a)}{d_0}\right )
\left (
\cot\left (\frac{\pi fa}{d_0}\right )
-\cot\left (\frac{\pi fb}{d_0}\right )
\right ),
\end{equation}
a rational number depending only on $f$ modulo $d_0$. 
Then 
$U(d_0,f)
=fA(d_0,f).$ 
\end{lemma}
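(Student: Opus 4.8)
The plan is to evaluate, for each fixed $h$, the inner sum over $n$ in closed form, and then recognize the resulting double sum over $(\mathbb{Z}/d_0\mathbb{Z})^*$ as $fA(d_0,f)$ after a change of variables. The starting point is the Chinese Remainder Theorem. Since $\gcd(d_0,f)=1$, reduction modulo $d_0$ identifies $H_{d_0}(f)$ with $(\mathbb{Z}/d_0\mathbb{Z})^*$: each $h$ is determined by $a:=h\bmod d_0$ together with the condition $h\equiv 1\pmod f$, and $h\ne 1$ corresponds exactly to $a\ne 1$. Likewise the indices $1\le n\le d_0f-1$ with $\gcd(d_0,n)=1$ are precisely those with $\alpha:=n\bmod d_0\in(\mathbb{Z}/d_0\mathbb{Z})^*$ and arbitrary residue modulo $f$; fixing $\alpha$, I would write $n=\alpha+d_0t$ with $t=0,\dots,f-1$. (Note that the rationality of $A(d_0,f)$ and its dependence on $f$ only modulo $d_0$ are visible directly from its definition.)

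Next I would apply the cotangent identity
\[
1+\cot A\cot B=\cot(A-B)\,(\cot B-\cot A),
\]
with $A=\pi n/(d_0f)$ and $B=\pi nh/(d_0f)$. Writing the integer representative $h=1+f\ell$, a direct computation gives $A-B=-\pi n\ell/d_0$, so that $\cot(A-B)=-\cot(\pi\alpha\ell/d_0)$ depends only on $\alpha$ and not on $t$. This factor is finite precisely because $h\ne 1$ forces $a\not\equiv 1\pmod{d_0}$, hence $\ell\not\equiv 0\pmod{d_0}$; this is the origin of the condition $b\ne a$ in the definition of $A(d_0,f)$. The inner sum over $t$ thus collapses to this constant factor times $\sum_{t}(\cot B-\cot A)$.

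I would then evaluate the two inner sums by the cotangent distribution relation $\sum_{t=0}^{f-1}\cot(\pi(z+t)/f)=f\cot(\pi z)$. For $A=\pi(\alpha/d_0+t)/f$ this applies immediately with $z=\alpha/d_0$. For $B$ I would first use $h\equiv 1\pmod f$ to write $B=\pi(\alpha h/d_0+t)/f+\pi\ell t$ and absorb the integer multiple $\pi\ell t$ by periodicity of $\cot$, so that the relation applies with $z=\alpha h/d_0$. Recalling $\alpha h\equiv\alpha a\pmod{d_0}$, this yields, for $h=h_a$,
\[
\sum_{\substack{n=1\\ \gcd(d_0,n)=1}}^{d_0f-1}(1+\cot A\cot B)
= -f\sum_{\alpha}\cot\left(\frac{\pi\alpha\ell}{d_0}\right)\left(\cot\left(\frac{\pi\alpha a}{d_0}\right)-\cot\left(\frac{\pi\alpha}{d_0}\right)\right),
\]
where $\ell\equiv f^{-1}(a-1)\pmod{d_0}$.

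Finally I would sum over $a\ne 1$ and substitute $\gamma=f^{-1}\alpha$, which turns $\alpha\ell$ into $\gamma(a-1)$ and turns $\alpha,\alpha a$ into $f\gamma,f\gamma a$. Setting $(a',b')=(\gamma,\gamma a)$ gives a bijection from the set of pairs $(a,\alpha)$ with $a\ne 1$ onto the pairs $a',b'\in(\mathbb{Z}/d_0\mathbb{Z})^*$ with $a'\ne b'$, and identifies the summand with that of $A(d_0,f)$ (the overall sign is supplied by $\cot(\pi fb'/d_0)-\cot(\pi fa'/d_0)$), giving $U(d_0,f)=fA(d_0,f)$. I expect the main obstacle to be the bookkeeping of the inner $t$-sum: correctly reducing the argument of $B$ modulo $\pi$ so that the distribution relation applies, checking that no cotangent becomes singular (which is exactly where $h\ne 1$ and $\gcd(d_0,f)=1$ are used), and verifying that the final substitution is a genuine bijection matching the summation ranges of $A(d_0,f)$.
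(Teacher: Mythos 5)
Your proof is correct, but it follows a genuinely different route from the paper's. The paper never computes $U(d_0,f)$ directly: it introduces the companion sum $T(d_0,f)$, in which $n$ is restricted to be coprime to $d_0f$ (not just to $d_0$), shows via M\"obius detection of the condition $\gcd(f,n)=1$ and the bijectivity of the reduction map $H_{d_0}(f)\to H_{d_0}(f/d)$ that $T(d_0,f)=\sum_{d\mid f}\mu(d)\,U(d_0,f/d)$, then invokes the already-published evaluation $T(d_0,d)=d\sum_{\delta\mid d}\frac{\mu(\delta)}{\delta}A(d_0,d/\delta)$ from \cite[Lemma 6]{LouPMDebr78}, and finishes by M\"obius inversion, the telescoping $\sum_{\delta\mid f/\delta'}\mu(\delta)$ killing every term except $\delta'=f$. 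Your argument instead evaluates $U(d_0,f)$ in closed form from scratch: CRT parametrization of $H_{d_0}(f)$ by $a=h\bmod d_0$ and of the indices $n=\alpha+d_0t$, the identity $1+\cot A\cot B=\cot(A-B)(\cot B-\cot A)$ with the crucial observation that $\cot(A-B)=-\cot(\pi\alpha\ell/d_0)$ is independent of $t$ (and finite exactly because $h\neq 1$), the distribution relation $\sum_{t=0}^{f-1}\cot(\pi(z+t)/f)=f\cot(\pi z)$ to collapse the $t$-sums, and the substitution $(a',b')=(f^{-1}\alpha,\,f^{-1}\alpha a)$ to match the definition of $A(d_0,f)$; I have checked that the sign and the bijection work out as you claim. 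What each approach buys: the paper's proof is short and recycles the machinery of \cite{LouPMDebr78} (whose Lemma 6 contains essentially the cotangent computation you redo), at the cost of being opaque without that reference; yours is self-contained, explains where the expression \eqref{Ad0f} actually comes from, and in particular makes the factor $f$ visible as the length of the inner arithmetic progression. The only soft spot is your parenthetical claim that rationality of $A(d_0,f)$ is ``visible directly'' from the definition --- that still requires the standard Galois-invariance argument for sums of products of cotangents at rational angles --- but this side assertion is inherited from the cited literature in the paper as well and does not affect the identity $U(d_0,f)=fA(d_0,f)$.
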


\begin{proof}
As in \cite[Lemma ]{LouPMDebr78}, 
set
$$T(d_0,f)
:=\sum_{1\neq h\in H_{d_0}(f)}
\sum_{n=1\atop\gcd(d_0f,n)=1}^{d_0f -1}
F\left (\frac{n}{d_0f},\frac{nh}{d_0f}\right ),$$
where $F(x,y)=1+\cot (\pi x)\cot (\pi y)$. 
On the one hand, 
since $\gcd(d_0f,n)=1$ if and only if $\gcd(d_0,n)=\gcd(f,n)=1$ 
and $\displaystyle{\sum_{d\mid f\atop d\mid n}\mu(d)}$ is equal to $1$ if $\gcd(f,n)=1$ and is equal to $0$ otherwise, 
we have
$$T(d_0,f)
=\sum_{d\mid f}\mu(d)
\sum_{1\neq h\in H_{d_0}(f)}
\sum_{n=1\atop\gcd(d_0,n)=1}^{d_0(f/d) -1}F\left (\frac{n}{d_0(f/d)},\frac{nh}{d_0(f/d)}\right ).$$
On the other hand, 
the canonical morphism 
$\sigma :H_{d_0}(f)
\rightarrow H_{d_0}(f/d)$ 
is surjective 
and both groups have order $\phi(d_0f)/\phi(f) =\phi(d_0(f/d))/\phi (f/d) =\phi(d_0)$. 
Hence $\sigma$ is bijective and 
$$T(d_0,f)
=\sum_{d\mid f}\mu(d)U(d_0,f/d).$$
Using \cite[Lemma 6]{LouPMDebr78} and M\"{o}bius' inversion formula, 
we finally do obtain 
\begin{align*}
U(d_0,f)
=\sum_{d\mid f}T(d_0,d)
&=\sum_{d\mid f}d\sum_{\delta\mid d}\frac{\mu(\delta)}{\delta}A(d_0,d/\delta)\\
&=\sum_{\delta'\mid f}
\delta'\left (\sum_{\delta\mid f/\delta'}\mu(\delta)\right )A(d_0,\delta')
=fA(d_0,f),
\end{align*}
where we set $\delta'=d/\delta$.
\end{proof}

\begin{proposition}\label{H=1}
Let $d_0>1$ be a square-free integer. 
Set 
$B=\prod_{q\mid d_0}(q^2-1)$. 
For $f>2$ and $\gcd (d_0,f)=1$ we have 
$$N_{d_0}(f,\{1\})
=\frac{3}{B}
\left (A(d_0,f)-\phi(d_0)^2\right ).$$
Consequently, $N_{d_0}(f,\{1\})$ is a rational number depending only on $f$ modulo $d_0$. 
\end{proposition}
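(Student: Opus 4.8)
The plan is to derive the formula for $N_{d_0}(f,\{1\})$ directly from the definition \eqref{defNd0fH} specialized to $H=\{1\}$, using Lemma \ref{Sd0fAd0f} to evaluate the relevant sum of Dedekind sums in terms of $A(d_0,f)$. First I would set $H=\{1\}$ in \eqref{defNd0fH}, so that $H_{d_0}=H_{d_0}(f)=\{h\in({\mathbb Z}/d_0f{\mathbb Z})^*:\ h\equiv 1\pmod f\}$ and
\begin{equation*}
N_{d_0}(f,\{1\})
=-f+\frac{12\mu(d_0)}{B}\sum_{\delta\mid d_0}\delta\mu(\delta)\sum_{h\in H_{d_0}(f)}s(h,\delta f),
\end{equation*}
where $B=\prod_{q\mid d_0}(q^2-1)$. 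The task is thus to compute the double sum $\sum_{\delta\mid d_0}\delta\mu(\delta)\sum_{h\in H_{d_0}(f)}s(h,\delta f)$ in closed form.

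The key step is to connect this quantity to $U(d_0,f)$ from Lemma \ref{Sd0fAd0f}. Recalling the sawtooth/cotangent expression for Dedekind sums in \eqref{defscd}, the inner sum $\sum_{h}s(h,\delta f)$ can be rewritten using the cotangent definition of $s$ at modulus $\delta f$. The natural route is to first isolate the term $h=1$ (giving $s(1,\delta f)$, which is controlled by \eqref{s1d}) and then handle the $h\neq 1$ contributions. The cotangent sum $\sum_{n}\cot(\pi n/d_0 f)\cot(\pi nh/d_0 f)$ restricted to $n$ coprime to $d_0$ is precisely what enters $U(d_0,f)$, up to the additive constant $1$ inside the summand $F(x,y)=1+\cot(\pi x)\cot(\pi y)$. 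The plan is to expand the Möbius-weighted sum over $\delta\mid d_0$ and recognize that these coefficients implement exactly the coprimality-to-$d_0$ restriction on $n$ via the identity $\sum_{\delta\mid\gcd(d_0,n)}\mu(\delta)=[\gcd(d_0,n)=1]$, thereby assembling the individual $s(h,\delta f)$ into the single quantity $U(d_0,f)$ whose value is $fA(d_0,f)$ by Lemma \ref{Sd0fAd0f}.

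I expect the main obstacle to be the bookkeeping that converts the Möbius-weighted combination $\sum_{\delta\mid d_0}\delta\mu(\delta)s(h,\delta f)$ into the restricted cotangent sum appearing in $U(d_0,f)$. Concretely, one must track how the factor $\delta$ and the different moduli $\delta f$ interact: writing each $s(h,\delta f)$ via \eqref{defscd} introduces the prefactor $1/(4\delta f)$ and a sum over $n\bmod \delta f$, and one needs the change of index that relates residues mod $\delta f$ to residues mod $d_0 f$ coprime to $d_0$. The constant term (the ``$1$'' in $F$) and the isolated $s(1,\delta f)$ terms must be shown to combine into the $-f$ and the $-\phi(d_0)^2$ contributions; here the multiplicativity computation $\sum_{\delta\mid d_0}\delta^2\mu(\delta)=\prod_{q\mid d_0}(1-q^2)=(-1)^{\omega(d_0)}B=\mu(d_0)^2\cdot(\pm)B$ and $\sum_{\delta\mid d_0}\delta\mu(\delta)=\prod_{q\mid d_0}(1-q)$ will be needed to match normalizations.

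Once $U(d_0,f)=fA(d_0,f)$ is substituted, the remaining step is purely algebraic: collecting the coefficient $12\mu(d_0)/B$ against $fA(d_0,f)$, cancelling the factor $f$, absorbing the $h=1$ and constant-term contributions, and simplifying using $\mu(d_0)^2=1$ (valid since $d_0$ is square-free) yields
\begin{equation*}
N_{d_0}(f,\{1\})=\frac{3}{B}\bigl(A(d_0,f)-\phi(d_0)^2\bigr).
\end{equation*}
Since $A(d_0,f)$ depends only on $f$ modulo $d_0$ by Lemma \ref{Sd0fAd0f}, and $B$ and $\phi(d_0)$ depend only on $d_0$, the final assertion that $N_{d_0}(f,\{1\})$ is a rational number depending only on $f\bmod d_0$ follows immediately.
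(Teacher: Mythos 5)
Your proposal is correct and follows essentially the same route as the paper's proof: specialize \eqref{defNd0fH} to $H=\{1\}$, isolate the $h=1$ term via \eqref{s1d}, rewrite the remaining Dedekind sums through the cotangent definition \eqref{defscd} so that the M\"obius weights implement the coprimality-to-$d_0$ restriction (the paper phrases this as the change of variables $\delta\mapsto d_0/\delta$), and finish with $U(d_0,f)=fA(d_0,f)$ from Lemma \ref{Sd0fAd0f}. The bookkeeping identities you flag, $\sum_{\delta\mid d_0}\delta^2\mu(\delta)=\mu(d_0)B$ and $\sum_{\delta\mid d_0}\delta\mu(\delta)=\mu(d_0)\phi(d_0)$ together with $\mu(d_0)^2=1$, are exactly the ones the paper uses.
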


\begin{proof}
Set $H=H_{d_0}(f):=\{h\in ({\mathbb Z}/d_0f{\mathbb Z})^*,\ h\equiv 1\pmod f\}$.
By (\ref{defNd0fH}) we have 
$$N_{d_0}(f,\{1\})
=-f+\frac{12\mu(d_0)}{B}\sum_{\delta\mid d_0}\delta\mu (\delta)
\sum_{h\in H}s(h,\delta f).$$
Using (\ref{s1d}) 
to evaluate the contribution of $h=1$ in this expression 
and $\sum_{\delta\mid d_0}\mu(\delta) =0$, 
we get 
$$N_{d_0}(f,\{1\})
=-\frac{3\phi(d_0)}{B}
+\frac{12\mu(d_0)}{B}\sum_{\delta\mid d_0}\delta\mu (\delta)
\sum_{1\neq h\in H}s(h,\delta f)$$
and 
$$N_{d_0}(f,\{1\})
=-\frac{3\phi(d_0)^2}{B}
+\frac{3\mu(d_0)}{Bf}
\sum_{1\neq h\in H}
\sum_{\delta\mid d_0}\mu (\delta)
\sum_{n=1}^{\delta f -1}\left (1+\cot\left ({\pi n\over\delta f}\right )\cot\left ({\pi nh\over\delta f}\right )\right ),$$
by (\ref{defscd}) 
and by noticing that $\# H =\phi(d_0)$. 
Therefore, 
\begin{equation}\label{S}
N_{d_0}(f,\{1\})
=-\frac{3\phi(d_0)^2}{B}
+\frac{3}{Bf}S(d_0,f)
\end{equation}
(make the change of variable $\delta\mapsto d_0/\delta$). 
Lemma \ref{Sd0fAd0f} gives the desired result.
\end{proof}

\begin{Remark}
As a consequence we obtain
$M_{d_0}(p,\{1\})
=\frac{\pi^2}{6}\prod_{q\mid d_0}\left (1-\frac{1}{q^2}\right )
+O(p^{-1})$, 
using (\ref{Md0pH}) and the fact that $N_{d_0}(p,\{1\})$ 
depends only on $p$ modulo $d_0$. 
This gives in this extreme situation another proof of Theorem \ref{asympd0} with a better error term.
Moreover, in that situation we have $K ={\mathbb Q}(\zeta_p)$ and in (\ref{boundhKminusd0}) 
the term $\Pi_{d_0}(p,\{1\})$ is bounded from below by a constant independent of $p$, 
by Corollary \ref{PiKminus}.
\end{Remark}

\section{The case where $f=a^{d-1}+\cdots+a^2+a+1$}\label{genMersenne}
In this specific case we are able to obtain explicit formulas for $M_{d_0}(f,H)$ 
when the subgroup $H$ is defined in terms of the parameter $a$ defining the modulus. 
For a general subgroup $H$, it seems unrealistic to be more explicit than the formula involving Dedekind sums 
given in Theorem \ref{Thgeneralexplicit}. 
It might be interesting to explore formulas involving continued fraction expansions 
in view of their link to Dedekind sums \cite{Hic}.

\subsection{Explicit formulas for $d_0=1,2$}
\begin{lemma}\label{(a^d-1)/(a-1)}
Let $f>1$ be a rational integer of the form $f=(a^d-1)/(a-1)$ for some $a\neq -1,0,1$ 
and some odd integer $d\geq 3$. Hence $f$ is odd.
Set $H=\{a^k;\ 0\leq k\leq d-1\}$, 
a subgroup of order $d$ of the multiplicative group $({\mathbb Z}/f{\mathbb Z})^*$.
Then, 
$$S(H,f)
=\frac{a+1}{a-1}\times\frac{f-(d-1)a-1}{12}$$
and 
$$S(H_2,2f)
=\begin{cases}
\frac{a+1}{a-1}\times\frac{4f-(d-1)a-3d-1}{24}&\hbox{if $a$ is odd}\\
\frac{2a-1}{a-1}\times\frac{f-(d-1)a-1}{12}&\hbox{if $a$ is even.}
\end{cases}$$
\end{lemma}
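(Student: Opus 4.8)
The plan is to collapse each individual Dedekind sum $s(a^k,f)$ to an explicit function of $a$ and of $a^k$ by two applications of the reciprocity law (\ref{scddc}), and then to sum over $k$, the dependence on $k$ being purely geometric. I treat $a\ge 2$ (the range occurring in the applications); the order count and the identification of representatives below use $a\ge 2$, whereas the reciprocity computation depends only on residues. Since $f=1+a+\cdots+a^{d-1}$ consists of $d$ terms with $d$ odd, $f$ is odd whatever the parity of $a$, and $f\equiv 1\pmod a$ gives $\gcd(a,f)=1$. Moreover $a^d=1+(a-1)f\equiv 1\pmod f$, while for $1\le e\le d-1$ one has $0<a^e-1<a^{d-1}<f$, so $f\nmid a^e-1$; hence $a$ has order exactly $d$ and $H=\{a^k:0\le k\le d-1\}$ has $d$ elements with $S(H,f)=\sum_{k=0}^{d-1}s(a^k,f)$ (as $s(\cdot,f)$ depends only on the residue mod $f$, it is harmless that $a^k$ already lies in $[1,f)$).

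For the reduction engine, fix $1\le k\le d-1$ and put $f_k:=(a^k-1)/(a-1)=1+a+\cdots+a^{k-1}$. Since $f\equiv f_k\pmod{a^k}$, reciprocity (\ref{scddc}) gives
$$s(a^k,f)=\frac{a^{2k}+f^2-3a^kf+1}{12a^kf}-s(f_k,a^k).$$
The key simplification is that $(a-1)f_k=a^k-1\equiv-1\pmod{a^k}$, so $f_k^{-1}\equiv 1-a\pmod{a^k}$, and (\ref{cc*}) together with the oddness of $s(\cdot,d)$ yields $s(f_k,a^k)=s(1-a,a^k)=-s(a-1,a^k)$. A final application of (\ref{scddc}), now against the \emph{fixed} modulus $a-1$ and using $a^k\equiv 1\pmod{a-1}$ so that $s(a^k,a-1)=s(1,a-1)$ is given by (\ref{s1d}), expresses $s(a-1,a^k)$ explicitly. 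The outcome is a closed form for $s(a^k,f)$ whose only $k$-dependence is through $a^k$ and $a^{-k}$; the term $k=0$ is just $s(1,f)$ from (\ref{s1d}).

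Summing these closed forms over $0\le k\le d-1$ therefore reduces to the geometric sums $\sum_k a^k$, $\sum_k a^{-k}$ and $\sum_k 1$. After collecting terms the fractional pieces cancel and one is left with $\frac{a+1}{a-1}\cdot\frac{f-(d-1)a-1}{12}$; I would confirm the bookkeeping on the case $a=2$, $d=3$, where $f=7$ and both sides equal $1/2$.

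For $S(H_2,2f)$ I would argue that, $f$ being odd, the kernel of $s_2$ is trivial, so $H_2$ is the isomorphic lift of $H$ into $(\mathbb{Z}/2f\mathbb{Z})^*$ and consists of the \emph{odd} representatives of the $a^k$, namely $a^k$ when $a^k$ is odd and $a^k+f$ when $a^k$ is even. This forces the split on the parity of $a$: for $a$ odd all the $a^k$ are odd, while for $a$ even only $a^0=1$ is odd and the lifts $a^k+f$ $(k\ge1)$ intervene. In each case the same two-step reciprocity engine applies with modulus $2f$, and summation again leaves a geometric computation. The \textbf{main obstacle} is precisely this parity bookkeeping in the even case: one must push the lifts $a^k+f$ through the reciprocity reductions and check that the geometric sums collapse to $\frac{2a-1}{a-1}\cdot\frac{f-(d-1)a-1}{12}$ rather than to the odd-case value $\frac{a+1}{a-1}\cdot\frac{4f-(d-1)a-3d-1}{24}$; here too I would validate the two branches on small examples ($a=2$ and $a=3$ with $d=3$, giving $S(H_2,2f)=1/2$ and $3$ respectively).
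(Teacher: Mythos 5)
Your plan is, in all essentials, the paper's own proof: the same decomposition $S(H,f)=\sum_{k=0}^{d-1}s(a^k,f)$, the same identification of $H_2$ by odd representatives with the same parity split (for odd $a$ the $a^k$ themselves, for even $a$ the value $s(1,2f)$ plus the lifts $a^k+f$, $k\geq 1$), closed forms for the individual Dedekind sums obtained by iterating the reciprocity law \eqref{scddc} so that the $k$-dependence sits only in $a^{\pm k}$, and finally geometric summation. Your particular chain for $s(a^k,f)$ --- reciprocity to $s(f_k,a^k)$, then $f_k^{-1}\equiv 1-a\pmod{a^k}$ together with \eqref{cc*} and oddness of $s(\cdot,a^k)$ to reach $-s(a-1,a^k)$, then one more reciprocity step and \eqref{s1d} --- is a clean variant of the paper's chain (which instead terminates at $s(1,f_k)$, resp.\ $s(2,f_k)$), and it reproduces exactly the closed form for $s(a^k,f)$ asserted in the paper's proof; your numerical checks are correct.

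Two genuine gaps remain. First, restricting to $a\geq 2$ does not prove the lemma as stated: the hypothesis is $a\neq -1,0,1$, and negative $a$ is used downstream (Theorem \ref{TheoremMersenne} is stated for arbitrary $a\neq -1,0,1$, and the $d=3$ applications in Section \ref{d3b1} take $a\in{\mathbb Z}$), so this is not merely a range "occurring in the applications". For $a\leq -2$ the reciprocity law \eqref{scddc} involves $\vert cd\vert$, so your claim that the reciprocity computation "depends only on residues" is false as stated; the paper's computation carries terms $\mathrm{sign}(a)^k/4$ through every step, and these cancel only in the final formulas. Second, your "two-step engine" does not close in the even-$a$ branch. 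After the first reciprocity step one faces $s(2a^k,a^k+f)$, and the inverse of $2a^k$ modulo $a^k+f$ is $(a^{d-k}+a-1)\cdot 2^{-1}$, not a bounded element, so the inversion trick produces nothing usable there. What does work --- and is exactly what the paper does --- is a third reciprocity step: $s(2a^k,a^k+f)\to s(f_k,2a^k)$ via the congruence $a^k+f\equiv f_k\pmod{2a^k}$ (which holds precisely because $a$ is even), and then $s(f_k,2a^k)\to s(2,f_k)$ via $2a^k\equiv 2\pmod{f_k}$, with $s(2,f_k)$ given by \eqref{s1d}. You correctly flagged this branch as the main obstacle, but repairing it requires this longer chain, not just bookkeeping inside the two-step scheme.
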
 

\begin{proof}
We have 
$S(H,f)
=\sum_{k=0}^{d-1}s(a^k,f)$. 
Moreover, 
$S(H_2,2f)
=\sum_{k=0}^{d-1}s(a^k,2f)$ if $a$ is odd
and 
$S(H_2,2f)
=s(1,2f)+\sum_{k=1}^{d-1}s(a^k+f,2f)$ if $a$ is even.
Now, we claim that for $0\leq k\leq d-1$ we have
$$s(a^k,f)
=\frac{a^k}{12f}+\frac{(f^2+1)a^{-k}}{12f}
+\frac{a^k+a^{-k}(a^2-2a+2)}{12(a-1)}
-\frac{a(a+1)}{12(a-1)} 
\hbox{ whatever the parity of $a$,}$$ 
$$s(a^k,2f)
=\frac{a^k}{24f}+\frac{(4f^2+1)a^{-k}}{24f}
+\frac{4a^k+a^{-k}(a^2-2a+5)}{24(a-1)}
-\frac{(a+1)(a+3)}{24(a-1)} 
\hbox{ if $a$ is odd,}$$
and that for $1\leq k\leq d-1$ we have
$$s(a^k+f,2f)
=\frac{a^k}{24f}+\frac{(f^2+1)a^{-k}}{24f}
+\frac{a^k+a^{-k}(a^2-2a+2)}{24(a-1)}
-\frac{a(2a-1)}{12(a-1)} 
\hbox{ if $a$ is even.}$$ 
Noticing that 
$\sum_{k=1}^{d-1}a^k =f-1$ 
and 
$\sum_{k=1}^{d-1}a^{-k} =\frac{f-1}{(a-1)f+1}$, 
we then get the assertions on $S(H,f)$ and $S(H_2,2f)$. 
Now, let us for example prove the third claim.
Hence, assume that $a$ is even and that $1\leq k\leq d-1$. 
Then $f_k :=(a^k-1)/(a-1)$ is odd, 
${\rm sign}(f_k)={\rm sign}(a)^k$ and $a^k+f>0$.\\
First, since $2f\equiv -2a^{k}\pmod {a^k+f}$, 
using (\ref{scddc}) we have 
$$s(a^k+f,2f) 
=\frac{a^k+f}{24f}
+\frac{f}{6(a^k+f)}
-\frac{1}{4}
+\frac{1}{24(a^k+f)f}
+s(2a^{k},a^k+f).$$
Second, noticing that $a^k+f\equiv f_k\pmod {2a^{k}}$ and using (\ref{scddc}) we have 
$$s(2a^{k},a^k+f)
=\frac{a^{k}}{6(a^k+f)}
+\frac{a^k+f}{24a^{k}}
-\frac{{\rm sign}(a)^k}{4}
+\frac{1}{24a^{k}(a^k+f)}
-s(f_k,2a^{k}).$$
Finally, noticing that $2a^{k}\equiv 2\pmod{f_k}$ and using (\ref{scddc}) and (\ref{s1d}) we have 
\begin{eqnarray*}
s(f_k,2a^{k})
&=&\frac{f_k}{24a^{k}}
+\frac{a^{k}}{6f_k}
-\frac{{\rm sign}(a)^k}{4}
+\frac{1}{24f_ka^{k}}
-s(2,f_k)\\
&=&\frac{f_k}{24a^{k}}
+\frac{a^{k}}{6f_k}
-\frac{{\rm sign}(a)^k}{4}
+\frac{1}{24f_ka^{k}}
-\frac{f_k^2-6f_k+5}{24f_k}.
\end{eqnarray*}
After some simplifications, 
we obtain the desired formula for $s(a^k+f,2f)$. 

Notice that for $d=3$ we obtain $S(H,f) =\frac{f-1}{12}$, 
in accordance with (\ref{SfabH}). 
\end{proof}

Using \eqref{formulaMd0pH} and Lemma \ref{(a^d-1)/(a-1)} we readily obtain:

\noindent\frame{\vbox{
\begin{theorem}\label{TheoremMersenne}
Let $d\geq 3$ be a prime integer. 
Let $p\equiv 1\pmod{2d}$ be a prime integer of the form $p=(a^d-1)/(a-1)$ for some $a\neq -1,0,1$. 
Let $K$ be the imaginary subfield of degree $(p-1)/d$ of the cyclotomic field ${\mathbb Q}(\zeta_p)$.
Set $H=\{a^k;\ 0\leq k\leq d-1\}$, 
a subgroup of order $d$ of the multiplicative group $({\mathbb Z}/p{\mathbb Z})^*$.
We have the mean square value formulas
\begin{equation}\label{Mpd}
M(p,H)
=\frac{\pi^2}{6}\times\frac{a+1}{a-1}\times\left (1-\frac{(d-1)a+1}{p}\right ).
\end{equation}
and
\begin{equation}\label{M2pd}
M_2(p,H)
=\frac{\pi^2}{8}\times
\begin{cases}
\frac{a+1}{a-1}\times\left (1-\frac{d}{p}\right )&\hbox{ if $a$ is odd,}\\
1-\frac{(d-1)a+1}{p}&\hbox{ if $a$ is even.}\\
\end{cases}
\end{equation}
Consequently, for a given $d$, 
as $p\rightarrow\infty$ we have 
\begin{equation*}
M(p,H)
=\frac{\pi^2}{6}+o(1)
\hbox{ and }
M_2(p,H)
=\frac{\pi^2}{8}+o(1).
\end{equation*}
On the other hand, for a given $a$, as $p\rightarrow\infty$ we have 
\begin{equation*}
M(p,H)
=\frac{\pi^2}{6}\times\frac{a+1}{a-1}+o(1)
\hbox{ and }
M_2(p,H)
=\begin{cases}
\frac{\pi^2}{8}\times\frac{a+1}{a-1}+o(1)
&\hbox{if $a$ is odd,}\\
\frac{\pi^2}{8}+o(1)
&\hbox{if $a$ is even.}
\end{cases}
\end{equation*}
\end{theorem}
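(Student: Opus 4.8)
The plan is to treat this statement as a direct evaluation: specialize the explicit formulas already proved to the two cases at hand ($d_0=1$ and $d_0=2$) and then substitute the closed forms for $S(H,f)$ and $S(H_2,2f)$ furnished by Lemma \ref{(a^d-1)/(a-1)}. Since $p=(a^d-1)/(a-1)$ is prime and $H=\{a^k;\ 0\le k\le d-1\}$ has odd order $d$, all hypotheses of Lemma \ref{(a^d-1)/(a-1)} and of Theorem \ref{Thgeneralexplicit} are met, so there is no structural work to do; the content is purely the bookkeeping of the resulting rational expressions, followed by passing to the limit in the two coupled regimes.

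First I would handle $M(p,H)$. For prime $f=p$ the divisors are $\{1,p\}$ and $s(h,1)=0$, so \eqref{formulaM(f,H)} collapses to $M(p,H)=\frac{2\pi^2}{p}S(H,p)$. Plugging in $S(H,p)=\frac{a+1}{a-1}\cdot\frac{p-(d-1)a-1}{12}$ from Lemma \ref{(a^d-1)/(a-1)} and writing $\frac{p-(d-1)a-1}{p}=1-\frac{(d-1)a+1}{p}$ immediately yields \eqref{Mpd}.

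Next, for $M_2(p,H)$ I would set $d_0=2$ in \eqref{formulaMd0pH}, where $\mu(2)=-1$ and $\phi(2)=1$. The divisors $\delta\in\{1,2\}$ contribute $S(H_1,p)=S(H,p)$ and $\frac{2\mu(2)}{\phi(2)}S(H_2,2p)=-2\,S(H_2,2p)$, so that
\[
M_2(p,H)=\frac{\pi^2}{2p}\bigl(2\,S(H_2,2p)-S(H,p)\bigr).
\]
I would then substitute the two parity-dependent expressions for $S(H_2,2p)$ from Lemma \ref{(a^d-1)/(a-1)}. When $a$ is odd both $S$-values carry the common factor $\frac{a+1}{a-1}$; pulling it out, the $(d-1)a$ terms cancel and the bracket reduces to a multiple of $p-d$, giving $M_2(p,H)=\frac{\pi^2}{8}\cdot\frac{a+1}{a-1}\bigl(1-\frac{d}{p}\bigr)$. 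When $a$ is even the common factor is instead $\frac{p-(d-1)a-1}{12}$, and the combination $2\cdot\frac{2a-1}{a-1}-\frac{a+1}{a-1}=\frac{3(a-1)}{a-1}=3$ clears the denominator $a-1$ entirely, producing $M_2(p,H)=\frac{\pi^2}{8}\bigl(1-\frac{(d-1)a+1}{p}\bigr)$. This establishes \eqref{M2pd}. I expect this step to be the only delicate point: the even and odd cases must be tracked separately, and one must verify that the linear-in-$a$ corrections cancel so as to leave the stated clean factors (the disappearance of $\frac{a+1}{a-1}$ in the even case being the characteristic feature).

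Finally, the four asymptotic statements drop out of \eqref{Mpd} and \eqref{M2pd} by examining how $a$ and $d$ are coupled through $p=(a^d-1)/(a-1)=a^{d-1}+\cdots+a+1$. For fixed $d\ge 3$, letting $p\to\infty$ forces $|a|\to\infty$; since $p\asymp a^{d-1}$ dominates the linear-in-$a$ corrections (as $d-1\ge 2$) and $\frac{a+1}{a-1}\to 1$, both $M(p,H)$ and $M_2(p,H)$ tend to $\frac{\pi^2}{6}$ and $\frac{\pi^2}{8}$ respectively, the parity of $a$ becoming irrelevant in the limit. For fixed $a$, letting $p\to\infty$ forces $d\to\infty$ with $p$ growing exponentially in $d$, so $\frac{(d-1)a+1}{p}$ and $\frac{d}{p}$ are both $o(1)$ while $\frac{a+1}{a-1}$ stays fixed, yielding the remaining stated limits.
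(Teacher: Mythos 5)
Your proposal is correct and follows essentially the same route as the paper, whose proof is precisely to specialize \eqref{formulaMd0pH} (equivalently \eqref{formulaM(f,H)} when $d_0=1$) to $d_0\in\{1,2\}$ and substitute the closed forms for $S(H,p)$ and $S(H_2,2p)$ from Lemma \ref{(a^d-1)/(a-1)}; your parity-case algebra and the limit analysis in the two regimes check out.
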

}}
\begin{Remark}
Assertion (\ref{Mpd}) was initially proved\footnote{Note the misprint in the exponent in \cite[Theorem 5]{LouBPASM64}} 
in \cite[Theorem 5]{LouBPASM64} for $d=5$
and then generalized in \cite[Proposition 3.1]{LMQJM} to any $d\geq 3$.
However, (\ref{Mpd}) is much simpler than \cite[(22)]{LMQJM}. 
Notice that if $p$ runs over the prime of the form $p=(a^d-1)/(a-1)$
with $a\neq 0,2$ even then 
$M_2(p,H)
=\frac{6}{8}\times\frac{a-1}{a+1}\times M(p,H)$ 
and the asymptotic (\ref{asymptoticMd0(p,H)M(p,H)}) is not satisfied.
\end{Remark}

\subsection{The case where $p$ is a Mersenne prime and $d_0=1,3,15$}\label{Mersennesection}
In the setting of Theorem \ref{Mersenne}, we have $2\in H$. 
Hence, by Remark \ref{restrictiond0} we assume that $d_0$ is odd.

\noindent\frame{\vbox{
\begin{theorem}\label{Mersenne}
Let $p=2^d-1>3$ be a Mersenne prime. 
Hence, $d$ is odd and $H=\{2^k;\ 0\leq k\leq d-1\}$ 
is a subgroup of odd order $d$ of the multiplicative group $({\mathbb Z}/p{\mathbb Z})^*$.
Let $K$ be the imaginary subfield of degree $m =(p-1)/d$ of ${\mathbb Q}(\zeta_p)$. 
Then 
$$M(p,H)
=\frac{\pi^2}{2}\left (1-\frac{2d-1}{p}\right )
\leq\frac{\pi^2}{2}
\text{ and } 
h_K^-\leq 2\left (\frac{p}{8}\right )^{m/4},$$
$$M_3(p,H)
=\frac{4\pi^2}{9}\left (1-\frac{d}{p}\right )
\leq\frac{4\pi^2}{9}
\text{ and } 
h_K^-\leq 2\left (\frac{p}{9}\right )^{m/4}$$
and 
$$M_{15}(p,H)
=\frac{32\pi^2}{75}\left (1-\frac{c_d}{48p}\right )
\leq\frac{32\pi^2}{75},
\hbox{ where }
c_d
=\begin{cases}
47d+1&\hbox{if $d\equiv 1\pmod 4$,}\\
17d-3&\hbox{if $d\equiv 3\pmod 4$.}
\end{cases}$$
In particular, for $d\equiv 3\pmod 4$ we have
$h_K^-\leq 2\left (\frac{8p}{75}\right )^{m/4}.$
\end{theorem}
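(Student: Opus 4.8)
The plan is to treat $d_0\in\{1,3,15\}$ separately, reducing each mean value to an explicit sum of Dedekind sums and then bounding $h_K^-$ through \eqref{boundhKminus} and \eqref{boundhKminusd0}. Throughout I use that $d$ is necessarily an odd prime (otherwise $2^d-1$ would factor), that $p=2^d-1\equiv 7\pmod{12}$, and that $2$ is a quadratic residue modulo $p$; note also $w_K=2$, since $d\geq 3$ forces $K\neq{\mathbb Q}(\zeta_p)$. The case $d_0=1$ is immediate: writing $p=(2^d-1)/(2-1)$ exhibits $p$ as a modulus of the shape treated in Theorem \ref{TheoremMersenne} with $a=2$, and specialising \eqref{Mpd} gives $M(p,H)=\frac{\pi^2}{6}\cdot 3\cdot\bigl(1-\tfrac{2d-1}{p}\bigr)=\frac{\pi^2}{2}\bigl(1-\tfrac{2d-1}{p}\bigr)\leq\frac{\pi^2}{2}$, whence \eqref{boundhKminus} yields $h_K^-\leq 2(p/8)^{m/4}$.

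For $d_0\in\{3,15\}$ I would start from \eqref{formulaMd0pH}, so that the problem reduces to computing $S(H_\delta,\delta p)=\sum_{h\in H_\delta}s(h,\delta p)$ for each $\delta\mid d_0$. First I identify the groups $H_\delta$: since $\mathrm{ord}_\delta(2)=2,4$ for $\delta=3,5$ while $\mathrm{ord}_p(2)=d$ is odd, one has $\mathrm{ord}_{\delta p}(2)=\mathrm{lcm}(\mathrm{ord}_\delta(2),d)=\phi(\delta)d=\#H_\delta$, so $H_\delta=\langle 2\rangle$ is cyclic with generator $2$ for $\delta\in\{1,3,5\}$; for $\delta=15$ the element $2$ generates only the index-two subgroup $\langle 2\rangle$ of $H_{15}$, and I adjoin one coset representative $t\equiv 1\pmod p$ to obtain $H_{15}=\langle 2\rangle\sqcup t\langle 2\rangle$. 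Thus each $S(H_\delta,\delta p)$ becomes a sum $\sum_k s(2^k,\delta p)$, plus a twisted copy $\sum_k s(t\,2^k,15p)$ when $\delta=15$.

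The main work, and the chief obstacle, is to obtain closed forms for the individual Dedekind sums $s(2^k,\delta p)$, exactly as in the proof of Lemma \ref{(a^d-1)/(a-1)} but now for the moduli $3p,5p,15p$, which are not themselves of the form $(a^{d'}-1)/(a-1)$ and so cannot be read off directly from that lemma. Here I would run the three-term reciprocity chain \eqref{scddc} together with \eqref{s1d} (and \eqref{cc*} to pair $k$ with its inverse), reducing $s(2^k,\delta p)$ to a term growing like $2^k/(\delta p)$, a term like $2^{-k}$, and a $k$-independent constant; summing the geometric series $\sum_{k}2^k=2^{\phi(\delta)d}-1$ and $\sum_k 2^{-k}$ then collapses everything to a rational expression linear in $d$. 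Feeding the results into \eqref{formulaMd0pH}, or equivalently into \eqref{Md0pH} with the leading constant now $\frac{\pi^2}{2}\prod_{q\mid d_0}(1-q^{-2})$ inherited from the $d_0=1$ computation, produces $M_3(p,H)=\frac{4\pi^2}{9}(1-\tfrac{d}{p})$ and the stated $M_{15}$; the dependence of $c_d$ on $d\bmod 4$ enters through $2^d\bmod 15$, which governs the residues of $p$ modulo $3$ and $5$ appearing in the reciprocity constants.

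Finally, for the class number bounds I combine the inequalities $M_{d_0}(p,H)\leq\frac{\pi^2}{2}\prod_{q\mid d_0}(1-q^{-2})$ (the corrections being $\leq 0$) with the lower bound $D_{d_0}(p,H)\geq 1$ in \eqref{boundhKminusd0}. To establish the latter I use Lemma \ref{formulaPi}: since $({\mathbb Z}/p{\mathbb Z})^*/H$ is cyclic of order $m$ with $m/2$ odd, its unique element of order two is the image of $-1$; hence whenever the order $g$ of a prime $q\nmid p$ in this quotient is even, $q^{g/2}$ must be that element, i.e. $-q^{g/2}\in H$, so the first (larger than $1$) case of Lemma \ref{formulaPi} applies. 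As $H$ lies in the group of quadratic residues ($d\mid (p-1)/2=2^{d-1}-1$ by Fermat), $g$ is even precisely when $q$ is a quadratic non-residue modulo $p$. This holds for $q=3$ for every such $p$ (because $p\equiv 7\pmod{12}$), giving $\Pi_3\geq 1$ and $h_K^-\leq 2(p/9)^{m/4}$; for $q=5$ it holds exactly when $p\equiv 2\pmod 5$, that is when $d\equiv 3\pmod 4$, which is why the clean bound $h_K^-\leq 2(8p/75)^{m/4}$ is asserted only in that case. Applying \eqref{boundhKminusd0} with $w_K=2$ then finishes the proof.
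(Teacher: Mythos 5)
Your proposal is correct and follows essentially the same route as the paper: the case $d_0=1$ is the specialization $a=2$ of Lemma \ref{(a^d-1)/(a-1)} (i.e.\ \eqref{Mpd}), the cases $d_0\in\{3,15\}$ reduce via \eqref{formulaMd0pH} to evaluating $S(H_\delta,\delta p)$ by the reciprocity chain \eqref{scddc}, \eqref{s1d} and geometric series---exactly the content of the paper's Lemma \ref{Mersennesq=1,3,5,15}---and the class-number bounds use Lemma \ref{formulaPi} together with quadratic reciprocity to get $\Pi_3(p,H)\geq 1$ for all such $p$ and $\Pi_5(p,H)\geq 1$ when $d\equiv 3\pmod 4$, just as in the paper. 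Your descriptions of $H_\delta$ (as $\langle 2\rangle$ for $\delta\in\{3,5\}$ and $\langle 2\rangle\sqcup t\langle 2\rangle$ for $\delta=15$) coincide with the paper's explicit coset enumerations, so the difference is purely notational.
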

}}

\begin{proof}
By (\ref{formulaMd0pH}) we have
\begin{equation}\label{Md0Nd0'(f,H)}
M_{d_0}(p,H)
=\frac{\pi^2}{2}\left\{\prod_{q\mid d_0}\left (1-\frac{1}{q^2}\right )\right\}
\left (1+\frac{N_{d_0}'(p,H)}{p}\right ),
\end{equation}
where for $H$ a subgroup of odd order of the multiplicative group $({\mathbb Z}/f{\mathbb Z})^*$ we set
\begin{equation}\label{Nd0'(f,H)}
N_{d_0}'(f,H)
:=-f
+\frac{4\mu(d_0)}{\prod_{q\mid d_0}(q+1)}
\sum_{\delta\mid d_0}\frac{\delta\mu(\delta)}{\phi(\delta)}S(H_\delta,\delta f) .
\end{equation}
The formulas for $M(p,H),M_3(p,H)$ and $M_{15}(p,H)$ follow from (\ref{Md0Nd0'(f,H)}) and Lemma \ref{Mersennesq=1,3,5,15} below. 
The upper bounds on $h_K^-$ follow from (\ref{boundhKminusd0}) and Lemma \ref{formulaPi} 
according to which $\Pi_q(p,H)\geq 1$ if $q$ is of even order in the quotient group $G/H$, 
where $G=({\mathbb Z}/p{\mathbb Z})^*$,
hence if $q$ is of even order in the group $G$. 
Now, since $p\equiv 3\pmod 4$ the group $G$ is of order $p-1=2N$ with $N$ odd 
and $q$ is of even order in $G$ if and only $q^N=-1$ in $G$, 
i.e. if and only if the Legendre symbol $\left (\frac{q}{p}\right )$ is equal to $-1$. 
Now, since $p=2^d-1\equiv-1\equiv 3 \pmod 4$ for $d\geq 3$, the law of quadratic reciprocity gives 
$\left (\frac{3}{p}\right ) =-\left (\frac{p}{3}\right ) =-\left (\frac{1}{3}\right ) =-1$, 
as $p\equiv (-1)^d-1\equiv -2\equiv 1\pmod 3$. Hence, $\Pi_3(p,H)\geq 1$. 
In the same way, if $d\equiv 3\pmod 4$ then 
$p=2^d-1 =2\cdot 4^\frac{d-1}{2}-1\equiv 2\cdot (-1)^\frac{d-1}{2}-1\equiv -3\equiv 2\pmod 5$ 
and $\left (\frac{5}{p}\right ) =\left (\frac{p}{5}\right ) =\left (\frac{2}{5}\right ) =-1$ 
and $\Pi_5(p,H)\geq 1$.
\end{proof}

\begin{lemma}\label{Mersennesq=1,3,5,15}
Set $f=2^d-1$ and $\varepsilon_d =(-1)^{(d-1)/2}$ with $d\geq 2$ odd. 
Hence $\gcd(f,15)=1$.
Set 
$H 
=\{2^k;\ 0\leq k\leq d-1\}$, 
a subgroup of order $d$ of the multiplicative group $({\mathbb Z}/f{\mathbb Z})^*$. 
Then,
\begin{equation}\label{S(H,fMersenne)}
S(H,f) 
=\frac{f-2d+1}{4}
\hbox{ and }
N'(f,H)
=-2d+1,
\end{equation} 
\begin{equation}\label{S(H3,3fMersenne)}
S(H_3,3f) 
=\frac{5f-6d+1}{6}
\hbox{ and }
N_3'(f,H) =-d,
\end{equation}
\begin{equation}\label{S(H5,5fMersenne)}
S(H_5,5f) 
=\frac{7f-10d+2+\varepsilon_d}{5} 
\hbox{ and }
N_5'(f,H) 
=-\frac{4}{3}d+\frac{1+\varepsilon_d}{6},
\end{equation}
\begin{equation}\label{S(H15,15fMersenne)}
S(H_{15},15f) 
=\frac{14f-\left (12+3\varepsilon_d\right )d+1}{3}
\hbox{ and }
N_{15}'(f,H)
=-\frac{32+15\varepsilon_d}{48}d
+\frac{1-2\varepsilon_d}{48}.
\end{equation}
\end{lemma}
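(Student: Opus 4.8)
The plan is to reduce everything to the evaluation of the Dedekind-sum averages $S(H_\delta,\delta f)$ for $\delta\mid 15$, since the values $N'$, $N_3'$, $N_5'$, $N_{15}'$ then drop out by substituting these into the definition \eqref{Nd0'(f,H)} of $N_{d_0}'$ and collecting terms: the coefficients of $f$ cancel against the leading $-f$, leaving the asserted affine expressions in $d$. The case $\delta=1$ needs no new work. Specializing Lemma~\ref{(a^d-1)/(a-1)} at $a=2$ (so that $f=2^d-1$) gives $S(H,f)=\frac{a+1}{a-1}\cdot\frac{f-(d-1)a-1}{12}=\frac{f-2d+1}{4}$, whence $N'(f,H)=-f+4S(H,f)=-2d+1$, which is \eqref{S(H,fMersenne)}.

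For each $d_0\in\{3,5,15\}$ the core task is to compute $S(H_{d_0},d_0 f)=\sum_{h\in H_{d_0}}s(h,d_0 f)$. First I would use the Chinese Remainder Theorem to describe $H_{d_0}=s_{d_0}^{-1}(H)$ explicitly: for each exponent $0\le k\le d-1$ the elements of $H_{d_0}$ reducing to $2^k$ modulo $f$ are exactly the $\phi(d_0)$ units $h\equiv 2^k\pmod f$, $h\equiv c\pmod{d_0}$ with $c\in(\mathbb{Z}/d_0\mathbb{Z})^*$. Concretely these are the lifts $h=2^k+jf$, $0\le j<d_0$, for which $2^k+jf$ is coprime to $d_0$; since $\gcd(f,d_0)=1$, exactly $\phi(d_0)$ of the $d_0$ values of $j$ qualify. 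Each individual $s(2^k+jf,d_0 f)$ is then evaluated by iterating the reciprocity law \eqref{scddc}, exactly as in the proof of Lemma~\ref{(a^d-1)/(a-1)}: using $2^d\equiv 1\pmod f$ one reduces $d_0 f$ modulo $2^k+jf$, then reduces again, until the chain terminates at base sums $s(1,\cdot)$ and $s(2,\cdot)$ evaluated by \eqref{s1d}. As there, I expect each $s(2^k+jf,d_0 f)$ to come out as a rational combination of $2^k$ and $2^{-k}$ (the latter understood as the rational $1/2^k$) plus constants.

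The summation over $k$ is then carried out by the geometric identities $\sum_{k=1}^{d-1}2^k=f-1$ and $\sum_{k=1}^{d-1}2^{-k}=\frac{f-1}{f+1}$, as at the end of the proof of Lemma~\ref{(a^d-1)/(a-1)}. The one genuinely new feature is the dependence on $d\bmod 4$: for $d_0\in\{5,15\}$ the admissible lift $j$ in each residue class, hence the contribution, is governed by $2^k\bmod 5$, whose period is $4$; summing over $0\le k\le d-1$ therefore leaves a remainder depending on $d\bmod 4$, which is precisely the origin of the factor $\varepsilon_d=(-1)^{(d-1)/2}$ in \eqref{S(H5,5fMersenne)} and \eqref{S(H15,15fMersenne)}. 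Once the four $S(H_\delta,\delta f)$ are in hand, substituting into \eqref{Nd0'(f,H)} gives $N_3',N_5',N_{15}'$ by short algebra.

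The main obstacle is the bookkeeping in the middle step. Tracking the sign and absolute-value terms in each application of \eqref{scddc}, selecting the correct lift $j$ in each residue class modulo $d_0$, and controlling parities are delicate; this is compounded for $d_0=15$, where the reciprocity chains are longer and the residues modulo $3$ and modulo $5$ must be handled simultaneously (eight lifts per $k$). The computation is routine in principle but lengthy and error-prone, so I would organize it by carrying out $d_0=3$ and $d_0=5$ in full first, then assembling $d_0=15$ from the same lift-by-lift analysis, using the consistency of \eqref{Nd0'(f,H)} at $d\equiv 1,3\pmod 4$ as an internal check.
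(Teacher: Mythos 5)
Your proposal is correct and follows essentially the same route as the paper: specializing Lemma~\ref{(a^d-1)/(a-1)} at $a=2$ for $\delta=1$, describing $H_{\delta}$ via CRT lifts $2^k+jf$, evaluating each $s(2^k+jf,\delta f)$ by chains of the reciprocity law \eqref{scddc} down to \eqref{s1d}, summing the resulting geometric series (with the $d\bmod 4$ periodicity of $2^k\bmod 5$ producing $\varepsilon_d$), and then extracting the $N_{d_0}'$ values from \eqref{Nd0'(f,H)}. The paper likewise carries out only the $\delta=3$ case in full and a sample sub-sum for $\delta=15$, so your deferral of the remaining bookkeeping matches its level of detail.
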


\begin{proof}
The first assertion is the special case $a=2$ of Lemma \ref{(a^d-1)/(a-1)}. 
Let us now deal with the second assertion.
Here 
$H_3
=\{2^k;\ 0\leq k\leq d-1\}
\cup\{2^k+(-1)^kf;\ 0\leq k\leq d-1\}$.
We assume that $0\leq k\leq d-1$. 
Hence, ${\rm sign}(2^k+(-1)^kf)=(-1)^k$.

\noindent {\bf 1.} 
Noticing that $3f\equiv -3\pmod {2^k}$, 
by (\ref{scddc}) we obtain 
$$s(2^k,3f)
=\frac{4^k+9f^2-9\cdot 2^k\cdot f+1}{36\cdot 2^k\cdot f}+s(3,2^k).$$
Noticing that $2^k\equiv (-1)^k\pmod 3$, 
by (\ref{scddc}) and (\ref{s1d}) we obtain 
$$s(3,2^k)
=\frac{9+4^k-9\cdot 2^k+1}{36\cdot 2^k}-(-1)^ks(1,3)
=\frac{9+4^k-9\cdot 2^k+1}{36\cdot 2^k}-\frac{(-1)^k}{18}.$$
Hence 
$$s(2^k,3f)
=\frac{f+1}{36f}2^k
+\frac{(f+1)(9f+1)}{36f}2^{-k}
-\frac{1}{2}
-\frac{(-1)^k}{18}.$$ 

\noindent {\bf 2.} 
Noticing that $3f\equiv -3\cdot (-1)^k2^k\pmod {2^k+(-1)^kf}$, 
by (\ref{scddc}) we obtain 
\begin{align*}
s(2^k+(-1)^kf,3f)
=&\frac{2^k+(-1)^kf}{36f}
+\frac{f}{4(2^k+(-1)^kf)}
-\frac{(-1)^k}{4}
+\frac{1}{36(2^k+(-1)^kf)f}\\
&+(-1)^ks(3\cdot 2^k,2^k+(-1)^kf)
\end{align*}
and noticing that $2^k+(-1)^kf\equiv (-1)^{k-1}\pmod {3\cdot 2^k}$, 
by (\ref{scddc}) we obtain 
\begin{align*}
s(3\cdot 2^k,2^k+(-1)^kf)
=&\frac{3\cdot 2^k}{12(2^k+(-1)^kf)}
+\frac{2^k+(-1)^kf}{36\cdot 2^k}
-\frac{(-1)^k}{4} \\
&+\frac{1}{36\cdot 2^k\cdot (2^k+(-1)^kf)}+(-1)^ks(1,3\cdot 2^k).
\end{align*}
Using (\ref{s1d}) we finally obtain
$$s(2^k+(-1)^kf,3f)
=\frac{9f+1}{36f}2^k
+\frac{(f+1)^2}{36f}2^{-k}
-\frac{1}{2}
+\frac{(-1)^k}{18}.$$

\noindent {\bf 3.}
Using $\sum_{k=0}^{d-1}2^k =f$, $\sum_{k=0}^{d-1}2^{-k}=\frac{2f}{f+1}$ and $\sum_{k=0}^{d-1}(-1)^k=1$,
we obtain 
$$\sum_{k=0}^{d-1}s(2^k,3f)
=\frac{19f-18d+1}{36}
\hbox{ and }
\sum_{k=0}^{d-1}s(2^k+(-1)^kf,3f)
=\frac{11f-18d+5}{36}.$$
Hence, we do obtain
$$S(H_3,3f) 
=\sum_{h\in H_3}s(h,3f)
=\frac{19f-18d+1}{36}
+\frac{11f-18d+5}{36}
=\frac{5f-6d+1}{6}$$
and $N_3'(f,H) =-d$, by (\ref{Nd0'(f,H)}).

Let us finally deal with the third and fourth assertions. 
The proof involves tedious and repetitive computations. 
For this reason we will restrict ourselves to a specific case.
Let us for example give some details for the proof of (\ref{S(H15,15fMersenne)}) 
in the case that $d\equiv 1\pmod 4$. 
We have $f=2^d-1\equiv 1\pmod {30}$ and 
$H_{15}=\cup_{l=0}^{14}E_l$, 
where 
$E_l
:= \{2^k+lf;\ 0\leq k\leq d-1,\ \gcd(2^k+l,15)=1\}$ for $0\leq l\leq 14$. 
We have to compute the sums $s_l:=\sum_{n\in E_l}s(n,15f)$. 
Let us for example give some details in the case that $l=1$. 
We have $\gcd (2^k+1,15)=1$ if and only if $k\equiv 0\pmod 4$. 
Hence $s_1=\sum_{k=0}^{(d-1)/4}s(16^{k}+f,15f)$. 
Using (\ref{scddc}) and (\ref{s1d}) we obtain
$$s(16^k+f,15f)
=\frac{9f+1}{180f}16^k
+\frac{14}{45}
+\frac{(f+1)^2}{180f}16^{-k}.$$
Finally, using
$\sum_{k=0}^{(d-1)/4}16^k
=\frac{8f+7}{15}$ 
and 
$\sum_{k=0}^{(d-1)/4}16^{-k}
=\frac{2(8f+7)}{15(f+1)}$ 
we obtain
$$s_1
=\sum_{k=0}^{(d-1)/4}s(16^k+f,15f)
=\frac{88f^2+(210d+731)f+21}{2700f}.$$
Finally, using 
(\ref{Nd0'(f,H)}), 
(\ref {S(H,fMersenne)}), 
(\ref{S(H3,3fMersenne)})
and (\ref{S(H5,5fMersenne)}) 
we get (\ref{S(H15,15fMersenne)}).
\end{proof}

We conclude this Section with the following result for $d_0 =3\cdot 5\cdot 7 =105$, 
whose long proof we omit\footnote{The formulas can be and have been checked on numerous examples 
using a computer algebra system. 
Indeed, by (\ref{scddc}) and (\ref{s1d}) any Dedekind sum $s(c,d)\in {\mathbb Q}$ with $c,d\geq 1$ 
can be easily computed 
by successive euclidean divisions of $c$ by $d$ and exchanges of $c$ and $d$, 
until we reach $c=1$. }:

\begin{lemma}\label{Mersennesq=105}
Set $f =2^d-1$ with $d>1$ odd. 
Assume $\gcd (f,105)=1$, i.e. that $d\equiv 1, 5, 7, 11\pmod{12}$.
Set 
$H 
=\{2^k;\ 0\leq k\leq d-1\}$, 
a subgroup of order $d$ of the multiplicative group $({\mathbb Z}/f{\mathbb Z})^*$. 
Then 
\begin{equation*}
N_{105}'(f,H)
=-\frac{1}{576}
\times\begin{cases}
437d+139&\hbox{if $d\equiv 1\pmod {12}$},\\
535d-644&\hbox{if $d\equiv 5\pmod{12}$},\\
97d-324&\hbox{if $d\equiv 7\pmod{12}$},\\
195d+13&\hbox{if $d\equiv 11\pmod {12}$.}
\end{cases}
\end{equation*}
\end{lemma}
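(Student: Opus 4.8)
The plan is to reduce the statement to the evaluation of a few new sums of Dedekind sums and then, on each of them, to imitate the iterated–reciprocity computation already carried out in the proof of Lemma~\ref{Mersennesq=1,3,5,15}. Writing $d_0=105=3\cdot5\cdot7$ in \eqref{Nd0'(f,H)}, we have $\mu(105)=-1$ and $\prod_{q\mid105}(q+1)=4\cdot6\cdot8=192$, so that
$$N_{105}'(f,H)=-f-\frac{1}{48}\sum_{\delta\mid105}\frac{\delta\mu(\delta)}{\phi(\delta)}\,S(H_\delta,\delta f),$$
where the eight divisors $\delta\in\{1,3,5,7,15,21,35,105\}$ occur with the rational weights $1,-\tfrac32,-\tfrac54,-\tfrac76,\tfrac{15}{8},\tfrac74,\tfrac{35}{24},-\tfrac{35}{16}$. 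Four of the required sums, namely $S(H,f)$, $S(H_3,3f)$, $S(H_5,5f)$ and $S(H_{15},15f)$, are already given by \eqref{S(H,fMersenne)}, \eqref{S(H3,3fMersenne)}, \eqref{S(H5,5fMersenne)} and \eqref{S(H15,15fMersenne)}. Hence the whole task is to establish closed formulas for the four remaining sums $S(H_7,7f)$, $S(H_{21},21f)$, $S(H_{35},35f)$ and $S(H_{105},105f)$, each of the same shape as before: a leading term proportional to $f$, a term linear in $d$, and a bounded term depending only on $d$ modulo a small period.

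For each such $\delta$ I would describe $H_\delta$ explicitly as in the $d_0=15$ case: an element of $H_\delta$ is a representative in $[1,\delta f]$ that is $\equiv 2^k\pmod f$ and coprime to $\delta$, hence of the form $2^k+lf$ with $0\le k\le d-1$, $0\le l\le\delta-1$ and $\gcd(2^k+l,\delta)=1$ (there being $\phi(\delta)$ admissible lifts $l$ for each $k$). Since $\mathrm{ord}_3(2)=2$, $\mathrm{ord}_5(2)=4$ and $\mathrm{ord}_7(2)=3$, the residue $2^k\bmod\delta$, and therefore the admissible lifts, is periodic in $k$ with period dividing $\mathrm{ord}_{105}(2)=\mathrm{lcm}(2,4,3)=12$; this is exactly what produces the four cases $d\equiv1,5,7,11\pmod{12}$, the values excluded by $\gcd(f,105)=1$ being precisely those with $3\mid d$. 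For a fixed class I would evaluate $s(2^k+lf,\delta f)$ by repeated use of the reciprocity law \eqref{scddc}, reducing the modulus at each step. The reductions close up because $f\equiv-1\pmod{2^k}$ for $0\le k\le d-1$ and because $2^k$ runs through a fixed, explicit pattern of residues modulo $\delta$, so that after finitely many steps one is left only with constant sums $s(1,\cdot)$ and $s(2,\cdot)$, substituted from \eqref{s1d}. Each $s(2^k+lf,\delta f)$ thereby becomes an explicit rational combination of $2^k$, $2^{-k}$, $f$ and the sign $\varepsilon_d=(-1)^{(d-1)/2}$; summing over $k$ with $\sum_{k=0}^{d-1}2^k=f$, $\sum_{k=0}^{d-1}2^{-k}=\tfrac{2f}{f+1}$ and the relevant periodic sums yields the closed form for $S(H_\delta,\delta f)$. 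Inserting the eight sums into the displayed formula for $N_{105}'$, and checking that all $f$-contributions cancel (as they must, since the answer is $O(d)$), produces the four stated expressions.

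The main obstacle is bookkeeping rather than conceptual: the sum $S(H_{105},105f)$ ranges over the $\phi(105)=48$ residue classes modulo $105$, and each requires its own chain of reciprocity steps through a longer sequence of intermediate moduli than in the prime or $d_0=15$ cases, with the admissible classes and the intervening signs depending on $d\bmod12$. Organizing these reductions without arithmetic error, and correctly tracking the $\varepsilon_d$-dependence through the composite moduli $21f$, $35f$ and $105f$, is the delicate part; the cancellation of all $f$-terms, and the recovery of the simpler formulas \eqref{S(H3,3fMersenne)}--\eqref{S(H15,15fMersenne)} as internal consistency checks, together with a machine verification for many admissible $d$ (as in the footnote to the statement), are the natural safeguards. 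It is precisely this volume of routine but unforgiving computation that justifies omitting the full proof.
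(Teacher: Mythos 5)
Your reduction via \eqref{Nd0'(f,H)} (with the correct prefactor $-\tfrac{1}{48}$ and weights $\tfrac{\delta\mu(\delta)}{\phi(\delta)}$ over the eight divisors of $105$), your reuse of the four sums already computed in Lemma~\ref{Mersennesq=1,3,5,15}, and your plan to evaluate $S(H_7,7f)$, $S(H_{21},21f)$, $S(H_{35},35f)$, $S(H_{105},105f)$ by the same iterated application of \eqref{scddc} and \eqref{s1d}, with the period-$12$ case split coming from $\mathrm{ord}_{105}(2)=12$, is exactly the method the paper intends: it omits this ``long proof'' precisely because it is the same unforgiving bookkeeping, verified by computer algebra as its footnote indicates. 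Your proposal is correct and coincides with the paper's approach, stopping where the paper itself stops.
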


Lemmas \ref {Mersennesq=1,3,5,15}-\ref{Mersennesq=105} 
show that the following Conjecture holds true for $d_0\in\{1,3,5,15,105\}$:

\noindent\frame{\vbox{
\begin{Conjecture}\label{conjectureMersenne}
Let $d_0\geq 1$ be odd and square-free. 
Let $N$ be the order of $2$ in the multiplicative group $({\mathbb Z}/d_0{\mathbb Z})^*$. 
Set $f =2^d-1$ with $d>1$ odd 
and
$H 
=\{2^k;\ 0\leq k\leq d-1\}$, 
a subgroup of order $d$ of the multiplicative group $({\mathbb Z}/f{\mathbb Z})^*$. 
Assume $\gcd (f,d_0)=1$. 
Then $N_{d_0}'(f,H)
=A_1(d)d+A_0(d)$, 
where $A_1(d)$ and $A_0(d)$ are rational numbers which depend only on $d$ modulo $N$, 
i.e. only on $f$ modulo $d_0$. Hence for a prime $p\geq 3$ we expect
$$M_{d_0}(p,H)
=\frac{\pi^2}{2}\left\{\prod_{q \mid d_0}
\left(1-\frac{1}{q^2}\right)\right\}\left(1+\frac{A_1(d)d}{p}+\frac{A_0(d)}{p}\right),$$ 
confirming again that the restriction on $d$ in Theorem \ref{asympd0} should be sharp.
\end{Conjecture}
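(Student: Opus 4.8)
The plan is to prove the conjecture by generalizing the explicit Dedekind-sum evaluations of Lemma \ref{Mersennesq=1,3,5,15} and Lemma \ref{Mersennesq=105}, which settled the cases $d_0\in\{1,3,5,15,105\}$. The rigid starting point is the \emph{exact} expression (\ref{Nd0'(f,H)}), which writes $N_{d_0}'(f,H)$ as a fixed $\mathbb{Q}$-linear combination (over the squarefree divisors $\delta\mid d_0$) of the sums $S(H_\delta,\delta f)=\sum_{h\in H_\delta}s(h,\delta f)$. Since $d_0$ is fixed, it suffices to show that each $S(H_\delta,\delta f)$ takes the shape (constant)$\cdot f$ $+$ (affine in $d$, with coefficients periodic in $d\bmod N$), and that after insertion into (\ref{Nd0'(f,H)}) the $f$-proportional parts cancel the $-f$ term, leaving $A_1(d)d+A_0(d)$. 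I would work throughout with general $f=2^d-1$, since the Lemmas establish a pure Dedekind-sum identity that is only afterwards specialised to prime $p$.

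First I would parametrise $H_\delta$. Because $\gcd(\delta,f)=1$, the CRT identifies $H_\delta=s_\delta^{-1}(H)$ with the set of $h\equiv 2^k\pmod f$ ($0\le k\le d-1$) together with an arbitrary unit residue modulo $\delta$; writing $h=2^k+lf$ with $0\le l<\delta$ and $\gcd(2^k+lf,\delta)=1$, the admissible $l$ for a given $k$ depend only on $2^k\bmod\delta$, hence form a pattern periodic in $k$ of period $\mathrm{ord}_\delta(2)$, a divisor of $N$. For each admissible $h$ I would evaluate $s(2^k+lf,\delta f)$ by iterating the reciprocity law (\ref{scddc}) together with (\ref{s1d}) and (\ref{cc*}), exactly as in the three-step reductions of Lemma \ref{Mersennesq=1,3,5,15}. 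The structural claim is that the reciprocity (Euclidean) chain for the pair $(\delta f,\,2^k+lf)$ has length bounded independently of $d$: each step replaces $\delta f$ by a small $\delta$- and $l$-controlled multiple of $2^k$, then by $f_k:=2^k-1$ or a constant, yielding a closed form whose terms are proportional to $f$, to $2^k$, to $2^{-k}$, or constant, with the constants and signs depending on $k$ only through $k\bmod\mathrm{ord}_\delta(2)$ and on the parity datum $\varepsilon_d=(-1)^{(d-1)/2}$.

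Next I would sum over $k=0,\dots,d-1$ (restricted to the admissible residues) using the exact identities $\sum_{k=0}^{d-1}2^k=f$, $\sum_{k=0}^{d-1}2^{-k}=2f/(f+1)$, and $\sum_{k=0}^{d-1}(\text{period-}t\text{ sequence})=(d/t)(\text{period sum})+(\text{bounded correction})$. The geometric sums produce terms exactly proportional to $f$ and to $1/(f+1)$, while the periodic constant and sign terms produce a piece linear in $d$ plus a correction that depends only on $d\bmod N$. Summing over $\delta\mid d_0$ and substituting into (\ref{Nd0'(f,H)}), I expect the $f$-proportional contributions to annihilate the $-f$ and the $1/(f+1)$-type remainders to cancel among themselves, leaving precisely $A_1(d)d+A_0(d)$ with coefficients depending only on $d\bmod N$.

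The hard part will be making the bounded-chain claim uniform and explicit across all $\delta\mid d_0$, rather than case by case as in the cited Lemmas: one must track how the signs and the residues $2^k\bmod\delta$ propagate along the reciprocity chain, and verify that every periodic or sign contribution that survives has period dividing $N$ (for instance, that the $\varepsilon_d$ of period $4$ only arises when $4\mid N$, as happens via $\mathrm{ord}_5(2)=4$). Above all, one must prove the \emph{exact} cancellation of the $f$-proportional and $1/(f+1)$-type terms after summation over $k$, $l$ and $\delta$ — this is the reason $N_{d_0}'(f,H)$ is a rational number with no error term, and it is what forces $d$ to enter only linearly and only through $d\bmod N$. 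Establishing this cancellation in general, rather than verifying it for each individual $d_0$ (where the number of divisors and hence the combinatorial bookkeeping grows), is exactly what keeps the statement at the level of a conjecture.
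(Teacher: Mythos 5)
The statement you were asked to prove is a \emph{Conjecture} in the paper: there is no proof to compare against, only verifications for $d_0\in\{1,3,5,15,105\}$ (Lemmas \ref{Mersennesq=1,3,5,15} and \ref{Mersennesq=105}), obtained by exactly the route you outline --- the exact formula \eqref{Nd0'(f,H)}, the CRT description of $H_\delta$, bounded reciprocity chains via \eqref{scddc}, \eqref{s1d} and \eqref{cc*}, and geometric summation over $k$. Your plan is therefore essentially the paper's own method, and your closing admission that the uniform cancellation of the $f$-proportional terms across all $\delta\mid d_0$ is not established --- so that the statement remains a conjecture --- coincides with the paper's own assessment that ``a more unified approach seems to be required to give a complete proof''; like the authors, you have a strategy plus case evidence, not a proof.
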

}}

There is apparently no theoretical obstruction preventing us to prove Conjecture \ref{conjectureMersenne}. 
Indeed, for a fixed $d_0$, the formulas for $A_0(d)$ and $A_1(d)$ could be guessed 
using numerous examples on a computer algebra system. 
However for large $d_0$'s the set of cases to consider grows linearly 
and a more unified approach seems to be required to give a complete proof.

\section{The case of subgroups of order $d=3$}\label{Sectiond=3}
\subsection{Formulas for $d_0=1,2,6$}
Let $p\equiv 1\pmod 6$ be a prime integer.
Let $K$ be the imaginary subfield of degree $m=(p-1)/3$ of the cyclotomic field ${\mathbb Q}(\zeta_p)$.
Since $p$ splits completely in the quadratic field ${\mathbb Q}(\sqrt{-3})$ 
of class number one, 
there exists an algebraic integer $\alpha =a+b\frac{1+\sqrt{-3}}{2}$ 
with $a,b\in {\mathbb Z}$ such that $p =N_{{\mathbb Q}(\sqrt{-3})/{\mathbb Q}}(\alpha) =a^2+ab+b^2$. 
Then, 
$H=\{1,a/b,b/a\}$, is the unique subgroup of order $3$ 
of the cyclic multiplicative group $({\mathbb Z}/p{\mathbb Z})^*$. 
So we consider the integers $f>3$ of the form 
$f=a^2+ab+b^2$, with $a,b\in {\mathbb Z}\setminus\{0\}$ and 
$\gcd(a,b)=1$,
which implies $\gcd(a,f) =\gcd (b,f)=1$ and the oddness of $f$. 
We have the following explicit formula.

\begin{lemma}\label{sabf}
Let $f>3$ be of the form 
$f=a^2+ab+b^2$, with $a,b\in {\mathbb Z}$ and $\gcd(a,b)=1$. 
Set $H=\left\{1,a/b,b/a\right\}$, a subgroup of order $3$ 
of the multiplicative group $({\mathbb Z}/f{\mathbb Z})^*$.
Then,
\begin{equation}\label{SfabH}
s(a,b,f)
=\frac{f-1}{12f},
\ S(H,f)
=\frac{f-1}{12}
\hbox{ and }
N(f,H) 
=-1+12S(H,f)
=-1.
\end{equation}
\end{lemma}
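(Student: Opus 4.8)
The plan is to prove the reciprocity identity $s(a,b,f)=\frac{f-1}{12f}$ directly, and then deduce $S(H,f)$ and $N(f,H)$ as formal consequences. Since $\gcd(a,b)=1$ and $f=a^2+ab+b^2$, I first record the arithmetic facts I will need: $f\equiv a^2\pmod b$ so that $f\equiv 0\pmod{\gcd(b,f)}$ forces $\gcd(b,f)=\gcd(a,f)=1$, and crucially $a^2+ab+b^2\equiv 0\pmod f$ gives the congruences $a^2\equiv -ab-b^2\pmod f$ and, dividing by $ab$, the relation $a/b+b/a\equiv -1\pmod f$ that identifies the three elements of $H=\{1,a/b,b/a\}$ as the cube roots of unity.

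First I would compute $s(a,b,f)$ via the Dedekind--Rademacher reciprocity law \eqref{sbcddcb}. Applying it to the triple $(a,b,f)$ (all pairwise coprime) gives
\begin{equation*}
s(a,b,f)+s(f,a,b)+s(b,f,a)=\frac{a^2+b^2+f^2-3|abf|}{12abf}.
\end{equation*}
The idea is that the two ``small modulus'' terms $s(f,a,b)$ and $s(b,f,a)$ simplify dramatically: modulo $b$ we have $f\equiv a^2$, so $s(f,a,b)=s(a^2,a,b)=s(a,b\cdot a^{-1}\!\!\pmod{\text{?}})$ — more cleanly, using \eqref{bcd=abacd} to clear the common factor $a$ (coprime to $b$) one reduces $s(f,a,b)=s(a^2,a,b)=s(a,1,b)=s(a,b)$, and likewise modulo $a$ one has $f\equiv b^2$, giving $s(b,f,a)=s(b,b^2,a)=s(1,b,a)=s(b,a)$. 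Thus the two extra terms collapse to the ordinary Dedekind sum pair $s(a,b)+s(b,a)$, which by the classical reciprocity law \eqref{scddc} equals $\frac{a^2+b^2-3|ab|+1}{12ab}$.

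Substituting this back, $s(a,b,f)=\frac{a^2+b^2+f^2-3|abf|}{12abf}-\frac{a^2+b^2-3|ab|+1}{12ab}$, and the plan is to put everything over $12abf$ and simplify the numerator using $f=a^2+ab+b^2$. After clearing denominators the $a^2+b^2$ and the $3|ab|f$ versus $3|abf|$ terms should cancel against the second fraction's contribution, leaving $\frac{f^2-f}{12abf}\cdot(\text{something})$; the target identity predicts the answer is $\frac{f-1}{12f}$, so the numerator must collapse to $(f-1)ab$. I expect this algebraic simplification — tracking signs of $|ab|$, $|abf|$ and confirming the $a^2+ab+b^2$ substitution clears everything — to be the main technical obstacle, though it is elementary. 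Once $s(a,b,f)=\frac{f-1}{12f}$ is established, the rest is immediate: the three sawtooth/cotangent sums $s(1,f)$, $s(a/b,f)=s(a,b,f)$ and $s(b/a,f)=s(b,a,f)=s(a,b,f)$ (the last equality by symmetry of the Dedekind--Rademacher sum and \eqref{cc*}) all feed into $S(H,f)=\sum_{h\in H}s(h,f)$; using $s(1,f)=\frac{f^2-3f+2}{12f}$ from \eqref{s1d} together with two copies of $\frac{f-1}{12f}$ gives $S(H,f)=\frac{f^2-3f+2+2(f-1)}{12f}=\frac{f^2-f}{12f}=\frac{f-1}{12}$, whence $N(f,H)=-3+\frac{2}{f}+12S'(H,f)$ with $S'(H,f)=S(H,f)-s(1,f)$, or more directly $N(f,H)=-1+12S(H,f)=-1+(f-1)=-1$ as claimed.
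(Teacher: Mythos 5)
Your proposal is correct and takes essentially the same route as the paper: apply Dedekind--Rademacher reciprocity \eqref{sbcddcb} to the triple $(a,b,f)$, collapse $s(f,a,b)=s(a^2,a,b)=s(a,b)$ and $s(b,f,a)=s(b,b^2,a)=s(b,a)$ via \eqref{bcd=abacd} and the congruences $f\equiv a^2\pmod b$, $f\equiv b^2\pmod a$, invoke classical reciprocity \eqref{scddc}, and the algebra does collapse exactly as you predict since $f-a^2-b^2=ab$, after which $S(H,f)=s(1,f)+2s(a,b,f)$ and \eqref{s1d} finish the job. One small caution: your closing aside ``$N(f,H)=-1+12S(H,f)=-1+(f-1)=-1$'' merely reproduces a typo in the lemma statement (the direct formula should read $N(f,H)=-f+12S(H,f)$); your primary derivation via $N(f,H)=-3+\frac{2}{f}+12S'(H,f)$ with $S'(H,f)=S(H,f)-s(1,f)$ is the correct one and indeed yields $-1$.
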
 

\begin{proof}
Noticing that 
$s(b,f,a) =s(b,b^2,a) =s(1,b,a) =s(b,a)$, 
by (\ref{bcd=abacd}),
and $s(f,a,b) =s(a^2,a,b) =s(a,1,b) =s(a,b)$, 
and using (\ref{scddc}), we obtain 
\begin{eqnarray*}
s(a,b,f)
&=&\frac{a^2+b^2+f^2-3\vert ab\vert f}{12abf}-s(b,f,a)-s(f,a,b)
\ \ \ \ \ \text{(by (\ref{sbcddcb}))}\\
&=&\frac{a^2+b^2+f^2-3\vert ab\vert f}{12abf}
-s(b,a)-s(a,b)\\
&=&\frac{a^2+b^2+f^2-3\vert ab\vert f}{12abf}
-\frac{a^2+b^2-3\vert ab\vert+1}{12ab}
=\frac{f-1}{12f}.
\end{eqnarray*}
Finally, $S(H,f)=s(1,f)+s(a,b,f)+s(b,a,f) =s(1,f)+2s(a,b,f)$ 
and use (\ref{s1d}) and (\ref{defNd0fHter}).
\end{proof}

\begin{Remark}\label{dedekindtwosizes}
Take $f_1=A^2+AB+B^2>0$, where $3\nmid f_1$ and $\gcd (A,B)=1$.
Set $f=(f_1+1)^3-1$. 
Then $f=a^2+ab+b^2$, 
where $a=Af_1+A-B$, $b=Bf_1+A+2B$ and $\gcd(a,b)=1$.
By Lemmas \ref{sabf} we have an infinite family of moduli $f$ for which the multiplicative group 
$({\mathbb Z}/f{\mathbb Z})^*$ contains at the same time 
an element $h=a/b$ of order $d=3$ for which $s(h,f)$ is asymptotic to $1/12$ 
and an element $h'=f_1+1$ of order $d=3$ for which $s(h',f)$ is asymptotic to $f^{2/3}/12$.
Indeed by \eqref{scddc} and \eqref{s1d} for $f=(f_1+1)^3-1$ we have
$s(h',f) =\frac{h'^5+h'^4-6h'^3+6}{12f}$. 
\end{Remark} 

To deal with the case $d_0>1$, we notice that by (\ref{defNd0fHter}) we have: 

\begin{proposition}\label{Nd0abf}
Let $d_0\geq 1$ be a given squarefree integer.
Take $f>3$ odd of the form $f =a^2+ab+b^2$, where $\gcd (a,b)=1$ and $\gcd (d_0,f)=1$. 
Set $H=\{1,a/b,b/a\}$, a subgroup of order $3$ of the multiplicative group $({\mathbb Z}/f{\mathbb Z})^*$. 
Let $N_{d_0}(f,H)$ be the rational number defined in (\ref{defNd0fH}).
Then 
$$N_{d_0}(f,H)
=N_{d_0}(f,\{1\})
+\frac{24\mu(d_0)}{\prod_{q\mid d_0}(q+1)}
\sum_{\delta\mid d_0}\frac{\delta\mu(\delta)}{\phi(\delta)}S(a,b,\delta f),$$
where $N_{d_0}(f,\{1\})$ is a rational number which depends only on $f$ modulo $d_0$, 
by Proposition \ref{H=1}, 
and where 
$$S(a,b,\delta f)
=\sum_{h\in ({\mathbb Z}/\delta f{\mathbb Z})^*\atop h\equiv a/b\pmod f} s(h,\delta f)
=\sum_{h\in ({\mathbb Z}/\delta f{\mathbb Z})^*\atop h\equiv b/a\pmod f} s(h,\delta f).$$
\end{proposition}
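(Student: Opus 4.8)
The plan is to deduce everything directly from the decomposition formula (\ref{defNd0fHter}), so that the only real work is to rewrite the quantity $S'(H_\delta,\delta f)$ occurring there in terms of the single sum $S(a,b,\delta f)$. First I would record the group-theoretic input. Since $H=\{1,a/b,b/a\}$ is cyclic of order $3$, its generator $a/b$ satisfies $(a/b)^{-1}=(a/b)^2=b/a$ in $(\mathbb{Z}/f\mathbb{Z})^*$, so that $H\setminus\{1\}=\{a/b,b/a\}$ consists of exactly two elements that are inverses of one another. Consequently $H_\delta'=s_\delta^{-1}(H\setminus\{1\})$ is the disjoint union of the two residue classes of elements $h\in(\mathbb{Z}/\delta f\mathbb{Z})^*$ with $h\equiv a/b\pmod f$, respectively $h\equiv b/a\pmod f$, whence
\[
S'(H_\delta,\delta f)=\sum_{h\equiv a/b\pmod f}s(h,\delta f)+\sum_{h\equiv b/a\pmod f}s(h,\delta f).
\]

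Next I would show that the two sums on the right coincide, which simultaneously proves that $S(a,b,\delta f)$ is well defined (the two displayed expressions for it agree) and supplies the extra factor of $2$. The key observation is that the inversion map $h\mapsto h^{-1}$ on $(\mathbb{Z}/\delta f\mathbb{Z})^*$ is a bijection carrying the class $h\equiv a/b\pmod f$ onto the class $h\equiv b/a\pmod f$: since the reduction morphism $s_\delta$ is a homomorphism, $s_\delta(h^{-1})=s_\delta(h)^{-1}=(a/b)^{-1}=b/a$, and inversion is an involution of the group, hence bijective between the two preimages. Because $h\cdot h^{-1}\equiv 1\pmod{\delta f}$, the symmetry (\ref{cc*}) gives $s(h^{-1},\delta f)=s(h,\delta f)$; reindexing the second sum by $h\mapsto h^{-1}$ therefore yields
\[
\sum_{h\equiv b/a\pmod f}s(h,\delta f)=\sum_{h\equiv a/b\pmod f}s(h,\delta f)=S(a,b,\delta f),
\]
so that $S'(H_\delta,\delta f)=2\,S(a,b,\delta f)$.

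Finally I would substitute this identity into (\ref{defNd0fHter}); the coefficient $12\mu(d_0)/\prod_{q\mid d_0}(q+1)$ becomes $24\mu(d_0)/\prod_{q\mid d_0}(q+1)$, which is exactly the claimed formula. The assertion that $N_{d_0}(f,\{1\})$ depends only on $f$ modulo $d_0$ is precisely the content of Proposition \ref{H=1}, which I would simply cite. There is no serious obstacle here: the whole argument rests on the order-$3$ structure of $H$ together with the inversion symmetry (\ref{cc*}) of Dedekind sums, and the only point requiring a little care is checking that inversion modulo $\delta f$ is compatible with passage to residues modulo $f$, so that it genuinely interchanges the two classes $a/b$ and $b/a$.
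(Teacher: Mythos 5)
Your proof is correct and follows exactly the route the paper intends: the paper derives the proposition directly from (\ref{defNd0fHter}), and your argument supplies the (omitted) details, namely the decomposition of $H_\delta'$ into the two residue classes $h\equiv a/b$ and $h\equiv b/a \pmod f$, the inversion bijection combined with the symmetry (\ref{cc*}) showing the two class sums agree, and the resulting factor $2$ turning the coefficient $12$ into $24$. Nothing is missing.
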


It seems that there are no explicit formulas for $S(a,b,\delta f)$, $S(H_\delta,\delta f)$ or $N_\delta(f,H)$ 
for $\delta>1$ 
(however, assuming that $b=1$ we will obtain such formulas in Section \ref{d3b1}
for $\delta\in\{2,3,6\}$). 
Instead, our aim is to prove in Proposition \ref{boundsSab} that $N_\delta(f,H) =O(\sqrt f)$ for $\delta\in\{2,3,6\}$.

Let $f>3$ be of the form 
$f=a^2+ab+b^2$, with $a,b\in {\mathbb Z}$ and $\gcd(a,b)=1$.
Hence, $a$ or $b$ is odd. 
Since 
$a^2+ab+b^2 
=a'^2+a'b'+b'^2$ 
$=a''^2+a''b''+b''^2$ 
and $a'/b' =a/b$ and $a''/b'' =a/b$ in $({\mathbb Z}/f{\mathbb Z})^*$, 
where $(a',b') =(-b,a+b)$ and $(a'',b'') =(-a-b,a)$, 
we may assume that both $a$ and $b$ are odd. 
Moreover, assume that $\gcd (3,f)=1$. 
If $3\nmid ab$, by swapping $a$ and $b$ as needed, 
which does not change neither $H$ nor $S(a,b,H)$, we may assume that 
$a\equiv -1\pmod 6$ and $b\equiv 1\pmod 6$.
If $3\mid ab$, by swapping $a$ and $b$ and then changing both $a$ and $b$ to their opposites as needed, 
which does not change neither $H$ nor $S(a,b,H)$, 
we may assume that $a\equiv 3\pmod 6$ and $b\equiv 1\pmod 6$. 
So in Proposition \ref{Nd0abf} we may restrict ourselves to the integers of the form 
\begin{multline}\label{deff}
f>3
\hbox{ is odd of the form } 
f=a^2+ab+b^2, 
\hbox{ with }
a,b\in {\mathbb Z}
\hbox{ odd }
\hbox{ and }
\gcd(a,b)=1\\
\hbox{and if $\gcd(3,f)=1$ then $a\equiv -1\hbox{ or }3\pmod 6$ and $b\equiv 1\pmod 6$}.
\end{multline}

\noindent\frame{\vbox{
\begin{proposition}\label{boundsSab}
Let $\delta\in\{2,3,6\}$ be given.
Let $f$ be as in (\ref{deff}), with $\gcd (f,\delta)=1$. 
Then, $s(h,\delta f) =O(\sqrt f)$ for any $h\in ({\mathbb Z}/\delta f{\mathbb Z})^*$ such that $h\equiv a/b\pmod f$. 
Consequently, for a given $d_0\in\{1,2,3,6\}$, in Proposition \ref{Nd0abf} we have $N_{d_0}(f,H) =O(\sqrt f)$, 
and we cannot expect great improvements on these bounds, by (\ref{N2fa1H}), (\ref{N3fa1H}) and (\ref{N6fa1H}).
\end{proposition}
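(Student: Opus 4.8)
The plan is to reduce the modulus. The trivial bound (\ref{boundscd}) is useless for $s(h,\delta f)$ directly, since it only gives $O(\delta f)=O(f)$, but it becomes effective once the modulus has been brought down to size $O(\sqrt f)$. The starting point is that $f=a^2+ab+b^2$ forces $|a|,|b|=O(\sqrt f)$ and makes $h\equiv a/b$ a primitive cube root of unity modulo $f$. First I would represent $h$ as a reduced fraction modulo $\delta f$, i.e. find integers $\tilde a,\tilde b$ of size $O(\sqrt f)$ with $\tilde a/\tilde b\equiv a/b\pmod f$ and $\tilde a/\tilde b\equiv h\pmod\delta$ (hence $\equiv h\pmod{\delta f}$), and with $\gcd(\tilde a,\tilde b)=\gcd(\tilde a,\delta f)=\gcd(\tilde b,\delta f)=1$, so that $s(h,\delta f)=s(\tilde a,\tilde b,\delta f)$. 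Granting such a representation, the Dedekind--Rademacher reciprocity law (\ref{sbcddcb}) applied to the triple $(\tilde a,\tilde b,\delta f)$ yields
$$s(\tilde a,\tilde b,\delta f)=\frac{\tilde a^2+\tilde b^2+(\delta f)^2-3|\tilde a\tilde b\delta f|}{12\tilde a\tilde b\delta f}-s(\delta f,\tilde a,\tilde b)-s(\tilde b,\delta f,\tilde a).$$
Here the two remaining sums have respective moduli $\tilde b$ and $\tilde a$, hence are $O(\sqrt f)$ by (\ref{boundscd}), while the explicit main term is also $O(\sqrt f)$: its dominant part is $\delta f/(12\tilde a\tilde b)$, which is $O(\sqrt f)$ because every nonzero vector of the relevant lattice has a coordinate of size $\gg\sqrt f$. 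This is exactly the mechanism behind Lemma \ref{sabf} (where $\delta=1$ and the main term is even $O(1)$), except that now I only claim the cruder bound $O(\sqrt f)$ and may therefore bound the two residual sums trivially instead of evaluating them.

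The heart of the argument is the construction of the short representative $(\tilde a,\tilde b)$. I would work in the lattice $\Lambda=\{(x,y)\in{\mathbb Z}^2:\ bx\equiv ay\pmod f\}$, for which $(a,b)$ and $(-b,a+b)$ form a basis of determinant $a^2+ab+b^2=f$, both vectors being of length $O(\sqrt f)$. Since $\gcd(f,\delta)=1$, reduction modulo $\delta$ maps $\Lambda$ onto $({\mathbb Z}/\delta{\mathbb Z})^2$, so every residue class modulo $\delta$ is realized by a combination $m(a,b)+n(-b,a+b)$ with $0\le m,n<\delta$, which is still $O(\sqrt f)$. Writing $\tilde a=ma-nb$, reduction modulo a prime $\ell\mid f$ uses $a\equiv \omega_\ell b$ with $\omega_\ell$ a primitive cube root of unity, so $\ell\mid\tilde a$ forces $\ell\mid m^2+mn+n^2$; the same analysis applies to $\tilde b$. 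Consequently any combination whose norm $m^2+mn+n^2$ is coprime to $f$ has $\tilde a,\tilde b$ coprime to $f$, and then $\gcd(\tilde a,\tilde b)\mid f$ together with $\gcd(\tilde a,f)=1$ forces $\gcd(\tilde a,\tilde b)=1$, supplying the remaining coprimality hypothesis of (\ref{sbcddcb}). In particular the three associate pairs $(a,b),(-b,a+b),(-(a+b),a)$ (norm $1$) and the norm $3$ combinations are automatically coprime to $f$, the latter being harmless precisely because $3\nmid f$.

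It then remains to check, for each $\delta\in\{2,3,6\}$ and each target value of $h$ modulo $\delta$, that some such low-norm combination is simultaneously coprime to $\delta$ and carries the prescribed ratio $\tilde a/\tilde b\equiv h\pmod\delta$. For $\delta=2$ this is immediate, since $a,b$ are odd and $1$ is the only unit modulo $2$. For $\delta=3$ the associate pairs and the norm $3$ combinations already realize both admissible ratios modulo $3$, again using $3\nmid f$. The genuine obstacle is $\delta=6$, where I must meet three constraints at once, namely $\tilde a,\tilde b$ odd, coprime to $3$, and of prescribed ratio modulo $6$; here I expect to run a finite case analysis on $a,b\pmod 6$ (normalized as in (\ref{deff})), selecting $(m,n)$ with $m$ odd and $n$ even to control the parity and with norm in $\{1,3\}$ to guarantee coprimality to $f$. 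Once the individual bound $s(h,\delta f)=O(\sqrt f)$ is secured uniformly over the fibre $h\equiv a/b\pmod f$, the final claim follows from Proposition \ref{Nd0abf}: $N_{d_0}(f,H)$ equals $N_{d_0}(f,\{1\})$, which depends only on $f$ modulo $d_0$ and is thus $O(1)$, plus a bounded combination of the sums $S(a,b,\delta f)$, each of which is a sum of $\phi(\delta)=O(1)$ Dedekind sums of size $O(\sqrt f)$.
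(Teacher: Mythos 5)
Your construction works, and is in fact cleaner than the paper's argument, for $\delta=2$, for $\delta=3$, and even for $\delta=6$ in the case $(a,b)\equiv(5,1)\pmod 6$: there a single application of (\ref{sbcddcb}) to a short representative $(\tilde a,\tilde b,\delta f)$, with the two residual sums bounded by (\ref{boundscd}) and the main term controlled via $|\tilde a\tilde b|\geq\max(|\tilde a|,|\tilde b|)\gg\sqrt f$, does the job (e.g.\ $(m,n)=(1,0)$ realizes the ratio $5$ and $(m,n)=(1,-2)$, of norm $3$, realizes the ratio $1$). But the case you flag as ``the genuine obstacle'', $\delta=6$ with $(a,b)\equiv(3,1)\pmod 6$, contains a real gap, and it cannot be closed by the finite case analysis you propose. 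In that case the only multipliers with $m$ odd, $n$ even and norm in $\{1,3\}$ are $\pm(1,0)$ and $\pm(1,-2)$; the first is invalid (it gives $\tilde a\equiv\pm a\equiv 3\pmod 6$, not a unit), and the second realizes only the fibre element $h\equiv 1\pmod 6$. For the other element, $h\equiv 5\pmod 6$, solving the congruences forces $(m,n)\equiv(3,4)$ or $(3,2)\pmod 6$, whose smallest norms are $7,13,19,37,\dots$, and these norms can share prime factors with $f$. Concretely, for $a=39$, $b=1$, $f=1561=7\cdot 223$ one has $a/b\equiv 4\pmod 7$, and the norm-$7$ multipliers $(3,-2)$, $(-3,2)$ give $(\tilde a,\tilde b)=\pm(119,-77)$, both coordinates divisible by $7$; one is pushed up to norm $13$. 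Worse, since for $\ell\mid f$ one has $\ell\mid\tilde a\tilde b$ exactly when $n\equiv m\,(a/b)\pmod\ell$, a Chinese-remainder construction (choosing $a/b$ to be the ``bad'' cube root of unity modulo each of $7,13,19,37,\dots$) produces admissible $f$ for which \emph{every} multiplier up to any prescribed norm fails. So no finite list of $(m,n)$ works uniformly in $f$: for this ratio a representative with both coordinates $O(\sqrt f)$ and coprime to $f$ need not exist, and within your framework the bound degrades to something like $O(\sqrt f\,\omega(f))$ rather than $O(\sqrt f)$.

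This is precisely where the paper's proof takes a different route, and it is the idea your proposal is missing: for $a\equiv 3\pmod 6$ it represents the two fibre elements as $(a\pm\delta f)/b$, $\delta\in\{1,2\}$, accepting a representative with one coordinate of size $\approx f$ but automatically coprime to $f$, and compensates with a \emph{two-step} reciprocity. After dividing by $\delta'=\gcd(a\pm\delta f,b)$, a first application of (\ref{sbcddcb}) together with the congruence $3\delta f\equiv\mp 3a\pmod{a\pm\delta f}$ trades the large modulus $3\delta f$ for $3a=O(\sqrt f)$, and a second application of (\ref{sbcddcb}) to the pairwise coprime triple $\left(b/\delta',\,3a,\,(a\pm\delta f)/\delta'\right)$ finishes, using (\ref{boundscd}) and the bounds (\ref{boundsab}), in particular $|ab|\geq\sqrt{f/3}$. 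You need this mechanism (or an equivalent one) for the missing ratio; the remainder of your argument, including the lattice analysis guaranteeing $|\tilde a\tilde b|\gg\sqrt f$ and the final deduction of $N_{d_0}(f,H)=O(\sqrt f)$ from Proposition \ref{Nd0abf}, is correct.
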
}}

\begin{proof}
First, by (\ref{SfabH}) we have
$$S(a,b,f) =s(a,b,f) =\frac{f-1}{12f}.$$
Second, $f$ being odd, recalling \eqref{Ad0f}
we have $A(2,f) =A(2,1) =0$, $N_2(f,\{1\}) =-1$, 
\begin{equation}\label{Sab2f}
S(a,b,2f) =s(a,b,2f)
\end{equation} 
and 
$$N_2(f,H) 
=-1
-8S(a,b,f)
+16S(a,b,2f).$$
Third, assume that $d_0\in\{3,6\}$. 
Then $\gcd(f,3)=1$. 
Hence, $f\equiv 1\pmod 6$. 
Therefore, 
$A(3,f) =A(3,1) =4/3$, $A(6,f) =A(6,1) =-4$,
$N_3(f,\{1\}) =N_6(f,\{1\}) =-1$,
$$N_3(f,H)
=-1-6S(a,b,f)
+9S(a,b,3f)$$
and 
$$N_6(f,H)
=-1+2S(a,b,f)-4S(a,b,2f)-3S(a,b,3f)+6S(a,b,6f).$$

\noindent If $a\equiv -1\pmod 6$, $b\equiv 1\pmod 6$ and $\delta\in\{1,2\}$, 
then 
$\{h\in ({\mathbb Z}/3\delta f{\mathbb Z})^*;\ h\equiv a/b\pmod f\} 
=\{a/b,(a+2f)/b\}$ 
and
\begin{equation}\label{Sab3deltaf1}
S(a,b,3\delta f)
=s(a,b,3\delta f)
+s(a+2f,b,3\delta f).
\end{equation}

\noindent If $a\equiv 3\pmod 6$, $b\equiv 1\pmod 6$ and $\delta\in\{1,2\}$, 
then 
$\{h\in ({\mathbb Z}/3\delta f{\mathbb Z})^*;\ h\equiv a/b\pmod f\} 
=\{(a-\delta f)/b,(a+\delta f)/b\}$ 
and
\begin{equation}\label{Sab3deltaf2}
S(a,b,3\delta f)
=s(a-\delta f,b,3\delta f)
+s(a+\delta f,b,3\delta f).
\end{equation}

\noindent Let us now bound the Dedekind-Rademacher sums in (\ref{Sab2f}), (\ref{Sab3deltaf1}) and (\ref{Sab3deltaf2}). 
We will need the bounds: 
\begin{equation}\label{boundsab}
\hbox{if } 
f=a^2+ab+b^2,
\hbox{ then }
\vert a\vert+\vert b\vert\leq\sqrt{4f}
\hbox{ and }
\vert ab\vert\geq\sqrt{f/3}.
\end{equation} 
Indeed, $4f-(\vert a\vert+\vert b\vert)^2 \geq 3(\vert a\vert-\vert b\vert)^2\geq 0$ 
and $f\leq a^2+a^2b^2+b^2 =3a^2b^2$. 

\noindent First, we deal with the Dedekind-Rademacher sums $s(a,b,\delta f)$ in (\ref{Sab2f}) and (\ref{Sab3deltaf1}), 
where $\delta\in\{2,3,6\}$. 
Here, $\gcd (a,b) =\gcd(a,\delta f)=\gcd (b,\delta f) =1$. 
Then (\ref{boundscd}) and (\ref{boundsab}) enable us to write (\ref{sbcddcb}) as follows: 
$$s(a,b,\delta f)+O(\sqrt f)+O(\sqrt f) =O(\sqrt f).$$ 
Hence, in (\ref{Sab2f}) and (\ref{Sab3deltaf1}) we have
$s(a,b,2f),\ s(a,b,3f),\ s(a,b,6f)=O(\sqrt f)$. 

\noindent Second, the remaining and more complicated Dedekind-Rademacher sums 
in (\ref{Sab3deltaf1}) and (\ref{Sab3deltaf2}) are of the form 
 $s(a+\varepsilon\delta f,b,3\delta f)$, 
where $\varepsilon\in\{\pm 1\}$, $\delta\in\{1,2\}$ and 
$\gcd (a+\varepsilon\delta f,3\delta f) 
=\gcd(b,3\delta f)=1$.
Set 
$\delta' =\gcd(a+\varepsilon\delta f,b)$. 
Then $\gcd(\delta',3\delta f)=1$. 
Thus, 
$s(a+\varepsilon\delta f,b,3\delta f)$ 
$=s((a+\varepsilon\delta f)/\delta',b/\delta',3\delta f)$, 
where now the three terms in this latter Dedekind-Rademacher are pairwise coprime. 
Then (\ref{boundscd}) and (\ref{boundsab}) enable us to write (\ref{sbcddcb}) as follows: 
\begin{multline*}
s((a+\varepsilon\delta f)/\delta',b/\delta',3\delta f) +O(\sqrt f) +s(b/\delta',3\delta f,(a+\varepsilon\delta f)/\delta')\\ 
=O(\delta'^2/b) =O(b) =O(\sqrt f).
\end{multline*}
Now, $3\delta f\equiv -3\varepsilon a\pmod {a+\varepsilon\delta f}$ gives 
$s(b/\delta',3\delta f,(a+\varepsilon\delta f)/\delta') =-\varepsilon s(b/\delta',3a,(a+\varepsilon\delta f)/\delta')$. 
Since the three rational integers in this latter Dedekind-Rademacher are pairwise coprime, 
the bounds (\ref{boundsab}) and (\ref{boundscd}) enable us to write (\ref{sbcddcb}) as follows: 
$$s(b/\delta',3a,(a+\varepsilon\delta f)/\delta')+O(\sqrt f)+O(\sqrt f) =O(\sqrt f).$$ 
It follows that 
$s(a+\varepsilon\delta f,b,3\delta f) 
=s((a+\varepsilon\delta f)/\delta',b/\delta',3\delta f)
=O(\sqrt f)$, 
i.e., in (\ref{Sab3deltaf1}) and (\ref{Sab3deltaf2}) we have
$s(a+2f,b,6f),\ s(a-2f,b,6f),\ s(a+2f,b,3f), s(a-f,b,3f), s(a+f,b,3f) =O(\sqrt f)$.
\end{proof}

\noindent\frame{\vbox{
\begin{Conjecture}\label{conjecturesab}
Let $\delta$ be a given square-free integer.
Let $f>3$ run over the odd integers of the form $f=a^2+ab+b^2$ 
with $\gcd(a,b)=1$ and $\gcd (\delta,f)=1$. 
Then $s(h,\delta f) =O(\sqrt f)$ for any $h\in ({\mathbb Z}/\delta f{\mathbb Z})^*$ such that $h\equiv a/b\pmod f$. 
Consequently, for a given square-free integer $d_0$, 
in Proposition \ref{Nd0abf}, we would have 
$N_{d_0}(f,H) =O(\sqrt f)$ for $\gcd (d_0,f)=1$.
\end{Conjecture}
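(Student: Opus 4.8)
The plan is to extend the reciprocity-chain argument of Proposition \ref{boundsSab} from $\delta\in\{1,2,3,6\}$ to an arbitrary square-free $\delta$ coprime to $f$. The conjecture has two parts: the individual bound $s(h,\delta f)=O(\sqrt f)$ for each $h\equiv a/b\pmod f$, and its consequence for $N_{d_0}(f,H)$. The latter is immediate once the former is known: for fixed $\delta$ there are exactly $\phi(\delta f)/\phi(f)=\phi(\delta)=O(1)$ residues $h\in({\mathbb Z}/\delta f{\mathbb Z})^*$ reducing to $a/b$ modulo $f$, so $S(a,b,\delta f)$ is a bounded sum of such Dedekind sums and is $O(\sqrt f)$; applying this to every $\delta\mid d_0$ and invoking Proposition \ref{Nd0abf} gives $N_{d_0}(f,H)=N_{d_0}(f,\{1\})+O(\sqrt f)=O(\sqrt f)$, since $N_{d_0}(f,\{1\})$ depends only on $f$ modulo $d_0$ by Proposition \ref{H=1} and is therefore $O(1)$.

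To bound a single $s(h,\delta f)$ I would represent $h=\text{num}/b$ in $({\mathbb Z}/\delta f{\mathbb Z})^*$ with $\text{num}\equiv a\pmod f$ and both $\text{num}$ and $b$ coprime to $\delta f$ (replacing $b$ by a suitable $b+if$ when $\gcd(b,\delta)>1$), so that $s(h,\delta f)=s(\text{num},b,\delta f)$ is a genuine Dedekind--Rademacher sum with pairwise coprime arguments after dividing out $\gcd(\text{num},b)$. Writing $\text{num}=a+jf$ with $0\le j<\delta$ and recalling from (\ref{boundsab}) that $|a|,|b|=O(\sqrt f)$, a first application of the reciprocity law (\ref{sbcddcb}) reduces matters to the two companion sums $s(\delta f,\text{num},b)$ and $s(b,\delta f,\text{num})$ plus an error that is $O(1)$. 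The first has modulus $b=O(\sqrt f)$ and is harmless by (\ref{boundscd}); the difficulty is concentrated in $s(b,\delta f,\text{num})$, whose modulus $\text{num}$ is of size $O(f)$.

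The mechanism that succeeds for $\delta\in\{2,3,6\}$ is the single-step reduction $s(b,\delta f,\text{num})=s(b,(\delta f\bmod\text{num}),\text{num})$ via (\ref{bcd=abacd}), followed by one more reciprocity: there the modulus $\delta f$ is a small integer multiple of the shift $\text{num}-a=jf$, so $\delta f\equiv-(\delta/j)a\pmod{\text{num}}$ with $(\delta/j)a=O(\sqrt f)$, and the remaining modulus is driven down to $O(\sqrt f)$. The main obstacle in the general case is precisely that for arbitrary square-free $\delta$ the representative $\text{num}=a+jf$ carries a shift $j$ that need not divide $\delta$, so $\delta f\bmod\text{num}$ is generically of size $O(f)$ rather than $O(\sqrt f)$, and the clean one-step descent collapses.

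To overcome this I would replace the single reduction by a full Euclidean/continued-fraction descent on the pair $(\delta f,\text{num})$, iterating (\ref{bcd=abacd}) and (\ref{sbcddcb}); the accumulated error is then governed by the sum of the partial quotients of $\text{num}/(\delta f)$, so the heart of the matter becomes a Diophantine statement: for $\text{num}\equiv a\pmod f$ this continued-fraction expansion has partial quotients of total size $O(\sqrt f)$. This is exactly the arithmetic input furnished automatically by the identity $f=a^2+ab+b^2$ when $\delta=1$, where the descent telescopes to the exact value $(f-1)/(12f)$ of Lemma \ref{sabf}, but I do not see how to guarantee it uniformly for general $\delta$ and composite $f$. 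Moreover, the example of Remark \ref{dedekindtwosizes}, where a different order-$3$ element yields a Dedekind sum of size $f^{2/3}$, shows that no such control can hold for an arbitrary order-$3$ argument, so any proof must genuinely exploit that $h$ arises from the norm form. Failing a complete resolution, the discrepancy method of Section \ref{boundsDedekind} (Theorem \ref{indivboundgen} and Remark \ref{compositebound}) already yields the weaker bound $s(h,\delta f)=O((\log f)^2\sqrt f)$ when $f$ is prime, so the real content of the conjecture is the removal of these logarithmic factors together with the passage to composite $f$.
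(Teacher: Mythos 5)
You should note at the outset that the statement you were asked to prove is labelled a \emph{Conjecture} in the paper: the authors do not prove it, and they state it precisely because their method only covers the cases $\delta\in\{2,3,6\}$ (Proposition \ref{boundsSab}), supplemented by the sharpness evidence of Section \ref{d3b1}. So there is no proof in the paper to compare against, and your proposal, as you yourself concede, is not a proof either: it is a correct reduction plus a correct diagnosis of why the known technique stops where it does.

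Within that framework, what you wrote is sound and faithful to the paper. Your reduction of the second assertion to the first is exactly the paper's: since $\gcd(\delta,f)=1$ there are $\phi(\delta)=O(1)$ classes $h\in({\mathbb Z}/\delta f{\mathbb Z})^*$ lifting $a/b$, so the individual bound gives $S(a,b,\delta f)=O(\sqrt f)$, and then Proposition \ref{Nd0abf} together with Proposition \ref{H=1} (which makes $N_{d_0}(f,\{1\})=O(1)$) yields $N_{d_0}(f,H)=O(\sqrt f)$. Your diagnosis of the obstruction is also the right one: in the proof of Proposition \ref{boundsSab} the lifts of $a/b$ can be written as $(a+\varepsilon\delta f)/b$ with modulus $3\delta f$, so that $3\delta f\equiv-3\varepsilon a\pmod{a+\varepsilon\delta f}$ is of size $O(\sqrt f)$ by \eqref{boundsab}, and a single application of \eqref{sbcddcb} finishes; for general square-free $\delta$ the lifts $(a+jf)/b$ have $j\nmid\delta$, the remainder $\delta f\bmod(a+jf)$ is generically of size $f$, and this one-step descent collapses. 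The genuine gap in your proposal is the replacement you offer: the full Euclidean/continued-fraction descent only gives $O(\sqrt f)$ if the total size of the partial quotients of $(a+jf)/(\delta f)$ is $O(\sqrt f)$, and you provide no argument for that Diophantine claim --- indeed establishing it (necessarily using that $f=a^2+ab+b^2$, in view of Remark \ref{dedekindtwosizes}) \emph{is} the open content of the conjecture. Your fallback, that Remark \ref{compositebound} gives $s(h,\delta p)=O((\log p)^2\sqrt p)$ when $f=p$ is prime (because $h$ has order $3$ modulo the largest prime factor $p$ of $\delta p$), is correct, but as you say it loses logarithmic factors and says nothing for composite $f$. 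In short: no error in what you assert, but the central bound remains unproved, which is consistent with the statement's status in the paper.
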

}}

Putting everything together we obtain:

\noindent\frame{\vbox{
\begin{theorem}\label{thp3M2}
Let $p\equiv 1\pmod 6$ be a prime integer. 
Let $K$ be the imaginary subfield of degree $(p-1)/3$ of the cyclotomic field ${\mathbb Q}(\zeta_p)$.
Let $H$ be the subgroup of order $3$ of the multiplicative group $({\mathbb Z}/p{\mathbb Z})^*$. 
We have
\begin{equation*}
M(p,H)
={\pi^2\over 6}\left (1+\frac{N(p,H)}{p}\right )
={\pi^2\over 6}\left (1-\frac{1}{p}\right ) 
\hbox{ and } 
h_K^-
\leq 2\left (\frac{p}{24}\right )^{(p-1)/12},
\end{equation*}
and the following effective asymptotics and upper bounds
\begin{equation}\label{pc3M2}
M_2(p,H)
={\pi^2\over 8}\left (1+\frac{N_2(p,H)}{p}\right )
={\pi^2\over 8}\left (1+O(p^{-1/2})\right )
\hbox{ and } 
h_K^-
\leq 2\left (\frac{p+o(p)}{32}\right )^{\frac{(p-1)}{12}},
\end{equation} 
\begin{equation*}
M_6(p,H)
={\pi^2\over 9}\left (1+\frac{N_6(p,H)}{p}\right )
={\pi^2\over 9}\left (1+O(p^{-1/2})\right )
\hbox{ and } 
h_K^-
\leq 2\left (\frac{p+o(p)}{36}\right )^{\frac{(p-1)}{12}}.
\end{equation*} 
\end{theorem}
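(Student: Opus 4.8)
The plan is to assemble the pieces already in place: the explicit formula (\ref{Md0pH}) for $M_{d_0}(p,H)$, the evaluations of $N_{d_0}(p,H)$ coming from Lemma \ref{sabf} and Proposition \ref{boundsSab}, and the relative class number inequalities (\ref{boundhKminus}) and (\ref{boundhKminusd0}). Throughout I would record once and for all that $K$ has degree $m=(p-1)/3$, hence $m/4=(p-1)/12$, and that $w_K=2$ since $K\neq\mathbb{Q}(\zeta_p)$; I would also fix a representation $p=a^2+ab+b^2$ with $\gcd(a,b)=1$ so that $H=\{1,a/b,b/a\}$ is the order-$3$ subgroup, which is precisely the setting of Lemma \ref{sabf} and Proposition \ref{boundsSab}.

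First I would treat $d_0=1$, where the outcome is exact. Lemma \ref{sabf} gives $N(p,H)=-1$, so (\ref{M(p,H)}) yields $M(p,H)=\frac{\pi^2}{6}(1-\frac1p)$. Feeding this into (\ref{boundhKminus}) and simplifying $\frac{pM(p,H)}{4\pi^2}=\frac{p-1}{24}\leq\frac{p}{24}$ produces $h_K^-\leq 2(\frac{p}{24})^{(p-1)/12}$. Next, for $d_0\in\{2,6\}$ I would compute the leading constants $\kappa_{d_0}=\frac{\pi^2}{6}\prod_{q\mid d_0}(1-\frac1{q^2})$, namely $\kappa_2=\frac{\pi^2}{8}$ and $\kappa_6=\frac{\pi^2}{9}$, and invoke Proposition \ref{boundsSab} to get $N_{d_0}(p,H)=O(\sqrt p)$. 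Then (\ref{Md0pH}) immediately gives $M_2(p,H)=\frac{\pi^2}{8}(1+O(p^{-1/2}))$ and $M_6(p,H)=\frac{\pi^2}{9}(1+O(p^{-1/2}))$, which are the asserted mean square asymptotics.

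To pass from these to the class number bounds I would apply (\ref{boundhKminusd0}) together with $D_{d_0}(p,H)=1+o(1)$ from Corollary \ref{PiKminus} (legitimate since $d=3=o(\log p)$). Because $\frac{\kappa_2}{4\pi^2}=\frac1{32}$ and $\frac{\kappa_6}{4\pi^2}=\frac1{36}$, the quantity $\frac{pM_{d_0}(p,H)}{4\pi^2 D_{d_0}(p,H)}$ collapses to $\frac{p+o(p)}{32}$ and $\frac{p+o(p)}{36}$ respectively, yielding the two remaining bounds on $h_K^-$.

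The substance of the argument is entirely upstream: the exact identity $N(p,H)=-1$ and, more importantly, the $O(\sqrt p)$ bound of Proposition \ref{boundsSab}, which rests on the Dedekind--Rademacher reciprocity analysis carried out there. Given those, the present theorem is essentially bookkeeping, and the only point requiring a moment's care is verifying that the multiplicative error $1+O(p^{-1/2})$ from $M_{d_0}$ and the error $1+o(1)$ from $D_{d_0}$ merge cleanly into a single $p+o(p)$ in the numerator — which they do precisely because the constants $\kappa_{d_0}/(4\pi^2)$ evaluate to the clean reciprocals $1/32$ and $1/36$.
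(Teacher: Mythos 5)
Your proposal is correct and follows essentially the same route as the paper, whose proof simply cites \eqref{M(p,H)}, \eqref{Md0pH}, \eqref{SfabH} (i.e.\ Lemma \ref{sabf}), Proposition \ref{boundsSab} for the mean square formulas, and \eqref{boundhKminusd0} together with Corollary \ref{PiKminus} for the class number bounds. Your write-up merely makes explicit the bookkeeping the paper leaves implicit ($\kappa_2=\pi^2/8$, $\kappa_6=\pi^2/9$, $w_K=2$, $m/4=(p-1)/12$, and the merging of the $1+O(p^{-1/2})$ and $1+o(1)$ factors), all of which checks out.
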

}}

\begin{proof}
The formulas for $M(p,H)$, $M_2(p,H)$ and $M_6(p,H)$ 
follow from (\ref{M(p,H)}), (\ref{Md0pH}), (\ref{SfabH}) and Proposition \ref{boundsSab}. 
The inequalities on $h_K^-$ are consequences as usual of (\ref{boundhKminusd0}) and Corollary \ref{PiKminus}.
\end{proof}

\subsection{The special case $p=a^2+a+1$ and $d_0=1,2,6$}\label{d3b1}
Let $f>3$ be of the form $f=a^2+a+1$, $a\in {\mathbb Z}$. 
Then $\gcd(f,6)=1$ if and only if $a\equiv 0,2,3,5\pmod 6$. 
We define $c_a'$, $c_a''$, $c_a'''$ and $c_a =(-1-2c_a'-c_a''+2c_a''')/12$, as follows:
$$\begin{array}{|c|r|r|r|r|r|}
\hline
a\pmod 6&c_a'&c_a''&c_a'''&c_a\\
\hline
0&-3a-2&-8a-5&-19a-10&-2a-1\\
1&3a+1&&&\\
2&-3a-2&8a+3&a-18&-3\\
3&3a+1&-8a-5&-a-19&-3\\
4&-3a-2&&&\\
5&3a+1&8a+3&19a+9&2a+1\\
\hline
\end{array}$$

\begin{theorem}\label{thp3M2M6}
Let $p\equiv 1\pmod 6$ be a prime integer of the form $p=a^2+a+1$ with $a\in {\mathbb Z}$. 
Let $K$ be the imaginary subfield of degree $(p-1)/3$ of the cyclotomic field ${\mathbb Q}(\zeta_p)$.
Let $H$ be the subgroup of order $3$ of the multiplicative group $({\mathbb Z}/p{\mathbb Z})^*$. 
We have 
\begin{equation}\label{pc3M2a}
M_2(p,H)
={\pi^2\over 8}\left (1-(-1)^{a}\frac{2a+1}{p}\right ),
\end{equation}
and 
\begin{equation}\label{pc3M6a}
M_6(p,H)
={\pi^2\over 9}\left (1+\frac{c_a}{p}\right ), 
\end{equation}
showing that the error term in (\ref{pc3M2}) is optimal.
\end{theorem}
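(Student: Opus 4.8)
The plan is to specialize the machinery of Theorem \ref{Thgeneralexplicit} — in the concrete form of Proposition \ref{Nd0abf} and the explicit expressions $N_2(f,H) = -1 - 8S(a,b,f) + 16S(a,b,2f)$ and $N_6(f,H) = -1 + 2S(a,b,f) - 4S(a,b,2f) - 3S(a,b,3f) + 6S(a,b,6f)$ recorded in the proof of Proposition \ref{boundsSab} — to the case $b=1$, $f = p = a^2+a+1$, and then to evaluate each Dedekind--Rademacher sum in closed form. By (\ref{Md0pH}) with $\kappa_2 = \pi^2/8$ and $\kappa_6 = \pi^2/9$, it suffices to show $N_2(p,H) = -(-1)^a(2a+1)$ and $N_6(p,H) = c_a$. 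Since $b=1$, each $S(a,1,\delta p)$ is a sum of ordinary Dedekind sums $s(h,\delta p)$ over the $\phi(\delta)$ lifts $h \equiv a \pmod p$ coprime to $\delta$, and $S(a,1,p) = \frac{p-1}{12p}$ is already given by (\ref{SfabH}).

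The engine is the reciprocity law (\ref{scddc}) together with the base values (\ref{s1d}). The crucial simplification afforded by $b=1$ is that $\delta p = \delta(a^2+a+1) \equiv \delta \pmod a$, so a single reciprocity step replaces $s(a,\delta p)$ by $s(\delta,a)$, which becomes explicit after one or two further steps. The remaining lifts have equally rigid shapes — for instance the odd lift modulo $2p$ when $a$ is even is $a+p = (a+1)^2$, and the lifts modulo $3p$ are $a,\ a+p,\ a+2p$ with exactly one excluded for divisibility by $3$ — so every sum telescopes to a rational function of $a$ and $p$ in a bounded number of steps. I would record the resulting $a$-dependent "error parts'' of $S(a,1,2p)$, $S(a,1,3p)$, $S(a,1,6p)$ as the auxiliary integers $c_a'$, $c_a''$, $c_a'''$ of the table. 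The point is that the pieces proportional to $1/p$ in these sums cancel cleanly against $-8S(a,1,p)$ (resp.\ $2S(a,1,p)$): indeed the $1/p$-part of $S(a,1,2p)$ is $\frac{p-1}{24p}$, and $16\cdot\frac{p-1}{24p} = -\bigl(-8\cdot\frac{p-1}{12p}\bigr)$, and similarly $-4\cdot\frac{p-1}{24p} = -\,2\cdot\frac{p-1}{12p}$, which is precisely why $N_2$ and $N_6$ collapse to the stated low-degree expressions.

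The computation splits according to $a \bmod 6$. The parity of $a$ decides which of $\{a, a+p\}$ is the odd lift modulo $2p$, and this is the sole source of the sign $(-1)^a$ in (\ref{pc3M2a}); the residue $a \bmod 3$ — forced into $a \equiv 0,2 \pmod 3$ by $\gcd(p,3)=1$, which explains why the last three table columns are only populated for those classes — decides which lifts survive modulo $3p$ and $6p$. Assembling the formulas for $N_2$ and $N_6$ above and substituting the tabulated values yields (\ref{pc3M2a}) and (\ref{pc3M6a}). Optimality in (\ref{pc3M2}) is then immediate: since $N_2(p,H) = -(-1)^a(2a+1)$ and $a \sim \sqrt p$, the secondary term of $M_2(p,H)$ has exact order $p^{-1/2}$, so the error $O(p^{-1/2})$ of Theorem \ref{thp3M2} cannot be improved.

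The hard part will be the bookkeeping rather than any new idea: correctly pinning down each lift modulo $6p$, tracking the sign $\mathrm{sign}(cd)$ and the integer-part conventions through every reciprocity step across all the residue classes, and checking that the genuinely $a$-dependent remainders reorganize into the compact table entries $c_a', c_a'', c_a'''$ and hence into $c_a = (-1 - 2c_a' - c_a'' + 2c_a''')/12$. Once the reductions $\delta p \equiv \delta \pmod a$ and $a+p = (a+1)^2$ are in hand none of the individual evaluations is conceptually difficult, but the case analysis is long and the sign conventions are the natural place for errors to creep in.
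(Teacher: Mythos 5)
Your proposal is correct and follows essentially the same route as the paper: the paper likewise reduces both formulas to the explicit expression \eqref{Md0pH} for $M_{d_0}(p,H)$, evaluates the Dedekind sums over the lifts of $H$ in closed form via the reciprocity law \eqref{scddc} together with \eqref{s1d}, and splits into cases modulo $6$ (Lemmas \ref{sab2f} and \ref{sab6f}), obtaining \eqref{pc3M2a} as the $d=3$ special case of \eqref{M2pd}. The only differences are bookkeeping ones: the paper works with the full subgroup sums $S(H_\delta,\delta f)$ and routes $M_2(p,H)$ through the general Lemma \ref{(a^d-1)/(a-1)}, whereas you work with the single-coset sums $S(a,1,\delta p)$ and redo that reciprocity computation directly for $d=3$.
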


\begin{proof}
The formula \eqref{pc3M2a} is a special case of \eqref{M2pd} for $d=3$.
By (\ref{Md0pH}), we have 
$$
M_6(p,H)
={\pi^2\over 9}\left (1+\frac{N_6(p,H)}{p}\right ).$$
Hence \eqref{pc3M6a} follows from Lemma \ref{sab6f} below.
\end{proof}

\begin{lemma}\label{sab2f}
Let $f>3$ be of the form $f=a^2+a+1$, $a\in {\mathbb Z}$. 
Set $H=\{1,a,a^2\}$, 
a subgroup of order $3$ of the multiplicative group $({\mathbb Z}/f{\mathbb Z})^*$. 
We have 
\begin{equation}\label{S2fa1H}
S(H,f)
=\frac{f-1}{12},
\ S(H_2,2f) 
=\frac{2f+c_a'}{12}
\end{equation} 
and 
\begin{equation}\label{N2fa1H}
N_2(f,H) 
=(-1)^{a-1}(2a+1).
\end{equation}
\end{lemma}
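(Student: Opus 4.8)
The plan is to realize that $f = a^2 + a + 1 = (a^3-1)/(a-1)$, so that the present situation is exactly the case $d = 3$ of Lemma \ref{(a^d-1)/(a-1)} with the same $a$ (and $H = \{1, a, a^2\} = \{a^k;\ 0 \le k \le 2\}$). First I would read off the two formulas of that lemma at $d = 3$. For $S(H,f)$ it yields $\frac{a+1}{a-1}\cdot\frac{f - 2a - 1}{12}$, and since $f - 2a - 1 = a^2 - a = a(a-1)$ the factor $a - 1$ cancels and this collapses to $\frac{a(a+1)}{12} = \frac{f-1}{12}$, the first assertion of \eqref{S2fa1H} (this also follows from Lemma \ref{sabf} taken with $b = 1$).

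Next I would specialise the two-case formula for $S(H_2, 2f)$ at $d = 3$. When $a$ is odd it gives $\frac{a+1}{a-1}\cdot\frac{4f - 2a - 10}{24}$; factoring $4f - 2a - 10 = 2(2a+3)(a-1)$ cancels the $a-1$ and leaves $\frac{(a+1)(2a+3)}{12} = \frac{2f + 3a + 1}{12}$. When $a$ is even it gives $\frac{2a-1}{a-1}\cdot\frac{f - 2a - 1}{12} = \frac{a(2a-1)}{12} = \frac{2f - 3a - 2}{12}$. Comparing with the defining table, $c_a' = 3a+1$ for odd $a$ and $c_a' = -3a-2$ for even $a$, so in both cases $S(H_2, 2f) = \frac{2f + c_a'}{12}$, which is the second assertion of \eqref{S2fa1H}.

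Finally, to obtain \eqref{N2fa1H} I would invoke \eqref{defNd0fHbis} at $d_0 = 2$, where $\mu(2) = -1$ and $\prod_{q\mid 2}(q+1) = 3$, so the prefactor equals $\frac{12\mu(2)}{3} = -4$. The divisors $\delta = 1$ and $\delta = 2$ contribute $S(H,f)$ and $\frac{2\mu(2)}{\phi(2)}S(H_2,2f) = -2S(H_2,2f)$ to the inner sum, so the formula reads $N_2(f,H) = -f - 4S(H,f) + 8S(H_2,2f)$. Substituting the two values just computed and clearing the denominator $12$ gives $N_2(f,H) = -f + \frac{3f+6a+3}{3} = 2a+1$ when $a$ is odd and $N_2(f,H) = -f + \frac{3f-6a-3}{3} = -(2a+1)$ when $a$ is even; that is, $N_2(f,H) = (-1)^{a-1}(2a+1)$.

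The whole argument is bookkeeping once the reduction to Lemma \ref{(a^d-1)/(a-1)} is made, so I do not expect a genuine obstacle. The only points needing care are the two factorizations $f - 2a - 1 = a(a-1)$ and $4f - 2a - 10 = 2(2a+3)(a-1)$ that make the apparent pole at $a = 1$ disappear, and keeping the parity of $a$ synchronised across the two-case formula of Lemma \ref{(a^d-1)/(a-1)}, the table value of $c_a'$, and the sign $(-1)^{a-1}$ in the final answer.
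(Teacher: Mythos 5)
Your proposal is correct and follows exactly the paper's route: the paper likewise proves \eqref{S2fa1H} by specializing Lemma \ref{(a^d-1)/(a-1)} to $d=3$, $f=a^2+a+1$, and then derives \eqref{N2fa1H} from \eqref{defNd0fHbis} via the same identity $N_2(f,H)=-f-4S(H,f)+8S(H_2,2f)$. The only difference is that you spell out the cancellations and the parity bookkeeping that the paper leaves implicit.
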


\begin{proof}
Apply Lemma \ref{(a^d-1)/(a-1)} with $d=3$ and $f=a^2+a+1$ to get \eqref{S2fa1H}.
Then using (\ref{defNd0fHbis}) 
we get
$N_2(f,H) 
=-f-4S(H,f)+8S(H_2,2f)
=\frac{2c_a'+1}{3}
=(-1)^{a-1}(2a+1)$. 
\end{proof} 

\begin{lemma}\label{sab6f}
Let $f>3$ be of the form $f=a^2+a+1$, $a\in {\mathbb Z}$. 
Assume that $\gcd(f,6)=1$, 
i.e. that $a\equiv 0, 2, 3, 5\pmod 6$.
Set $H=\{1,a,a^2\}$, 
a subgroup of order $3$ of the multiplicative group $({\mathbb Z}/f{\mathbb Z})^*$. 
Then 
\begin{equation}\label{S3fa1H}
S(H_3,3f) 
=\frac{5f+c_a''}{18},
\end{equation} 
\begin{equation}\label{N3fa1H}
N_3(f,H)
=\begin{cases}
-2a-1&\hbox{if $a\equiv 0\pmod 3$},\\
2a+1&\hbox{if $a\equiv 2 \pmod 3$},
\end{cases}
\end{equation}
\begin{equation}\label{S6fa1H}
S(H_6,6f) 
=\frac{10f+c_a'''}{18}
\end{equation}
and
\begin{equation}\label{N6fa1H}
N_6(f,H) 
=\begin{cases}
-2a-1&\hbox{if $a\equiv 0\pmod 6$,}\\
-3&\hbox{if $a\equiv 2,3\pmod 6$,}\\
2a+1&\hbox{if $a\equiv 5\pmod 6$.}
\end{cases}
\end{equation}
\end{lemma}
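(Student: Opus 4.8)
The plan is to reduce everything to the two sums $S(H_3,3f)$ and $S(H_6,6f)$, since the formulas for $N_3(f,H)$ and $N_6(f,H)$ are then purely formal consequences of (\ref{defNd0fHbis}) together with the values $S(H,f)=\frac{f-1}{12}$ and $S(H_2,2f)=\frac{2f+c_a'}{12}$ already supplied by Lemma \ref{sab2f} in (\ref{S2fa1H}). Specializing (\ref{defNd0fHbis}) to $d_0=3$ gives
$$N_3(f,H)=-f-3S(H,f)+\tfrac{9}{2}S(H_3,3f),$$
and to $d_0=6$ gives
$$N_6(f,H)=-f+S(H,f)-2S(H_2,2f)-\tfrac{3}{2}S(H_3,3f)+3S(H_6,6f).$$
Substituting (\ref{S3fa1H}) and the known values, the $f$-terms cancel and one reads off $N_3(f,H)=\frac{1+c_a''}{4}$, which unwinds through the table to (\ref{N3fa1H}); substituting (\ref{S6fa1H}) as well, the $f$-terms again cancel and $N_6(f,H)=\frac{-1-2c_a'-c_a''+2c_a'''}{12}=c_a$, i.e. (\ref{N6fa1H}). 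Thus the whole content lies in establishing (\ref{S3fa1H}) and (\ref{S6fa1H}).

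To compute $S(H_3,3f)=\sum_{h\in H_3}s(h,3f)$, I would first list the six elements of $H_3=s_3^{-1}(H)$ explicitly. Since $a^3\equiv 1\pmod f$, the subgroup $H=\{1,a,a^2\}$ lifts, for each $g\in\{1,a,a^2\}$, to the two integers among $g,g+f,g+2f$ that are coprime to $3$; which two occur is governed by $a\bmod 3$ (recall $f\equiv 1\pmod 3$ here), so the list is $\{1,1+f,a+f,a+2f,a^2+f,a^2+2f\}$ when $a\equiv 0$ and $\{1,1+f,a,a+2f,a^2,a^2+f\}$ when $a\equiv 2\pmod 3$. Each individual Dedekind sum $s(h,3f)$ is then evaluated by the reciprocity law (\ref{scddc}): using $3f=3a^2+3a+3\equiv 3\pmod a$ and the analogous congruences for the shifted lifts, one reduces $s(h,3f)$ along a short chain of reciprocities ending at $s(1,\cdot)$ or $s(2,\cdot)$, which is closed by (\ref{s1d}). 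Summing the six closed forms and collecting the $a$-dependent constant is designed to yield $\frac{5f+c_a''}{18}$.

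The computation of $S(H_6,6f)$ proceeds in exactly the same spirit, the only new feature being that coprimality to $2$ as well as to $3$ must be imposed, so the enumeration of the six lifts of $H$ and the resulting reciprocity chains now split according to $a\bmod 6$; this is precisely where the four-line case distinction in the definitions of $c_a',c_a'',c_a'''$ originates. The whole argument runs parallel to the treatment of $S(H,f)$ and $S(H_2,2f)$ in Lemma \ref{(a^d-1)/(a-1)} and to the Mersenne computations of Lemma \ref{Mersennesq=1,3,5,15}, except that here $d=3$ is fixed, so no geometric summation is needed and each $S(H_\delta,\delta f)$ is a sum of merely six reducible Dedekind sums.

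The main obstacle is therefore not conceptual but one of bookkeeping. One must carry out the nested reciprocity reductions correctly, tracking the relevant $\gcd$ and the $\mathrm{sign}$ and $\pm 1$ residues at each step, and verify that the six contributions assemble into the stated constants $c_a''$ and $c_a'''$ across all admissible residue classes $a\bmod 6$. Once (\ref{S3fa1H}) and (\ref{S6fa1H}) are confirmed in every class, the first paragraph delivers (\ref{N3fa1H}) and (\ref{N6fa1H}) with no further work.
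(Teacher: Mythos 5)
Your proposal follows essentially the same route as the paper: it obtains $N_3(f,H)$ and $N_6(f,H)$ from the identical specializations of (\ref{defNd0fHbis}) (arriving at the same expressions $\frac{1+c_a''}{4}$ and $\frac{-1-2c_a'-c_a''+2c_a'''}{12}=c_a$), and it computes $S(H_3,3f)$ and $S(H_6,6f)$ by enumerating the lifts of $H$ in each admissible residue class of $a$ and reducing each Dedekind sum through chains of the reciprocity law (\ref{scddc}) closed by (\ref{s1d}), which is exactly the paper's procedure (the paper details only the case $a\equiv 0\pmod 6$ for $S(H_6,6f)$). The only difference is a labor-saving device you omit: the paper also invokes the inversion symmetry (\ref{cc*}), e.g. $a^2+f=(a+f)^{-1}$ modulo $6f$, to cut the six Dedekind sums down to four distinct ones before evaluating them.
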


\begin{proof}
Let us for example detail the computation of $S(H_6,6f)$ in the case that $a\equiv 0\pmod 6$. 
We have 
$f\equiv 1\pmod 6$
and $H_6=\{1,1+4f,a+f,a+5f,a^2+f,a^2+5f\}$. 
Since $a^2+f =(a+f)^{-1}$ and $a^2+5f=(a+5f)^{-1}$ in $({\mathbb Z}/f{\mathbb Z})^*$, 
we have $S(H_6,6f) =s(1,6f)+s(1+4f,6f)+2s(a+f,6f)+2s(a+5f,6f)$, 
by (\ref{cc*}). 
Using (\ref{scddc}) and (\ref{s1d}) we obtain 
$s(1,6f) =\frac{18f^2-9f+1}{36f}$,
$s(1+4f,6f) =\frac{2f^2-13f+1}{36f}$, 
$s(a+f,6f) =-\frac{(3a-21)f+1}{72f}$ 
and $s(a+5f,6f) =-\frac{(35a+19)f+1}{72f}$. 
Formula (\ref{S6fa1H}) follows. 

By (\ref{defNd0fHbis}), we have 
$$N_3(f,H)
=-f-3S(H,f)+\frac{9}{2}S(H_3,3f)$$ 
and 
$$N_6(f,H)
=-f
+S(H,f)-2S(H_2,2f)-\frac{3}{2}S(H_3,3f)+3S(H_6,6f).$$
Using (\ref{SfabH}), (\ref{S2fa1H}) and (\ref{S3fa1H}), 
we obtain 
$N_3(f,H)
=\frac{c_a''+1}{4}$ 
and \eqref{N3fa1H}. 
Using (\ref{SfabH}), (\ref{S2fa1H}), (\ref{S3fa1H}) and (\ref{S6fa1H}), 
we obtain 
$N_6(f,H)
=\frac{-1-2c_a'-c_a''+2c_a'''}{12}
=c_a$ 
and \eqref{N6fa1H}.
\end{proof}

\section{Conclusion and a conjecture}\label{conclusion}

The proof of Lemma \ref{(a^d-1)/(a-1)} gives for $d\geq 3$ odd and $a\neq 0,\pm 1$
\begin{equation}\label{s(a,f)}
s\left (a,\frac{a^d-1}{a-1}\right )
=\frac{(f-1)(f-a^2-1)}{12af}
=O\left (f^{1-\frac{1}{d-1}}\right ).
\end{equation}
Our numerical computations suggest the following stronger version of Theorem \ref{indivbound}: 

\begin{Conjecture}\label{conjDedekind}
There exists $C>0$ such that for 
any odd $d>1$ dividing $p-1$ 
and any $h$ of order $d$ in the multiplicative group $({\mathbb Z}/p{\mathbb Z})^*$ we have
\begin{equation}\label{conjecture}
\vert s(h,p)\vert
\leq Cp^{1-\frac{1}{\phi(d)}}.
\end{equation} 
\end{Conjecture}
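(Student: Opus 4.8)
The plan is to bypass the discrepancy machinery behind Theorem \ref{indivbound}, which loses the factor $(\log p)^2$ through the Erd\H{o}s--Tur\'an inequality hidden in Lemma \ref{discrepancy}, and to replace it by an \emph{arithmetic} use of the fact that $h$ has \emph{exact} order $d$, through the cyclotomic polynomial $\Phi_d$. The first step is to pass from the trigonometric definition \eqref{defscd} to the lattice attached to $h$. Expanding the sawtooth of the alternative definition in Section \ref{boundsDedekind} into its Fourier series and using orthogonality modulo $p$, one writes $s(h,p)$, up to an explicit constant and a harmless $O(\log p)$ truncation error, as a weighted sum $p\sum 1/(mn)$ over the non-zero vectors of the covolume-$p$ lattice $\Lambda_h:=\{(m,n)\in{\mathbb Z}^2:\ m+hn\equiv 0\pmod p\}$. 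Equivalently, via \eqref{scddc} and the Barkan--Hickerson--Knuth formula, $12\,s(h,p)$ is the alternating sum of the partial quotients of $h/p$ up to $O(\log p)$; in either language the problem reduces to controlling the short vectors of $\Lambda_h$.

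The decisive new input, and the reason the exponent is $\phi(d)$ rather than $d$, is the following gap estimate. If $(m,n)\in\Lambda_h$ has $mn\neq 0$ then $p\nmid n$ and $h\equiv -m/n\pmod p$; since $h$ has exact order $d$ and $d\mid p-1$, the residue $-m/n$ is a \emph{primitive} $d$-th root of unity modulo $p$, so $\Phi_d(-m/n)\equiv 0\pmod p$. Hence $p$ divides the integer $F(m,n):=n^{\phi(d)}\Phi_d(-m/n)$, which is non-zero because $\Phi_d$ is irreducible of degree $\phi(d)\ge 2$ for $d\ge 3$ and thus has no rational root. As $|F(m,n)|\ll\max(|m|,|n|)^{\phi(d)}$, divisibility by $p$ forces $\max(|m|,|n|)\gg p^{1/\phi(d)}$. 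Since the vectors with a zero coordinate are the multiples of $(p,0)$ and $(0,p)$, of norm $\ge p$, every non-zero vector of $\Lambda_h$ has $\ell^\infty$-norm $\gg p^{1/\phi(d)}$, i.e. $\lambda_1(\Lambda_h)\gg p^{1/\phi(d)}$; by Minkowski's second theorem $\lambda_1\lambda_2\asymp p$, whence $\lambda_2(\Lambda_h)\ll p^{1-1/\phi(d)}$.

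Feeding this separation of minima into the lattice sum, the dominant term already satisfies $|mn|\gg p^{1/\phi(d)}$ and contributes $\ll p^{1-1/\phi(d)}$; applied to the extreme rows $|n|=1$ and $|m|=1$ the same estimate gives in particular $\min(h,p-h)\gg p^{1/\phi(d)}$, so those rows are controlled as well. That the exponent cannot be improved is exactly what \eqref{s(a,f)} records: for $d$ prime and $p=(a^d-1)/(a-1)$ one has $s(a,p)\asymp p^{1-1/(d-1)}=p^{1-1/\phi(d)}$, so any proof is forced to be sharp up to the constant $\tfrac1{12}$.

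I expect the genuine obstacle to lie in the last step: bounding the \emph{entire} lattice sum — equivalently the \emph{sum}, not the maximum, of the partial quotients of $h/p$ — by $O(p^{1-1/\phi(d)})$. Each individual partial quotient is $\ll p^{1-1/\phi(d)}$, but the continued fraction has up to $\asymp\log p$ terms, so a term-by-term estimate only reproduces the logarithmic loss of Theorem \ref{indivbound}; the conjecture asserts that the large partial quotients cannot accumulate, which seems to need control of the full reduced basis of $\Lambda_h$ and not merely of $\lambda_1$. A secondary difficulty is uniformity in $d$: the implied constant in $|F(m,n)|\ll\max(|m|,|n|)^{\phi(d)}$ is the sum of the absolute values of the coefficients of $\Phi_d$, and one must check that this does not spoil the absolute constant $C$ demanded by the statement.
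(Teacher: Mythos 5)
You are attempting a statement that the paper itself does not prove: Conjecture \ref{conjDedekind} is offered only with numerical evidence (the check $Q(h,p)\leq Q(2,2^7-1)=0.08903\cdots$ for $p\leq 10^6$) and with the sharpness examples \eqref{s(a,f)} and Remark \ref{dedekindtwosizes}. So there is no paper proof to compare against, and the only question is whether your argument closes the conjecture. It does not, and your own closing paragraph diagnoses why; let me separate what is solid from what is missing. The arithmetic core is correct and worthwhile: for $(m,n)\in\Lambda_h$ with $mn\neq 0$ and $p\nmid n$, the residue $-m/n$ has exact order $d$, hence is a root of $\Phi_d$ modulo $p$, so $p$ divides the nonzero integer $F(m,n)=n^{\phi(d)}\Phi_d(-m/n)$; since the coefficient sum of $\Phi_d$ is at most $2^{\phi(d)}$ (Mahler measure $1$), this gives $\max(|m|,|n|)\geq\tfrac12\,p^{1/\phi(d)}$ with an \emph{absolute} constant --- your ``secondary difficulty'' about uniformity in $d$ evaporates when you take $\phi(d)$-th roots, and the range where $p^{1/\phi(d)}$ is bounded is covered by the trivial bound \eqref{boundscd}. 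Pushed through the continued-fraction formula, this caps every partial quotient of $h/p$ by $O(p^{1-1/\phi(d)})$ and yields, rigorously, $|s(h,p)|\ll p^{1-1/\phi(d)}\log p$. That is an elementary argument which even improves the $(\log p)^2$ of Theorem \ref{indivbound} by one logarithm, and it is a genuinely different mechanism from the paper's (discrepancy and Erd\H{o}s--Tur\'an via Lemma \ref{discrepancy}).

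But the conjecture asserts the bound with \emph{no} logarithm, and that is precisely where the proposal stops. Your lattice input consists of the two quantities $\lambda_1(\Lambda_h)\gg p^{1/\phi(d)}$ and $\lambda_2(\Lambda_h)\ll p^{1-1/\phi(d)}$, whereas the quantity to be bounded --- the (alternating) sum of all partial quotients, equivalently the full lattice sum $p\sum 1/|mn|$ --- is governed by the entire chain of best-approximation vectors $(q_i,\,q_ih-p_ip)$, of which there may be $\asymp\log p$. Nothing in the order-$d$ hypothesis, as you have used it, forbids $\asymp\log p$ of the partial quotients from simultaneously having size $\asymp p^{1-1/\phi(d)}$: each such configuration is compatible with your constraint $\max(q_i,|q_ih-p_ip|)\gg p^{1/\phi(d)}$ holding for every $i$. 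Ruling out this accumulation is not a ``last step'' of the proof but the entire content of the conjecture beyond Theorem \ref{indivbound}; indeed the paper's extremal examples $p=(a^d-1)/(a-1)$, $h=a$ show that the truth of the matter is one giant partial quotient dominating all others, and no one currently knows how to prove that this is the only possible shape. So the proposal is honest, its completed portion is correct and of independent interest, but as a proof of Conjecture \ref{conjDedekind} it has a genuine, unfilled gap.
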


Indeed, for $p\leq 10^6$ we checked on a desk computer that 
any odd $d>1$ dividing $p-1$ 
and any $h$ of order $d$ in the multiplicative group $({\mathbb Z}/p{\mathbb Z})^*$ we have
$$Q(h,p)
:=\frac{\vert s(h,p)\vert}{p^{1-\frac{1}{\phi(d)}}}
\leq Q(2,2^7-1)
=0.08903\cdots$$ 
The estimate \eqref{conjecture} would allow to slightly extend the range of validity of Theorem \ref{asympd0} 
to $d \leq (1-\varepsilon)\frac{\log p}{\log \log p}$. 
Moreover the choice $a=2$ in (\ref{s(a,f)}) for which $s(2,f)$ is asymptotic to $\frac{1}{24}f$ with $f=2^d-1$ 
shows that $s(h,p) =o(p)$ cannot hold true in the range $d \asymp \log p$.
Notice that we cannot expect a better bound than (\ref{conjecture}), by (\ref{s(a,f)}).
Finally, the restriction that $p$ be prime in (\ref{conjecture}) is paramount by Remark \ref{dedekindtwosizes} 
where $s(a,f) \sim f^{2/3}/12$ for $a$ of order $3$ in $({\mathbb Z}/(a^3-1){\mathbb Z})^*$.

\section*{Acknowledgements}
This work was supported by the Ministero della Istruzione e della Ricerca Young Researchers Program Rita Levi Montalcini (M.M.).

{\small
\bibliography{central}

}

\end{document}